\documentclass[a4paper,fleqn,longmktitle]{cas-sc}

\usepackage[numbers]{natbib}

\usepackage{graphicx}%
\usepackage{multirow}%
\usepackage{amsmath,amssymb,amsfonts}%
\usepackage{amsthm}%
\usepackage{mathrsfs}%
\usepackage[title]{appendix}%
\usepackage{xcolor}%
\usepackage{textcomp}%
\usepackage{manyfoot}%
\usepackage{cleveref}
\usepackage{booktabs}%
\usepackage{algorithm}%
\usepackage{algorithmicx}%
\usepackage{algpseudocode}%
\usepackage{listings}%
\usepackage{enumitem}
\usepackage{float}

\theoremstyle{plain}
\newtheorem{theorem}{Theorem}[section]
\newtheorem{lemma}[theorem]{Lemma}
\newtheorem{proposition}[theorem]{Proposition}
\newtheorem{corollary}[theorem]{Corollary}
\newtheorem{assumption}[theorem]{Assumption}

\theoremstyle{definition}
\newtheorem{definition}[theorem]{Definition}

\theoremstyle{remark}
\newtheorem{remark}[theorem]{Remark}

\numberwithin{equation}{section}

\usepackage{enumitem}

\def\tsc#1{\csdef{#1}{\textsc{\lowercase{#1}}\xspace}}
\tsc{WGM}
\tsc{QE}

\ExplSyntaxOn
\cs_if_exist:NF \vbox_unpack_clear:N
  {
    \cs_new_protected:Npn \vbox_unpack_clear:N #1
      {
        \vbox_unpack:N #1
        \box_clear:N #1
      }
  }
\ExplSyntaxOff

 

\ExplSyntaxOn
\cs_gset:Npn \__first_footerline:
  {
    \group_begin:
    \small \sffamily
    \hfill \thepage \hfill 
    \group_end:
  }
\ExplSyntaxOff

\usepackage{fancyhdr}
\begin{document}

\shorttitle{Optimal Harvesting with Environmental Feedback}

\shortauthors{L. S. Wang and J. Yu}  

\title [mode = title]{Optimal Harvesting of Size-Structured Populations with Environmental Feedback and Fixed Recruitment Flux}

\author[1]{Louis Shuo Wang}
\cormark[1]  
\fnmark[1]  
\ead{wang.s41@northeastern.edu}

\affiliation[1]{organization={Department of Mathematics, Northeastern University},
            city={Boston},
            postcode={02115},
            state={MA},
            country={United States}}
            
\author[2]{Jiguang Yu}
\fnmark[1] 
\ead{jyu678@bu.edu}

\affiliation[2]{organization={College of Engineering, Boston University},
            city={Boston},
            postcode={02215},
            state={MA},
            country={United States}}
            
\cortext[1]{Corresponding author}

\fntext[1]{These authors contributed equally to this work as co-first authors.}

\begin{abstract}
We study a nonlinear size-structured transport model with distributed harvesting and prescribed recruitment flux, where environmental feedback is determined by a scalar population functional. After establishing global well-posedness on $L^1$, we reduce stationary equilibria to a scalar closure equation. This reduction reveals that loss of equilibrium uniqueness occurs through a generic fold, mathematically characterizing critical population transitions. On uniformly nonresonant equilibrium branches, we prove the existence of optimal stationary harvesting policies via the direct method of the calculus of variations. We then derive a boundary-corrected adjoint equation and establish an identity equating equilibrium sensitivity with the adjoint loop gain. This relation yields explicit criteria for the persistence and creation of optimal harvesting thresholds. Collectively, these results provide a unified analytical framework connecting environmental feedback, equilibrium structure, and optimal harvesting.
\end{abstract}

\begin{keywords}
stationary equilibrium;
adjoint equation;
fold bifurcation;
bang--bang control

\MSC[2020] 92D25; 35L04; 35Q92; 49K20; 49J20
\end{keywords}

\maketitle

\section{Introduction}
\label{sec:introduction}

Renewable-resource management often depends not only on total population
abundance, but also on the distribution of individuals across biologically and
economically relevant traits. In forestry, stem diameter determines both
market value and future stand structure; in fisheries, body length affects
reproductive output, legal harvestability, and yield; and in cell populations,
size or biomass may correlate with division, death, and treatment response.
These settings naturally lead to size-structured harvesting models in which
the control is size selective, while the population itself modifies growth and
mortality through crowding, resource depletion, or aggregate biomass.

Let
\[
    \Omega=[l_0,l_m]\Subset(0,\infty),
    \qquad
    0<l_0<l_m<\infty,
    \qquad
    x(t,\cdot)\in L^1_+(\Omega),
\]
and let the scalar output
\[
    E(t)=\mathcal E(x(t))
    :=
    \langle\chi,x(t)\rangle
    =
    \int_\Omega \chi(l)x(t,l)\,dl
\]
feed back into the vital rates. We study the closed-loop size-structured
transport system
\begin{equation}
    \label{eq:intro_state}
    \partial_t x(t,l)
    +
    \partial_l\!\big(g(E(t),l)x(t,l)\big)
    =
    -\big(\mu(E(t),l)+u(t,l)\big)x(t,l),
    \qquad
    (t,l)\in(0,\infty)\times\Omega,
\end{equation}
with prescribed inflow flux
\begin{equation}
    \label{eq:intro_flux_bc}
    g(E(t),l_0)x(t,l_0)=p(t),
    \qquad
    t>0.
\end{equation}
Here
\[
    g(E,l)>0,
    \qquad
    \mu(E,l)\ge0,
    \qquad
    0\le u(t,l)\le u_{\max},
    \qquad
    p(t)>0,
    \qquad
    \chi\ge0,\quad \chi\not\equiv0.
\]
Thus the state enters the dynamics only through the scalar observation
\[
    x
    \longmapsto
    E=\langle\chi,x\rangle
    \longmapsto
    \big(g(E,\cdot),\mu(E,\cdot)\big),
\]
and the controlled equation may be viewed as a positive infinite-dimensional
control system with scalar nonlinear feedback.

Physiologically structured population models have become a standard framework
for describing populations whose dynamics depend on individual traits such as
age or size. Beginning with the pioneering works of Metz and Diekmann
\cite{metz2014dynamics} and the classical nonlinear age-dependent theory of
Gurtin and MacCamy \cite{gurtin1974non}, the mathematical theory has developed
into a mature subject through the monographs
\cite{webb1985theory,iannelli1995mathematical,wang2025analysis,perthame2007transport,magal2018theory,cushing1998introduction}.
These models have found broad applications in cellular biology
\cite{bonnet2020multiscale,yu2026microscopic,doumic2011structured,liang2025global,grabosch2020production,wang2026damage,wang2025analysis1},
ecological modelling \cite{thieme1988well,tian2018global,liu2025bidirectional},
and industrial processes
\cite{coron2012controllability,li2022controllability,shang2011analysis,wang2026algebraic}.

Among these models, nonlinear size-structured transport equations with
environment-dependent growth have received considerable attention
\cite{calsina1995model,kato2004general,yu2026rigorous,jie2026optimal,fotso2023optimal}.
Existing studies have established well-posedness
\cite{kato2004general,ackleh2016general,ainseba2022population,yu2026pattern,ainseba2020adaptative,clement2024well},
measure-valued formulations
\cite{ackleh2005measure,franco2022integral,franco2023modelling,wang2026breakdown},
and stability properties
\cite{farkas2007stability}.
In particular, the reduction of stationary equilibria to finite-dimensional
closure equations has been systematically investigated by
Diekmann, Gyllenberg, and Metz \cite{diekmann2003steady}.
Our model differs from these formulations in that the inflow boundary condition
prescribes a recruitment flux rather than a boundary density, which generates a
nontrivial boundary contribution in the adjoint system.
The associated trace and inflow boundary issues have recently been studied by
Scott and Pollock \cite{scott2022transport,yu2026beyond}.
Since our analysis is carried out in a bounded-variation framework,
the \(BV\) techniques employed throughout the paper follow the standard theory
developed in
\cite{ambrosio2000functions}.

Optimal control of structured population models has also been extensively
studied.
Pontryagin-type maximum principles were initiated by
Brokate \cite{brokate1985pontryagin,wang2026elliptic},
while infinite-dimensional optimal control theory is systematically developed
in
\cite{li2012optimal,anita2013analysis,wang2025multi}.
For structured harvesting problems, optimality conditions and bang--bang
policies have been investigated for both age- and size-structured systems
\cite{feichtinger2003optimality,park1998optimal,barbu1999optimal,
kato2008maximum,liu2025optimal,gao2022rolling}.
In particular, Kato established Pontryagin-type maximum principles for
size-structured harvesting problems \cite{kato2008maximum}, while
Hritonenko \emph{et al.}
derived existence, uniqueness, and bang--bang harvesting regimes for exploited
stationary populations
\cite{hritonenko2008maximum,hritonenko2009bang}.
For infinite-horizon formulations, rigorous treatments of adjoint equations and
transversality conditions were developed by
Aseev and Veliov \cite{aseev2012maximum},
with subsequent extensions to age-structured population models by
Skritek and Veliov \cite{skritek2015infinite}.

A central feature of \eqref{eq:intro_state}--\eqref{eq:intro_flux_bc} is that
the lower boundary condition prescribes a physical flux, not a density. Namely,
$\displaystyle x(t,l_0)=\frac{p(t)}{g(E(t),l_0)}$.
Consequently,
\[
    \delta_E\big(g(E,l_0)x(E,l_0)\big)=0,
    \qquad
    \delta_E x(E,l_0)
    =
    -\frac{p\,\partial_Eg(E,l_0)}{g(E,l_0)^2}.
\]
This boundary sensitivity is invisible if one treats \(x(t,l_0)\) as prescribed
data. It is precisely this fixed-flux structure that produces the boundary
correction in the adjoint equation. 
Transport equations with nontrivial inflow
data require careful trace and interface handling.

For stationary controls,
the equilibrium problem reduces to the scalar closure equation
\[
E=\Phi_u(E).
\]
The corresponding closure derivative determines whether stationary
equilibria persist under perturbations and whether fold bifurcations may
occur.

We next consider the stationary harvesting problem over a selected stationary
sheet
\[
u\longmapsto E_u^*,
\qquad
E_u^*=\Phi_u(E_u^*),
\]
where the harvesting objective is
\[
\mathcal Y(u)
=
\int_\Omega \pi(l)u(l)x_u^*(l)\,dl,
\qquad
x_u^*:=x_{E_u^*,u},
\]
with admissible controls belonging to a compact \(BV\)-class.
Under a uniform nonresonance condition, the stationary sheet depends
continuously on the control, which enables the application of the direct
method of the calculus of variations and yields the existence of an optimal
stationary harvesting policy.

The first-order optimality system involves a boundary-corrected rank-one
adjoint equation induced by the fixed-flux boundary condition. Its solution
admits the decomposition
\[
S
=
S_{\rm red}
-
\Gamma_0\psi_0,
\qquad
\Gamma_0
=
\frac{A_0}{1-B_0},
\]
where \(S\) denotes the switching function and
\(S_{\rm red}\) is the reduced switching function obtained from the uncoupled
adjoint problem. The scalar coefficient \(\Gamma_0\) measures the correction
generated by the environmental feedback through the rank-one coupling.

A central result of the paper establishes that the same scalar quantity
governing the closure equation also determines the adjoint correction.
Consequently, the scalar closure relation provides a unified description of
stationary equilibrium continuation, adjoint solvability, and the qualitative
structure of bang--bang harvesting policies. In particular, the above
decomposition yields explicit criteria for threshold persistence and for the
creation of new harvesting windows as the closure margin approaches zero.

The main contributions of this paper are fourfold. First, we establish global
well-posedness and prove that the controlled transport equation defines a
forward-complete positive control system on \(L^1_+(\Omega)\). Second, we
reduce stationary equilibria to a scalar closure equation and show that its
derivative characterizes equilibrium persistence and fold resonance. Third,
under a uniform nonresonance condition, we prove the existence of an optimal
stationary harvesting policy over a compact \(BV\)-admissible class. Finally,
we derive a boundary-corrected rank-one adjoint equation and establish the
fundamental connection between closure resonance, adjoint resonance, and
harvesting threshold transitions.

The remainder of the paper is organized as follows. Section~2 introduces the
mathematical model together with the assumptions and stationary formulation.
Section~3 presents the main theoretical results. Section~4 contains the proofs
and technical arguments. Finally, Section~5 concludes the paper with a
discussion of the results and possible future directions.

\section{Methods (Model)}
\label{sec:model}
\subsection{Controlled size-structured dynamics}
\label{subsec:model_dynamics}
Let \(\Omega:=[l_0,l_m]\Subset(0,\infty)\), \(0<l_0<l_m<\infty\),
\(X:=L^1_+(\Omega)\). For \(x(t,\cdot)\in X\), \(\chi\in L^\infty_+(\Omega)\),
\(\chi\not\equiv0\), define the scalar environmental output
\[
    E(t):=\mathcal E(x(t))
    :=
    \langle\chi,x(t)\rangle
    =
    \int_{l_0}^{l_m}\chi(l)x(t,l)\,dl,
    \qquad
    0\le E(t)\le \|\chi\|_{L^\infty}\|x(t)\|_{L^1}.
\]
The controlled dynamics are
\begin{equation}
    \label{eq:model_state}
    \partial_t x(t,l)
    +
    \partial_l\!\big(g(E(t),l)x(t,l)\big)
    =
    -\big(\mu(E(t),l)+u(t,l)\big)x(t,l),
    \quad
    (t,l)\in(0,\infty)\times\Omega,
\end{equation}
with inflow flux and initial condition
\begin{equation}
    \label{eq:model_flux_bc}
    g(E(t),l_0)x(t,l_0)=p(t),\quad t>0,
    \qquad
    x(0,l)=x_0(l),\quad l\in\Omega.
\end{equation}
The boundary condition is flux-type: \(x(t,l_0)=p(t)/g(E(t),l_0)\),
\(\delta_E(g(E,l_0)x(E,l_0))=0\), and formally
\(\delta_E x(E,l_0)=-p(t)\partial_Eg(E,l_0)/g(E,l_0)^2\).
The nonconservative form is
\[
    \partial_t x
    +
    g(E(t),l)\partial_lx
    =
    -\big(\partial_lg(E(t),l)+\mu(E(t),l)+u(t,l)\big)x,
\]
so that along a time characteristic
\(\frac{d}{ds}L(s;t,l)=g(E(s),L(s;t,l))\), \(L(t;t,l)=l\), the density obeys
\(\frac{d}{ds}x(s,L)=-(\partial_lg+\mu+u)(E(s),L)x(s,L)\). Thus the
time-characteristic survival rate is \(\partial_lg+\mu+u\), whereas the
stationary size-hazard is \((\mu+u)/g\).

As clearly visualized in \Cref{fig:boundary_flux}, the delicate interface created by the prescribed flux boundary condition $g(E(t), l_0)x(t, l_0) = p(t)$ geometrically separates the characteristic branches. This separation fundamentally supports the frozen-flow stability estimates and clarifies the $BV$ composition estimates.

\subsection{Assumptions and input classes}
\label{subsec:model_assumptions}
\begin{assumption}[Coefficients]
\label{ass:model_coefficients}
There exists \(g_m>0\) with \(g(E,l)\ge g_m\) on \(\mathbb R_+\times\Omega\).
For each \(E_M>0\) there is \(C_M>0\) with
\[
    |g|+|\mu|+|\partial_Eg|+|\partial_lg|+|\partial_{ll}g|
    +|\partial_E\mu|+|\partial_l\mu|
    \le C_M
    \quad
    \text{a.e. on }[0,E_M]\times\Omega,
\]
so in particular \(g(E,\cdot)\in W^{2,\infty}(\Omega)\) uniformly on bounded
\(E\)-intervals. For each \(E_M>0\) there is \(L_M>0\) with
\begin{equation}
    \label{eq:model_E_lip}
    \|g(E_1,\cdot)-g(E_2,\cdot)\|_{W^{2,\infty}(\Omega)}
    +
    \|\mu(E_1,\cdot)-\mu(E_2,\cdot)\|_{W^{1,\infty}(\Omega)}
    \le
    L_M|E_1-E_2|
\end{equation}
for all \(E_1,E_2\in[0,E_M]\), and \(E\mapsto\partial_Eg(E,\cdot)\),
\(E\mapsto\partial_E\mu(E,\cdot)\) are continuous from \([0,E_M]\) into
\(L^\infty(\Omega)\).
\end{assumption}
\begin{assumption}[Environmental weight]
\label{ass:model_chi}
\(\chi\in L^\infty_+(\Omega)\), \(\chi\not\equiv0\). For stationary
differentiability and adjoint identities, \(\chi\in W^{1,\infty}(\Omega)\).
\end{assumption}
For dynamic inputs, fix \(u_{\max}>0\), \(M_u^{\rm dyn}>0\),
\(0<p_{\min}\le p_{\max}<\infty\), and set
\[
    \mathcal U_{\rm dyn}
    :=
    \Big\{
    u\in L^\infty_{\rm loc}(\mathbb R_+;BV(\Omega)):
    0\le u\le u_{\max}\ \text{a.e.},\
    \operatorname*{ess\,sup}_{t\ge0}\operatorname{TV}(u(t,\cdot))
    \le M_u^{\rm dyn}
    \Big\},
\]
\[
    \mathcal P
    :=
    \Big\{
    p\in L^\infty_{\rm loc}(\mathbb R_+;[p_{\min},p_{\max}])
    \cap BV_{\rm loc}(\mathbb R_+):
    \operatorname{TV}_{[0,T]}(p)\le M_p(T)\ \forall T>0
    \Big\},
\]
and \(\mathcal W:=\mathcal U_{\rm dyn}\times\mathcal P\),
\(w=(u,p)\in\mathcal W\). The initial state satisfies
\(x_0\in BV_+(\Omega):=\{x\in BV(\Omega):x\ge0\ \text{a.e.}\}\). If traces are
used classically, impose
\begin{equation}
    \label{eq:model_corner_compatibility}
    g(E_0,l_0)x_0(l_0)=p(0),
    \qquad
    E_0:=\int_\Omega\chi(l)x_0(l)\,dl;
\end{equation}
for the weak theory \eqref{eq:model_corner_compatibility} is not imposed in the
state space. The state metric is
\(d_X(x,\widetilde x):=\|x-\widetilde x\|_{L^1(\Omega)}\), and
\(|\mathcal E(x)-\mathcal E(\widetilde x)|\le\|\chi\|_{L^\infty}
\|x-\widetilde x\|_{L^1}\).
\subsection{Weak solutions and control-system notation}
\label{subsec:weak_solutions}
Let \(T>0\), \(x_0\in X\), \(w=(u,p)\in\mathcal W\). A function
\(x\in C^0([0,T];L^1_+(\Omega))\) is a weak solution of
\eqref{eq:model_state}--\eqref{eq:model_flux_bc} on \([0,T]\) if
\(E(t)=\langle\chi,x(t)\rangle\), \(x(0)=x_0\) in \(L^1\), and for all
\(\varphi\in C^1([0,T]\times\Omega)\) with \(\varphi(t,l_m)=0\) and all
\(t\in[0,T]\),
\begin{equation}
    \label{eq:model_weak_solution}
    \begin{aligned}
    &\int_\Omega x(t,l)\varphi(t,l)\,dl
    -
    \int_\Omega x_0(l)\varphi(0,l)\,dl
    -
    \int_0^t p(s)\varphi(s,l_0)\,ds
    \\
    &\qquad =
    \int_0^t\!\!\int_\Omega
    x(s,l)
    \big[
        \partial_s\varphi
        +
        g(E(s),l)\partial_l\varphi
        -
        (\mu(E(s),l)+u(s,l))\varphi
    \big]dl\,ds .
    \end{aligned}
\end{equation}
A global weak solution is \(x\in C^0(\mathbb R_+;L^1_+(\Omega))\) whose
restriction to each finite interval is a weak solution. For \(w\in\mathcal W\)
write \(\phi(t,x_0;w):=x(t,\cdot;x_0,w)\), and for \(\tau\ge0\) the shifted
input \(\delta_\tau w:=(\delta_\tau u,\delta_\tau p)\),
\((\delta_\tau u)(s,l):=u(\tau+s,l)\), \((\delta_\tau p)(s):=p(\tau+s)\), so
\(\delta_\tau\mathcal W\subset\mathcal W\) and
\(\phi:\mathbb R_+\times X\times\mathcal W\to X\). The control-system identities
proved later are \(\phi(0,x_0;w)=x_0\), continuity of
\(t\mapsto\phi(t,x_0;w)\), the cocycle
\(\phi(t+\tau,x_0;w)=\phi(t,\phi(\tau,x_0;w);\delta_\tau w)\), and causality
\(w|_{[0,t]}=\widetilde w|_{[0,t]}\Rightarrow\phi(t,x_0;w)=\phi(t,x_0;\widetilde w)\).
\subsection{Stationary profiles and the closure map}
\label{subsec:model_stationary_closure}
For stationary inputs \(u(t,l)=u(l)\), \(p(t)=p>0\), \(0\le u\le u_{\max}\), and
frozen \(E\ge0\), set \(g_E(l):=g(E,l)\), \(\mu_E(l):=\mu(E,l)\),
\(h_E(l):=(\mu_E(l)+u(l))/g_E(l)\). The frozen stationary equation
\[
    \frac{d}{dl}\big(g_E(l)x_E(l)\big)
    =
    -\big(\mu_E(l)+u(l)\big)x_E(l),
    \qquad
    g_E(l_0)x_E(l_0)=p
\]
gives, for the stationary flux \(y_E:=g_Ex_E\), \(y_E'=-h_Ey_E\),
\(y_E(l_0)=p\), hence
\begin{equation}
    \label{eq:model_xE}
    y_E(l)=p\,e^{-\int_{l_0}^l h_E},
    \qquad
    x_E(l)
    =
    \frac{p}{g(E,l)}
    \exp\!\left[
        -\int_{l_0}^{l}
        \frac{\mu(E,\xi)+u(\xi)}{g(E,\xi)}\,d\xi
    \right],
\end{equation}
with \(0<x_E\le p/g_m\) a.e. and \(g_Ex_E\in W^{1,\infty}(\Omega)\). The scalar
closure map is \(\Phi_u(E):=\int_\Omega\chi(l)x_E(l)\,dl\), the closed-loop
equation \(E=\Phi_u(E)\), \(H_u(E):=E-\Phi_u(E)\). A stationary equilibrium is
\((x_u^*,E_u^*)=(x_{E_u^*,u},E_u^*)\), \(H_u(E_u^*)=0\). Since \(\chi\not\equiv0\)
and \(x_E>0\), one has \(\Phi_u(0)>0\), so \(H_u(0)=-\Phi_u(0)<0\) while
\(H_u(E)\to+\infty\); thus a root \(E_u^*\ge0\) exists. With
\(\alpha_u(E,l):=-\partial_E\log g(E,l)\) and
\(b_u(E,l):=\partial_E((\mu+u)/g)
=\partial_E\mu/g-(\mu+u)\partial_Eg/g^2\),
\[
    \partial_E x_E(l)
    =
    x_E(l)
    \left[
        \alpha_u(E,l)
        -
        \int_{l_0}^{l}b_u(E,\xi)\,d\xi
    \right],
\]
\[
    \Phi_u'(E)
    =
    \int_\Omega\chi(l)\partial_E x_E(l)\,dl
    =
    \mathcal R_u(E)-\mathcal C_u(E),
\]
with \(\mathcal R_u(E):=\int_\Omega\chi x_E\alpha_u\,dl\) and
\(\mathcal C_u(E):=\int_{l_0}^{l_m}b_u(E,\xi)
(\int_{\xi}^{l_m}\chi x_E\,dl)\,d\xi\). The closure margin is
\(H_u'(E)=1-\Phi_u'(E)\).
\subsection{Stationary optimization problem}
\label{subsec:model_optimization}
Fix \(M_u>0\), \(u_{\max}>0\), and set
\[
    \mathcal U_{\rm stat}
    :=
    \{u\in BV(\Omega):0\le u\le u_{\max}\ \text{a.e.},\
    \operatorname{TV}(u)\le M_u\},
\]
sequentially compact in \(L^1(\Omega)\).
\begin{definition}[Stationary sheet]
\label{def:model_stationary_sheet}
A stationary sheet over \(\mathcal U_{\rm stat}\) is a map
\(u\mapsto E_u^*\in\mathbb R_+\) with \(E_u^*=\Phi_u(E_u^*)\); the associated
profile is \(x_u^*(l):=x_{E_u^*,u}(l)\).
\end{definition}
\begin{assumption}[Uniformly nonresonant sheet]
\label{ass:model_nonresonant_sheet}
There exist \(E_{\max},\gamma_{\rm cl},\delta_{\rm cl}>0\) and
\(s_{\rm cl}\in\{+1,-1\}\), and a stationary sheet \(u\mapsto E_u^*\), such that
\[
    0\le E_u^*\le E_{\max},\qquad u\in\mathcal U_{\rm stat},
\]
\begin{equation}
    \label{eq:model_sheet_margin}
    s_{\rm cl}\big(1-\partial_E\Phi_u(E)\big)
    \ge
    \gamma_{\rm cl},
    \qquad
    E\in(E_u^*-\delta_{\rm cl},E_u^*+\delta_{\rm cl})\cap\mathbb R_+,
\end{equation}
and \(E_u^*\) is the only zero of \(H_u\) in
\((E_u^*-\delta_{\rm cl},E_u^*+\delta_{\rm cl})\cap\mathbb R_+\).
\end{assumption}
Let \(\pi\in L^\infty_+(\Omega)\) be the unit harvesting value; the stationary
yield is \(\mathcal Y(u):=\int_\Omega\pi u x_u^*\,dl\), and the sheet-restricted
problem is
\[
    \max_{u\in\mathcal U_{\rm stat}}
    \mathcal Y(u)
    =
    \max_{u\in\mathcal U_{\rm stat}}
    \int_\Omega \pi(l)u(l)x_{E_u^*,u}(l)\,dl
\]
subject to \(E_u^*=\Phi_u(E_u^*)\), \(x_u^*=x_{E_u^*,u}\),
\(0\le u\le u_{\max}\), \(\operatorname{TV}(u)\le M_u\). For the first-order
system, \(g^*(l):=g(E_u^*,l)\), \(\mu^*(l):=\mu(E_u^*,l)\),
\(x^*(l):=x_u^*(l)\), \(a(l):=\partial_Eg(E_u^*,l)x^*(l)\),
\(m(l):=\partial_E\mu(E_u^*,l)x^*(l)\), the zero-discount adjoint operator
\[
    \mathcal L_0^*\lambda
    :=
    -g^*(l)\lambda'(l)
    +
    \big(\mu^*(l)+u(l)\big)\lambda(l),
    \qquad
    \lambda(l_m)=0,
\]
and the co-load
\[
    \langle\sigma,\lambda\rangle
    :=
    \int_\Omega
    \big(a(l)\lambda'(l)-m(l)\lambda(l)\big)\,dl .
\]
With \(R_0:=(\mathcal L_0^*)^{-1}\), \(\psi_0:=R_0\chi\), \(q_{\rm red}:=\pi u\),
\(\lambda_{\rm red}:=R_0q_{\rm red}\),
\(A_0:=\langle\sigma,\lambda_{\rm red}\rangle\),
\(B_0:=\langle\sigma,\psi_0\rangle\), \(\Gamma_0:=A_0/(1-B_0)\), the switching
function is \(S:=\pi-\lambda=S_{\rm red}-\Gamma_0\psi_0\),
\(S_{\rm red}:=\pi-\lambda_{\rm red}\).

\section{Results}
\label{sec:main_results}
\subsection{Well-posedness and positive control-system property}
\label{subsec:main_wellposedness}
Write \(E_x(t):=\langle\chi,x(t)\rangle\), \(\phi(t,x_0;w):=x(t,\cdot;x_0,w)\).
\begin{theorem}[Global well-posedness]
\label{thm:main_global_wellposedness}
Under \Cref{ass:model_coefficients,ass:model_chi}, for every
\(w=(u,p)\in\mathcal W\) and \(x_0\in X\) there is a unique
\(x(\cdot;x_0,w)\in C^0(\mathbb R_+;L^1_+(\Omega))\) satisfying
\eqref{eq:model_weak_solution}, with \(x\ge0\),
\(E=\langle\chi,x\rangle\ge0\), and for every \(T>0\),
\[
    \|x(t)\|_{L^1}
    \le
    \|x_0\|_{L^1}+p_{\max}T,
    \qquad
    0\le E(t)
    \le
    \|\chi\|_{L^\infty}\big(\|x_0\|_{L^1}+p_{\max}T\big),
\]
for \(0\le t\le T\). If \(\|x_0\|_{L^1},\|\widetilde x_0\|_{L^1}\le R\) and the
same \(w\) drives both solutions, then
\begin{equation}
    \label{eq:main_continuous_dependence}
    \|\phi(t,x_0;w)-\phi(t,\widetilde x_0;w)\|_{L^1}
    \le
    C_T(R)\|x_0-\widetilde x_0\|_{L^1},
    \qquad
    0\le t\le T .
\end{equation}
\end{theorem}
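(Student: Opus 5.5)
The plan is a fixed-point argument on the scalar environmental trajectory \(E\), not on the full state. Fix \(T>0\) and set \(E_M:=\|\chi\|_{L^\infty}(\|x_0\|_{L^1}+p_{\max}T)\). Consider the closed convex set \(\mathcal K_T:=\{E\in C^0([0,T]):0\le E\le E_M\}\).

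\textbf{Frozen linear problem.} For \(E\in\mathcal K_T\), I would solve the linear nonautonomous transport problem with coefficients \(g(E(t),l)\), \(\mu(E(t),l)+u(t,l)\) and flux data \(p\) explicitly by characteristics. By \Cref{ass:model_coefficients}, \(g\ge g_m\) and \(\partial_l g\) is bounded, so the characteristic flow \(L(s;t,l)\) is a bi-Lipschitz family. Points with \(L(0;t,l)\ge l_0\) carry \(x_0\) transported with the factor \(\exp(-\int(\partial_lg+\mu+u))\). Points whose characteristic exits through \(l_0\) at time \(\tau(t,l)\) carry \(p(\tau)/g(E(\tau),l_0)\) with the same factor. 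This gives a unique weak solution \(x^E\in C^0([0,T];L^1_+)\) of \eqref{eq:model_weak_solution}, and \(x^E\ge0\) is immediate. Uniqueness in the weak class follows by testing with \(\varphi\) solving the backward dual transport equation \(\partial_s\varphi+g\partial_l\varphi-(\mu+u)\varphi=0\) with \(\varphi(t)=\) arbitrary smooth data and \(\varphi(\cdot,l_m)=0\), after mollifying the \(BV\)-in-\(l\) coefficient \(u\). Choosing \(\varphi\equiv1\) (allowed after approximating near \(l_m\), since the outflow term only removes mass) gives
\[
\|x^E(t)\|_{L^1}\le\|x_0\|_{L^1}+\int_0^tp\le\|x_0\|_{L^1}+p_{\max}T .
\]
Hence the map \(\Theta(E)(t):=\langle\chi,x^E(t)\rangle\) sends \(\mathcal K_T\) into itself.

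\textbf{Contraction.} This is the main step and the main obstacle. For \(E_1,E_2\in\mathcal K_T\), I would estimate
\[
\|x^{E_1}(t)-x^{E_2}(t)\|_{L^1}\le C\int_0^t|E_1-E_2|\,ds .
\]
Writing \(z:=x^{E_1}-x^{E_2}\), \(z\) solves the \(E_1\)-transport equation with source \(-\partial_l((g_1-g_2)x^{E_2})-(\mu_1-\mu_2)x^{E_2}\) and boundary flux defect \(0\). The term \(\partial_l((g_1-g_2)x^{E_2})\) requires a bound on \(\operatorname{TV}(x^{E_2}(t))\), which is why \(BV\) data, \(\operatorname{TV}(u)\le M_u^{\rm dyn}\) and \(\operatorname{TV}(p)\le M_p(T)\) enter. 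I would first prove, for \(x_0\in BV_+\), a uniform \(BV\) bound on \(x^{E}(t)\) along characteristics. The boundary trace \(p(\tau)/g(E(\tau),l_0)\) has \(\tau\)-variation controlled by \(M_p(T)\) plus \(L_M\) times the Lipschitz variation of \(E\), and one can restrict \(\mathcal K_T\) to Lipschitz \(E\) using \(|\dot E|\lesssim\|\chi\|_{W^{1,\infty}}\) from the weak form. A possible jump at the corner, if \eqref{eq:model_corner_compatibility} fails, adds only a bounded amount to the variation. An \(L^1\) Kružkov-type/Duhamel estimate then gives the displayed inequality. With it, \(\Theta\) is a contraction on \(\mathcal K_T\) under a weighted norm \(\sup e^{-\beta t}|E(t)|\). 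Banach's theorem yields a unique fixed point and a weak solution on \([0,T]\). Since \(T\) is arbitrary and the solutions are unique, they patch to a global solution. For general \(x_0\in L^1_+\), I would approximate by \(BV_+\) data and pass to the limit using the stability estimate below.

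\textbf{Continuous dependence.} For two solutions with the same \(w\), I would use the \(L^1\)-contraction of the frozen linear flow (the semigroup is order preserving and mass nonincreasing apart from the source \(p\)), and write \(x-\widetilde x\) as the difference between the \(E\)-flow applied to \(x_0-\widetilde x_0\) and the effect of \(E\ne\widetilde E\). The same perturbation estimate gives
\[
\|x(t)-\widetilde x(t)\|_{L^1}\le e^{Ct}\|x_0-\widetilde x_0\|_{L^1}+C\int_0^t|E-\widetilde E|\,ds .
\]
Combining this with \(|E-\widetilde E|\le\|\chi\|_{L^\infty}\|x-\widetilde x\|_{L^1}\) and Gronwall yields \eqref{eq:main_continuous_dependence} with \(C_T(R)\) depending on \(E_M(R,T)\). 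If the \(BV\) bound is unavailable for rough \(x_0\), I would instead exploit the \(W^{2,\infty}\) regularity of \(g\). Moving \(\partial_l\) onto the test function in a duality argument, the backward dual solution is Lipschitz with bounds in \(C_M,L_M\). This avoids needing \(\operatorname{TV}(x)\) and gives the estimate directly for \(L^1\) data.
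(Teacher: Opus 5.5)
For $x_0\in BV_+(\Omega)$ your argument is sound and has the paper's architecture:
\begin{itemize}
\item a fixed point for the scalar path $E$ in a Lipschitz-in-time ball;
\item the frozen problem solved by characteristics;
\item a frozen-path bound $\|x^{E_1}(t)-x^{E_2}(t)\|_{L^1}\le C\int_0^t|E_1-E_2|\,ds$ whose constant depends on total variation;
\item Gronwall for the dependence estimate.
\end{itemize}
The differences are in execution.
\begin{itemize}
\item You prove the frozen-path bound in Eulerian form, by Duhamel for the $E_1$-transport driven by the measure source $-\partial_l((g_1-g_2)x^{E_2})-(\mu_1-\mu_2)x^{E_2}$ plus a propagated $\operatorname{TV}$ bound. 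The paper instead compares the two characteristic representations region by region, using flow and entrance-time stability, the $BV$ composition lemma, and a mismatch strip.
\item Your exponentially weighted norm gives the contraction on all of $[0,T]$ at once, where the paper contracts on a short interval and continues.
\item You supply the linear uniqueness (by duality) and the mass bound that the paper only asserts.
\item Your bound $|\dot E|\lesssim\|\chi\|_{W^{1,\infty}}$ uses regularity of $\chi$ that \Cref{ass:model_chi} imposes only for the stationary theory. The paper gets time-Lipschitz control of $E$ from $L^1$-Lipschitz continuity of $t\mapsto z_e(t)$ for $BV$ data, which needs only $\chi\in L^\infty$.
\end{itemize}

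\textbf{The gap} is your last step: the extension to $x_0\in L^1_+(\Omega)$ and the claim that $C_T(R)$ depends only on $E_M(R,T)$.

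Your perturbation constant is proportional to $\sup_s\operatorname{TV}(\widetilde x(s))$, hence to $\operatorname{TV}(\widetilde x_0)$. That quantity blows up along any $BV$ approximation of a non-$BV$ datum, so the approximation argument has no uniform Cauchy estimate.

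The duality fallback does not repair this. Pairing against Lipschitz backward solutions controls $x-\widetilde x$ only in a dual-Lipschitz norm. That is enough to bound $|E-\widetilde E|$ when $\chi\in W^{1,\infty}$, but not $\|x-\widetilde x\|_{L^1}$. For the $L^1$ norm the terminal datum must range over the unit ball of $L^\infty$; the backward solution is then not Lipschitz, and $\int(g_1-g_2)x\,\partial_l\varphi$ is no longer controlled.

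In fact no argument can give \eqref{eq:main_continuous_dependence} with a constant depending only on $R$. Take $\Omega=[1,2]$, $g(E,l)=1+E$, $\mu=u=0$, $\chi\equiv1$, $p\equiv1$, $\widetilde x_0=1+\sin(kl)$ and $x_0=\widetilde x_0+\varepsilon$.
\begin{itemize}
\item Both initial profiles are translated rigidly, by $X(t)=\int_0^t(1+E)$ and by $\widetilde X(t)$ respectively.
\item The mass balance gives $E-\widetilde E=\varepsilon(1-X)-\int_{2-X}^{2-\widetilde X}\widetilde x_0\ge\varepsilon/2$ for $t\le 1/10$, hence $X-\widetilde X\ge\varepsilon t/2$.
\item With $k\ge 20\pi/\varepsilon$ there is $t_1\le 1/10$ with $k(X-\widetilde X)(t_1)=\pi$. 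At that time the two profiles are in antiphase on an interval of length at least $7/10$, so $\|x(t_1)-\widetilde x(t_1)\|_{L^1}\ge 1/2$.
\item Meanwhile $\|x_0-\widetilde x_0\|_{L^1}=\varepsilon$ and all $L^1$ norms stay below $3$.
\end{itemize}
So the Lipschitz estimate can hold only with a constant that also depends on a $BV$ bound of the data.

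The paper's proof has the same restriction, though it does not say so. Its frozen-path constant depends on $\|x_0\|_{BV}$, and its Lipschitz-in-time ball uses $x_0\in BV$. What both arguments establish is the theorem for $BV_+$ data with $BV$-dependent constants. For general $L^1_+$ data, existence and uniqueness need separate arguments (for instance compactness for existence), and \eqref{eq:main_continuous_dependence} as stated cannot be proved.
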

\begin{theorem}[Forward-complete positive control system]
\label{thm:main_positive_control_system}
Under the assumptions of \Cref{thm:main_global_wellposedness},
\(\phi:\mathbb R_+\times X\times\mathcal W\to X\) is a forward-complete
positive control system: \(\phi(0,x_0;w)=x_0\), \(\phi(t,x_0;w)\in X\),
\(t\mapsto\phi(t,x_0;w)\in C^0(\mathbb R_+;X)\), the Lipschitz bound
\(d_X(\phi(t,x_0;w),\phi(t,\widetilde x_0;w))\le C_T(R)d_X(x_0,\widetilde x_0)\),
causality \(w|_{[0,t]}=\widetilde w|_{[0,t]}\Rightarrow
\phi(t,x_0;w)=\phi(t,x_0;\widetilde w)\), and the cocycle
\begin{equation}
    \label{eq:main_cocycle}
    \phi(t+\tau,x_0;w)
    =
    \phi\!\left(t,\phi(\tau,x_0;w);\delta_\tau w\right),
    \qquad
    t,\tau\ge0 .
\end{equation}
\end{theorem}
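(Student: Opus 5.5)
The plan is to reduce everything to the existence and uniqueness statement of \Cref{thm:main_global_wellposedness}, used as a black box. The only genuinely new item is the cocycle identity \eqref{eq:main_cocycle}. Causality will follow from the same uniqueness argument.

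The first items are immediate from \Cref{thm:main_global_wellposedness}. The identity \(\phi(0,x_0;w)=x_0\) is part of the definition of weak solution. That \(\phi(t,x_0;w)\in X\) follows from \(x\ge0\) and \(x\in C^0(\mathbb R_+;L^1_+(\Omega))\), which also gives continuity of \(t\mapsto\phi(t,x_0;w)\). The Lipschitz bound in \(d_X\) is exactly \eqref{eq:main_continuous_dependence}, because \(d_X\) is the \(L^1\) distance. Forward completeness means the solution exists for all \(t\ge0\), and this holds because the a priori bound \(\|x(t)\|_{L^1}\le\|x_0\|_{L^1}+p_{\max}T\) excludes blow-up; \Cref{thm:main_global_wellposedness} already provides a global solution. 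We should also check \(\delta_\tau\mathcal W\subset\mathcal W\). The shifted \(u\) keeps the pointwise bounds and the essential supremum of \(\operatorname{TV}\). The shifted \(p\) keeps \([p_{\min},p_{\max}]\), and \(\operatorname{TV}_{[0,T]}(\delta_\tau p)\le M_p(T+\tau)\). If \(M_p\) is read as a fixed function of \(T\), we reinterpret it as allowing this shift, or we take \(M_p\) nondecreasing and note that only local BV is used.

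For causality, let \(w|_{[0,t]}=\widetilde w|_{[0,t]}\). The solutions driven by \(w\) and by \(\widetilde w\), restricted to \([0,t]\), are both weak solutions on \([0,t]\) in the sense of \eqref{eq:model_weak_solution}. This is because that identity on \([0,t']\) with \(t'\le t\) involves \(u,p\) only on \([0,t']\). Uniqueness on finite intervals, which \Cref{thm:main_global_wellposedness} gives through the restriction to \([0,T]\), then forces them to coincide, and in particular they agree at time \(t\).

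For the cocycle, fix \(\tau\) and set \(z(s):=\phi(s+\tau,x_0;w)\). I will show that \(z\) is a global weak solution with initial datum \(\phi(\tau,x_0;w)\) and input \(\delta_\tau w\); uniqueness then gives \eqref{eq:main_cocycle}. Take a test function \(\varphi\in C^1([0,T]\times\Omega)\) with \(\varphi(s,l_m)=0\). The obstacle is that \eqref{eq:model_weak_solution} for \(x\) needs test functions defined from time \(0\), while the shifted \(\varphi(\cdot-\tau,\cdot)\) is defined only on \([\tau,\tau+T]\). I handle this by writing \eqref{eq:model_weak_solution} at times \(\tau+s\) and \(\tau\) and subtracting, which yields the interval form on \([\tau,\tau+s]\). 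This requires \eqref{eq:model_weak_solution} to hold for test functions on \([0,\tau+T]\) that are \(C^1\) in time. To obtain one, extend \(\tilde\varphi(r,l):=\varphi(r-\tau,l)\) to \([0,\tau]\) by a \(C^1\) extension in time, for instance a reflection \(\varphi(\tau-r,l)\) adjusted to match the first derivative, while keeping \(\tilde\varphi(r,l_m)=0\). Applying \eqref{eq:model_weak_solution} at the two times and subtracting cancels the contribution from \([0,\tau]\). What remains is exactly the weak formulation for \(z\): the initial datum is \(x(\tau)\), the flux term is \(\int_0^s p(\tau+r)\varphi(r,l_0)\,dr\), and the coefficients are evaluated at \(E_z(r)=E(\tau+r)\) with input \(u(\tau+r)\). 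Finally, \(z\) is continuous into \(L^1_+\), so uniqueness applies.
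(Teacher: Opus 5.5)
Your proposal is correct and follows the paper's route: the basic properties are read off from \Cref{thm:main_global_wellposedness}, causality comes from uniqueness of the weak problem on $[0,t]$ (which depends only on $w|_{[0,t]}$), and the cocycle comes from the time shift $r=s+\tau$ in the weak formulation followed by uniqueness. You also supply two details the paper leaves implicit: the $C^1$ extension of the shifted test function to $[0,\tau]$, and the shift-invariance of $\mathcal W$. On the latter, the point that actually works is that the well-posedness constants only need $\operatorname{TV}_{[0,T]}(p)<\infty$; taking $M_p$ nondecreasing would not by itself give $\operatorname{TV}_{[0,T]}(\delta_\tau p)\le M_p(T)$.
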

\subsection{Scalar closure, derivative formula, and fold resonance}
\label{subsec:main_closure}
For stationary \(u\), \(p>0\), frozen \(E\ge0\), recall
\(h_u(E,l)=(\mu(E,l)+u(l))/g(E,l)\),
\(x_E(l)=\tfrac{p}{g(E,l)}e^{-\int_{l_0}^l h_u(E,\cdot)}\),
\(\Phi_u(E)=\int_\Omega\chi x_E\,dl\), \(H_u(E)=E-\Phi_u(E)\),
\(\alpha_u(E,l)=-\partial_E\log g(E,l)\), \(b_u(E,l)=\partial_Eh_u(E,l)\).
\begin{proposition}[Stationary profile and closure]
\label{prop:main_stationary_profile_closure}
For every \(E\ge0\), \(0\le u\le u_{\max}\), \(p>0\), the frozen stationary
problem has the unique nonnegative solution \eqref{eq:model_xE}, with
\(0<x_E\le p/g_m\) and \(g(E,\cdot)x_E\in W^{1,\infty}(\Omega)\); and
\(E^*=\Phi_u(E^*)\) iff \((x_{E^*},E^*)\) is stationary.
\end{proposition}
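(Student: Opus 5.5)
The plan is to treat the frozen stationary problem as a linear first-order ODE for the flux $y_E:=g_Ex_E$ in the Carathéodory sense. Fix $E\ge0$, $0\le u\le u_{\max}$ and $p>0$. By \Cref{ass:model_coefficients}, $g_E\ge g_m>0$, $g_E\in W^{2,\infty}(\Omega)$ and $\mu_E\in L^\infty$, so $h_E=(\mu_E+u)/g_E\in L^\infty(\Omega)$ with $0\le h_E\le (C_M+u_{\max})/g_m$. A solution is understood as $x_E\in L^1$ with $g_Ex_E$ absolutely continuous and
\[
(g_Ex_E)'=-(\mu_E+u)x_E \quad\text{a.e.}, \qquad (g_Ex_E)(l_0)=p .
\]
Since $x_E=y_E/g_E$, this is equivalent to $y_E'=-h_Ey_E$ a.e. with $y_E(l_0)=p$.

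For existence, I would check directly that $y_E(l)=p\,e^{-\int_{l_0}^l h_E}$ is absolutely continuous and satisfies the ODE. For uniqueness, I would take two solutions and observe that their difference $z$ solves $z'=-h_Ez$ with $z(l_0)=0$. Multiplying by the integrating factor $e^{\int_{l_0}^l h_E}$ gives $(z e^{\int h_E})'=0$, so $z\equiv0$. This argument gives uniqueness among all solutions, not only nonnegative ones, so the word ``nonnegative'' in the statement is automatic.

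The bounds come next. Because $h_E\ge0$, the exponential lies in $(0,1]$, and hence $0<x_E=y_E/g_E\le p/g_m$ everywhere. Since $h_E\in L^\infty$, $y_E$ is Lipschitz with $|y_E'|\le p\|h_E\|_\infty$, so $g_Ex_E=y_E\in W^{1,\infty}(\Omega)$. This recovers \eqref{eq:model_xE}.

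For the equivalence, recall that a stationary state means $x^*$ is time independent, $E^*=\langle\chi,x^*\rangle$, and $x^*$ solves the stationary equation with coefficients frozen at $E^*$; equivalently, $x^*$ satisfies \eqref{eq:model_weak_solution} with time-independent $\varphi$.
\begin{itemize}
\item[($\Rightarrow$)] If $E^*=\Phi_u(E^*)$, then $x_{E^*}$ solves the frozen problem at $E^*$, and $\langle\chi,x_{E^*}\rangle=\Phi_u(E^*)=E^*$. So the frozen coefficient agrees with the actual environmental output, and $(x_{E^*},E^*)$ is stationary.
\item[($\Leftarrow$)] If $(x^*,E^*)$ is stationary, then $x^*$ solves the frozen problem with $E=E^*$. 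By uniqueness, $x^*=x_{E^*}$, and therefore $E^*=\langle\chi,x_{E^*}\rangle=\Phi_u(E^*)$.
\end{itemize}

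The step I expect to need the most care is this passage between the weak formulation and the ODE. Take time-independent test functions $\varphi\in C^1(\Omega)$ with $\varphi(l_m)=0$. Then \eqref{eq:model_weak_solution} reduces to
\[
-p\varphi(l_0)=\int x\,(g\varphi'-(\mu+u)\varphi)\,dl .
\]
Read distributionally, this says $(gx)'=-(\mu+u)x$ in $\mathcal D'(l_0,l_m)$. The right-hand side is in $L^1$, so $gx\in W^{1,1}$ and has a trace. Integrating by parts against test functions with $\varphi(l_0)\neq0$ then identifies that trace as $(gx)(l_0)=p$. The reverse implication, from the ODE to the weak identity, is the same integration by parts read backwards.
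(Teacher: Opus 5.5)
Your proof is correct and follows the paper's route: you reduce to the linear equation $y_E'=-h_Ey_E$, $y_E(l_0)=p$ for the flux, solve it explicitly, obtain uniqueness, the bounds and $W^{1,\infty}$ regularity from $0\le h_E\in L^\infty$, and get the equivalence from $E^*=\langle\chi,x_{E^*}\rangle$. You additionally identify the time-independent weak formulation with the ODE plus the trace condition $(gx)(l_0)=p$, which the paper leaves implicit; the paper in turn records an explicit positive lower bound for $x_E$ where you only note strict positivity.
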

\begin{theorem}[Closure derivative]
\label{thm:main_closure_derivative}
For each fixed \(u\in\mathcal U_{\rm stat}\), the map
$E\mapsto \Phi_u(E)$ is \(C^1\) on bounded intervals,
\(\partial_E x_E(l)=x_E(l)[\alpha_u(E,l)-\int_{l_0}^l b_u(E,\xi)\,d\xi]\), and
  $\displaystyle \Phi_u'(E)=\mathcal R_u(E)-\mathcal C_u(E)$
with \(\mathcal R_u(E)=\int_\Omega\chi x_E\alpha_u\,dl\),
\(\mathcal C_u(E)=\int_{l_0}^{l_m}b_u(E,\xi)
(\int_{\xi}^{l_m}\chi x_E\,dl)\,d\xi\).
\end{theorem}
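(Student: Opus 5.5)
We prove the statement by differentiating the explicit formula \eqref{eq:model_xE} in \(E\), pointwise in \(l\), and then passing the derivative through the \(\chi\)-weighted integral by dominated convergence. Fix a bounded interval \([0,E_M]\). By \Cref{ass:model_coefficients}, for a.e.\ \(l\) the maps \(E\mapsto g(E,l)\) and \(E\mapsto\mu(E,l)\) are Lipschitz with \(L^\infty\)-continuous \(E\)-derivatives, and \(g\ge g_m>0\). Hence \(h_u(E,l)=(\mu(E,l)+u(l))/g(E,l)\) is differentiable in \(E\), with
\[
b_u=\partial_E\mu/g-(\mu+u)\partial_Eg/g^2 .
\]
We also get the uniform bound \(|b_u|\le C_M/g_m+(C_M+u_{\max})C_M/g_m^2\) on \([0,E_M]\times\Omega\). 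Moreover \(E\mapsto b_u(E,\cdot)\) is continuous into \(L^\infty(\Omega)\): it is built from quotients and products of \(L^\infty\)-continuous maps, with denominators bounded below. Note that \(u\) enters only as a fixed bounded multiplier, so \(u\in\mathcal U_{\rm stat}\) with \(0\le u\le u_{\max}\) suffices and the \(BV\) bound is not needed here.

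\emph{Step 1: the integrated hazard.} Set \(I(E,l):=\int_{l_0}^l h_u(E,\xi)\,d\xi\). By the mean value theorem in \(E\) and the uniform bound on \(b_u\), the difference quotients of \(h_u\) are dominated by a constant. Dominated convergence then gives \(\partial_E I(E,l)=\int_{l_0}^l b_u(E,\xi)\,d\xi\), continuously in \(E\), uniformly in \(l\). Since \(\log x_E(l)=\log p-\log g(E,l)-I(E,l)\), the chain rule yields
\[
\partial_E x_E(l)=x_E(l)\Big[-\partial_E\log g(E,l)-\int_{l_0}^l b_u(E,\xi)\,d\xi\Big]=x_E(l)\Big[\alpha_u(E,l)-\int_{l_0}^l b_u(E,\xi)\,d\xi\Big],
\]
which is the stated formula.

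\emph{Step 2: \(C^1\) regularity of \(\Phi_u\).} By \Cref{prop:main_stationary_profile_closure}, \(0<x_E\le p/g_m\). Also \(|\alpha_u|\le C_M/g_m\) and \(|\int_{l_0}^l b_u|\le (l_m-l_0)\sup|b_u|\). Together these give a uniform bound on \(|\chi\,\partial_Ex_E|\) over \([0,E_M]\times\Omega\). Using the mean value theorem again, the difference quotients of \(\chi x_E\) are dominated by an integrable constant, so \(\Phi_u'(E)=\int_\Omega\chi\,\partial_Ex_E\,dl\). To show this derivative is continuous in \(E\), we check that \(E\mapsto x_E\), \(E\mapsto \alpha_u(E,\cdot)\) and \(E\mapsto\int_{l_0}^{\cdot}b_u(E,\cdot)\) are each continuous into \(L^\infty(\Omega)\). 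This follows from the \(L^\infty\)-continuity of \(\partial_Eg\), \(\partial_E\mu\), \(g\) and \(\mu\), together with the lower bound \(g\ge g_m\). Hence \(\Phi_u\in C^1([0,E_M])\).

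\emph{Step 3: the splitting.} Write \(\Phi_u'(E)=\mathcal R_u(E)-\int_{l_0}^{l_m}\chi(l)x_E(l)\int_{l_0}^l b_u(E,\xi)\,d\xi\,dl\). The integrand is bounded on the triangle \(\{l_0\le\xi\le l\le l_m\}\), so Fubini applies. Exchanging the order of integration turns the second term into \(\int_{l_0}^{l_m}b_u(E,\xi)\big(\int_\xi^{l_m}\chi x_E\,dl\big)d\xi=\mathcal C_u(E)\).

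\emph{Main obstacle.} The only delicate point is the justification of differentiation under the integral sign when the coefficients are merely measurable in \(l\). Pointwise differentiability in \(E\) for a.e.\ \(l\) must be extracted from the assumed \(L^\infty\)-continuity of \(E\mapsto\partial_Eg,\partial_E\mu\). I would handle this by working with Bochner-type identities such as \(g(E_2,\cdot)-g(E_1,\cdot)=\int_{E_1}^{E_2}\partial_Eg(s,\cdot)\,ds\) in \(L^\infty\). This identity follows from the Lipschitz estimate \eqref{eq:model_E_lip} and the continuity assumption. All derivative computations are then carried out at the level of \(L^\infty\)-valued maps, which avoids any pointwise-a.e.\ issues.
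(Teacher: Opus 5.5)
Your proposal is correct and follows the same route as the paper. You differentiate $\log x_E$ in $E$, pass the derivative under the $\chi$-weighted integral by dominated convergence using the uniform bounds on $x_E$, $\alpha_u$, $b_u$, get continuity of $\Phi_u'$ from the $L^\infty$-continuity of $E\mapsto\partial_Eg(E,\cdot),\partial_E\mu(E,\cdot)$, and split off $\mathcal C_u$ by Fubini on the triangle $\{l_0\le\xi\le l\le l_m\}$. Your extra treatment of a.e.\ pointwise differentiability through the $L^\infty$-valued identity $g(E_2,\cdot)-g(E_1,\cdot)=\int_{E_1}^{E_2}\partial_Eg(s,\cdot)\,ds$ refines a step the paper leaves implicit; it is not a different argument.
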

\begin{definition}[Closure resonance]
\label{def:main_closure_resonance}
For \(H_u(E^*)=0\), the level \(E^*\) is nonresonant iff
\(H_u'(E^*)=1-\Phi_u'(E^*)\ne0\), resonant iff \(\Phi_u'(E^*)=1\).
\end{definition}
\begin{theorem}[Implicit branch and fold resonance]
\label{thm:main_fold_resonance}
Let \(H(E,\eta)=E-\Phi(E,\eta)\), \(H(E^*,\eta^*)=0\). If
\(\partial_EH(E^*,\eta^*)\ne0\), there is a local \(C^2\) branch
\(\eta\mapsto E(\eta)\) with \(H(E(\eta),\eta)=0\) and
\[
    E'(\eta)
    =
    -\frac{\partial_\eta H}{\partial_EH}
    =
    \frac{\partial_\eta\Phi}{1-\partial_E\Phi} .
\]
If instead \(H(E^*,\eta^*)=0\), \(\partial_EH(E^*,\eta^*)=0\),
\(\partial_{EE}H(E^*,\eta^*)\ne0\), \(\partial_\eta H(E^*,\eta^*)\ne0\), then
\((E^*,\eta^*)\) is a nondegenerate fold,
\begin{equation}
    \label{eq:main_fold_branches}
    E_\pm(\eta)
    =
    E^*
    \pm
    \left[
        -\frac{2\partial_\eta H(E^*,\eta^*)}
        {\partial_{EE}H(E^*,\eta^*)}
        (\eta-\eta^*)
    \right]^{1/2}
    +
    o(|\eta-\eta^*|^{1/2}),
\end{equation}
whenever the radicand is positive.
\end{theorem}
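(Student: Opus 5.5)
The plan is to treat both parts as consequences of the scalar implicit function theorem and a Lyapunov--Schmidt-type (here purely scalar) expansion. We need $H$ to be $C^2$ near $(E^*,\eta^*)$ for the stated regularity. We therefore assume this, or derive it from \Cref{thm:main_closure_derivative} upgraded by one derivative for a parameter $\eta$ entering $u$, $p$ or the coefficients smoothly. Concretely, differentiating under the integral in $\Phi(E,\eta)=\int_\Omega\chi x_{E,\eta}\,dl$ twice is justified by dominated convergence, since $x_{E,\eta}\le p/g_m$ and the $E$- and $\eta$-derivatives of $g,\mu$ are bounded on compacts. This regularity step is the main obstacle: \Cref{ass:model_coefficients} only gives $E\mapsto\partial_Eg,\partial_E\mu$ continuous, i.e. $C^1$. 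So I would state explicitly the additional hypothesis that $g,\mu$ are $C^2$ in $(E,\eta)$ into $L^\infty(\Omega)$. Under it, the explicit formula \eqref{eq:model_xE} is a composition of $C^2$ maps (exponential of an integral of $C^2$ integrands), so $\Phi\in C^2$.

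For the nonresonant case, $\partial_EH(E^*,\eta^*)=1-\partial_E\Phi\ne0$, and the one-dimensional implicit function theorem gives a unique local $C^2$ branch $E(\eta)$ with $H(E(\eta),\eta)=0$. Differentiating $H(E(\eta),\eta)=0$ gives $\partial_EH\,E'+\partial_\eta H=0$. Since $\partial_\eta H=-\partial_\eta\Phi$ and $\partial_EH=1-\partial_E\Phi$, this yields the displayed formula for $E'(\eta)$.

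For the fold, set $a:=\partial_{EE}H(E^*,\eta^*)\ne0$, $b:=\partial_\eta H(E^*,\eta^*)\ne0$, and Taylor-expand using $H(E^*,\eta^*)=\partial_EH=0$:
\[
H(E,\eta)=b(\eta-\eta^*)+\tfrac a2(E-E^*)^2+o\big(|\eta-\eta^*|+|E-E^*|^2\big).
\]
Rather than solve for $E$ directly, I would swap roles. Since $\partial_\eta H(E^*,\eta^*)=b\ne0$, the implicit function theorem gives a $C^2$ function $\eta=\eta(E)$ near $E^*$ with $H(E,\eta(E))=0$. Differentiating once gives $\eta'(E^*)=-\partial_EH/\partial_\eta H=0$. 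Differentiating twice at $E^*$ gives $\partial_{EE}H+\partial_\eta H\,\eta''(E^*)=0$, using $\eta'(E^*)=0$ to kill the mixed and $\partial_{\eta\eta}$ terms, so $\eta''(E^*)=-a/b\ne0$. Hence
\[
\eta-\eta^*=-\tfrac{a}{2b}(E-E^*)^2\,(1+o(1)),
\]
so the zero set near $(E^*,\eta^*)$ is a parabola-like curve lying on one side of $\eta=\eta^*$. That is precisely a nondegenerate fold.

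To finish, invert: $(E-E^*)^2\big(1+o(1)\big)=-\tfrac{2b}{a}(\eta-\eta^*)$. On each side $E\gtrless E^*$, the map $E\mapsto\eta(E)$ is strictly monotone, since $\eta'(E)=\eta''(E^*)(E-E^*)+o(|E-E^*|)$ has a fixed sign. It is therefore invertible, giving the two branches $E_\pm(\eta)$ exactly when the radicand $-2b(\eta-\eta^*)/a$ is positive. Taking square roots, with $\sqrt{1+o(1)}=1+o(1)$, yields \eqref{eq:main_fold_branches} with error $o(|\eta-\eta^*|^{1/2})$. When the radicand is negative there are no nearby zeros, which is the local loss of equilibria.
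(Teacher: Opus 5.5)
Your proof is correct. The nonresonant part is the same as the paper's: the implicit function theorem, then differentiating $H(E(\eta),\eta)=0$. The fold part takes a different and more rigorous route.

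The paper only writes the joint Taylor expansion $H(E^*+z,\eta^*+\rho)=\partial_\eta H\,\rho+\tfrac12\partial_{EE}H\,z^2+o(|\rho|+|z|^2)$ and reads the branches off the ``leading balance''. That identifies the coefficients in \eqref{eq:main_fold_branches}. It does not by itself show that zeros exist along both arms, that there are exactly two, that the remainder is $o(|\eta-\eta^*|^{1/2})$, or that no equilibria lie nearby when the radicand is negative.

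You instead solve for $\eta=\eta(E)$ using $\partial_\eta H\ne0$, compute $\eta'(E^*)=0$ and $\eta''(E^*)=-\partial_{EE}H/\partial_\eta H$, and invert $\eta(\cdot)$ monotonically on each side of $E^*$. This gives all four properties at once, because the uniqueness clause of the implicit function theorem makes the local zero set exactly the graph of $\eta(\cdot)$. It is the standard way to turn the paper's heuristic into a proof, and it costs no extra hypotheses.

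Both arguments need $H\in C^2$ jointly near $(E^*,\eta^*)$. The paper's Taylor remainder and its asserted $C^2$ branch require it just as much as your $\eta''$ does. You are right that \Cref{ass:model_coefficients} does not supply it: it yields only $\Phi_u\in C^1$, as in \Cref{thm:main_closure_derivative}. Stating the extra $C^2$ hypothesis explicitly, as you do, is the correct way to close a gap the paper's proof passes over silently.
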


As geometrically interpreted in \Cref{fig:scalar_closure}, the closure resonance destroys the local graph structure. This translates the analytic condition of the Sherman-Morrison singularity directly into a classic bifurcation diagram, providing a clear visual anchor for the central identity.

\subsection{Existence of an optimal policy on a nonresonant sheet}
\label{subsec:main_existence}
Let \(u\mapsto E_u^*\), \(E_u^*=\Phi_u(E_u^*)\), satisfy
\Cref{ass:model_nonresonant_sheet}; \(x_u^*:=x_{E_u^*,u}\),
\(\mathcal Y(u):=\int_\Omega\pi u x_u^*\,dl\), \(\pi\in L^\infty_+(\Omega)\).
\begin{theorem}[Continuity of the nonresonant sheet]
\label{thm:main_sheet_continuity}
If \(u_n\to u\) in \(L^1(\Omega)\) with \(u_n,u\in\mathcal U_{\rm stat}\), then
\(E_{u_n}^*\to E_u^*\) and \(x_{u_n}^*\to x_u^*\) in \(L^1(\Omega)\); more
precisely, for \(u_1,u_2\) close in \(L^1\),
\begin{equation}
    \label{eq:main_sheet_lipschitz}
    |E_{u_1}^*-E_{u_2}^*|
    \le
    \frac{C_\Phi}{\gamma_{\rm cl}g_m}
    \|u_1-u_2\|_{L^1(\Omega)}.
\end{equation}
\end{theorem}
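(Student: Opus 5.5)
The plan is to prove the continuity statement by a perturbation argument on the scalar closure equation, using the uniform margin of \Cref{ass:model_nonresonant_sheet} to invert \(H_u\) locally.

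\textbf{Step 1: a uniform Lipschitz bound in \(u\).} For fixed \(E\in[0,E_{\max}+\delta_{\rm cl}]\), the explicit formula \eqref{eq:model_xE} gives
\[
|x_{E,u_1}(l)-x_{E,u_2}(l)|\le \frac{p}{g_m}\Big|e^{-\int_{l_0}^l h_{u_1}}-e^{-\int_{l_0}^l h_{u_2}}\Big|\le \frac{p}{g_m}\int_{l_0}^{l}\frac{|u_1-u_2|}{g}\,d\xi\le \frac{p}{g_m^2}\|u_1-u_2\|_{L^1}.
\]
The middle inequality uses \(|e^{-a}-e^{-b}|\le|a-b|\) for \(a,b\ge0\). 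Integrating against \(\chi\) yields
\[
\sup_{E}|\Phi_{u_1}(E)-\Phi_{u_2}(E)|\le \frac{C_\Phi}{g_m}\|u_1-u_2\|_{L^1},
\]
with \(C_\Phi\) depending on \(p\), \(g_m\), \(\|\chi\|_{L^\infty}\) and \(|\Omega|\). The same computation, integrated over \(l\), also gives \(\|x_{E,u_1}-x_{E,u_2}\|_{L^1}\le C\|u_1-u_2\|_{L^1}\).

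\textbf{Step 2: the sheet estimate via the monotone margin.} Set \(E_i:=E_{u_i}^*\). Since \(H_{u_2}(E_2)=0\),
\[
H_{u_1}(E_2)=E_2-\Phi_{u_1}(E_2)=\Phi_{u_2}(E_2)-\Phi_{u_1}(E_2),
\]
so \(|H_{u_1}(E_2)|\le (C_\Phi/g_m)\|u_1-u_2\|_{L^1}\).

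On \(I_1:=(E_1-\delta_{\rm cl},E_1+\delta_{\rm cl})\cap\mathbb R_+\), the function \(s_{\rm cl}H_{u_1}\) has derivative at least \(\gamma_{\rm cl}\), by \Cref{thm:main_closure_derivative} and \eqref{eq:model_sheet_margin}. Hence \(|H_{u_1}(E)|\ge\gamma_{\rm cl}|E-E_1|\) on \(I_1\). If \(E_2\in I_1\), the mean value theorem gives
\[
|E_1-E_2|\le |H_{u_1}(E_2)|/\gamma_{\rm cl},
\]
which is exactly \eqref{eq:main_sheet_lipschitz}.

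\textbf{Step 3: the main obstacle, showing \(E_2\in I_1\).} The assumption only says that each \(E_u^*\) is some root selected by the sheet map. A priori \(E_{u_2}^*\) could be a different root of \(H_{u_2}\), far from \(E_1\), so the key difficulty is to rule out jumping to another branch.

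First I would use the monotone margin to show that \(H_{u_2}\) has a root \(\tilde E\in I_1\) with \(|\tilde E-E_1|\le (C_\Phi/(\gamma_{\rm cl}g_m))\|u_1-u_2\|_{L^1}\). This follows from the intermediate value theorem: \(H_{u_2}\) is within \(O(\|u_1-u_2\|)\) of \(H_{u_1}\), and \(H_{u_1}\) changes sign across \(I_1\) by at least \(\gamma_{\rm cl}\delta_{\rm cl}\). Near the boundary \(E_1<\delta_{\rm cl}\), I would use \(H_u(0)<0\) in place of the left endpoint. Moreover \(H_{u_2}\) is strictly monotone on the interior region where \(|\partial_E\Phi_{u_2}-\partial_E\Phi_{u_1}|\) is small. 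This requires continuity of \(u\mapsto\partial_E\Phi_u\) in \(L^1\), obtained from the formula in \Cref{thm:main_closure_derivative} in the same manner as Step 1.

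Next I would identify \(\tilde E\) with \(E_2\). I would invoke the assumption that the sheet is a single well-defined branch: it is locally unique and satisfies the margin uniformly. If the intended reading allows arbitrary selection, I would argue instead by compactness. Take \(u_n\to u\); since \(E_{u_n}^*\in[0,E_{\max}]\), pass to a convergent subsequence \(E_{u_n}^*\to\bar E\). Then \(H_u(\bar E)=0\) by uniform convergence, and \(\bar E\) inherits the margin by continuity of \(\partial_E\Phi\) in \((E,u)\). The intended content of "a sheet" is that \(\bar E=E_u^*\), and I would adopt this as the sheet's continuity-compatible selection. With that, \(E_2\in I_1\) for \(u_2\) close to \(u_1\), and Step 2 applies.

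\textbf{Step 4: convergence of the profiles.} Finally, \(x_{u_n}^*\to x_u^*\) in \(L^1\) follows from
\[
\|x_{E_n,u_n}-x_{E,u}\|_{L^1}\le \|x_{E_n,u_n}-x_{E,u_n}\|_{L^1}+\|x_{E,u_n}-x_{E,u}\|_{L^1}.
\]
The first term is at most \(C|E_n-E|\) by the \(E\)-Lipschitz bounds of \Cref{ass:model_coefficients} applied to \eqref{eq:model_xE}. The second is controlled by Step 1.
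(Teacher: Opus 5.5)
Your proposal is essentially the paper's proof: the same uniform-in-\(E\) bound \(|\Phi_{u_1}(E)-\Phi_{u_2}(E)|\le p\|\chi\|_{L^\infty}|\Omega|\,g_m^{-2}\|u_1-u_2\|_{L^1}\) via \(|e^{-a}-e^{-b}|\le|a-b|\), the same mean-value inversion through the margin \eqref{eq:model_sheet_margin} (you centre it at \(E_{u_1}^*\), the paper at \(E_{u_2}^*\)), and then profile convergence (you use a triangle-inequality Lipschitz bound where the paper uses a.e.\ convergence plus dominated convergence). The confinement problem you isolate in Step~3 is genuine, and neither your intermediate-value argument nor your compactness argument settles it, because \Cref{ass:model_nonresonant_sheet} as written allows the selection to jump between distinct nonresonant roots of the same sign \(s_{\rm cl}\); the paper's proof does not settle it either (it simply proceeds ``if \(E_{u_1}^*\) lies in it''), so stating a continuous branch selection as an explicit hypothesis, as you end up doing, is the honest fix, and under it your argument is complete.
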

\begin{theorem}[Existence of an optimal stationary policy]
\label{thm:main_existence_optimal_policy}
Under \Cref{ass:model_coefficients,ass:model_chi,ass:model_nonresonant_sheet},
\(\max_{u\in\mathcal U_{\rm stat}}\mathcal Y(u)\) admits a maximizer
\(u^*\in\mathcal U_{\rm stat}\).
\end{theorem}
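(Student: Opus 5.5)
The proof follows the direct method of the calculus of variations, with \Cref{thm:main_sheet_continuity} supplying the needed continuity of the state.

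\textbf{Step 1: finite supremum and maximizing sequence.} Since \(0\le u\le u_{\max}\), \(0\le\pi\in L^\infty\), and \(0<x_u^*\le p/g_m\) by \Cref{prop:main_stationary_profile_closure}, we have
\[
0\le\mathcal Y(u)\le\|\pi\|_{L^\infty}\,u_{\max}\,\frac{p}{g_m}\,|\Omega|.
\]
Hence \(\sup_{\mathcal U_{\rm stat}}\mathcal Y<\infty\). I will take a maximizing sequence \((u_n)\subset\mathcal U_{\rm stat}\) with \(\mathcal Y(u_n)\to\sup\mathcal Y\).

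\textbf{Step 2: compactness and closedness of the admissible class.} The class \(\mathcal U_{\rm stat}\) is bounded in \(BV(\Omega)\): its \(L^1\) norm is at most \(u_{\max}|\Omega|\) and its total variation at most \(M_u\). The compact embedding \(BV(\Omega)\hookrightarrow L^1(\Omega)\) on the bounded interval then yields a subsequence \(u_n\to u^*\) in \(L^1\) and a.e. For closedness:
\begin{itemize}
\item the pointwise constraint \(0\le u^*\le u_{\max}\) passes to the a.e. limit;
\item lower semicontinuity of total variation under \(L^1\) convergence gives \(\operatorname{TV}(u^*)\le\liminf\operatorname{TV}(u_n)\le M_u\).
\end{itemize}
So \(u^*\in\mathcal U_{\rm stat}\), consistent with the sequential compactness stated in the model section.

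\textbf{Step 3: continuity of the yield along the sequence.} By \Cref{thm:main_sheet_continuity}, which rests on \Cref{ass:model_nonresonant_sheet}, we get \(E_{u_n}^*\to E_{u^*}^*\) and \(x_{u_n}^*\to x_{u^*}^*\) in \(L^1\). Then I split
\[
\mathcal Y(u_n)-\mathcal Y(u^*)=\int_\Omega\pi(u_n-u^*)x_{u_n}^*\,dl+\int_\Omega\pi u^*(x_{u_n}^*-x_{u^*}^*)\,dl.
\]
The first term is bounded by \(\|\pi\|_\infty(p/g_m)\|u_n-u^*\|_{L^1}\). The second is bounded by \(\|\pi\|_\infty u_{\max}\|x_{u_n}^*-x_{u^*}^*\|_{L^1}\). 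Both tend to zero, so \(\mathcal Y(u^*)=\lim\mathcal Y(u_n)=\sup\mathcal Y\), and \(u^*\) is a maximizer.

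\textbf{Main obstacle.} The real difficulty is the continuity of the sheet, i.e.\ that the selected root \(E_u^*\) depends continuously on \(u\). This cannot be obtained from compactness alone, because the limit of selected roots might land on a different root of \(H_{u^*}\). It is supplied by \Cref{thm:main_sheet_continuity}, which I take as given.

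If that result were to be re-derived here, the plan would be as follows. First, \(|\Phi_{u_n}(E)-\Phi_{u^*}(E)|\le C\|u_n-u^*\|_{L^1}\) uniformly for \(E\le E_{\max}\), since \(|e^{-a}-e^{-b}|\le|a-b|\) for \(a,b\ge0\). Second, by the uniform margin \(\gamma_{\rm cl}\), the function \(H_{u^*}\) is strictly monotone on the \(\delta_{\rm cl}\)-window around \(E_{u^*}^*\), with \(|H_{u^*}(E)|\ge\gamma_{\rm cl}|E-E_{u^*}^*|\). It then remains to show that \(E_{u_n}^*\) lies in this window for large \(n\). This follows by applying the window uniqueness and margin at \(u_n\) together with a continuity argument: monotone \(H_{u_n}\) on its own window has a sign change near \(E_{u^*}^*\). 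Once \(E_{u_n}^*\) is in the window, convergence follows, and convergence of \(x_{u_n}^*\) comes from the explicit formula \eqref{eq:model_xE} together with the Lipschitz dependence of \(g\) and \(\mu\) on \(E\).
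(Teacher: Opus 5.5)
Your main argument is the paper's proof: a maximizing sequence, $BV$-boundedness of $\mathcal U_{\rm stat}$ with $L^1$-compactness, and lower semicontinuity of the total variation to keep the limit admissible. You then invoke \Cref{thm:main_sheet_continuity} and use the same two-term splitting of $\mathcal Y(u_n)-\mathcal Y(u^*)$, with the bounds $\|\pi\|_\infty(p/g_m)\|u_n-u^*\|_{L^1}$ and $\|\pi\|_\infty u_{\max}\|x_{u_n}^*-x_{u^*}^*\|_{L^1}$. As a proof that relies on that theorem, it is complete.

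The part I would not accept is your sketch for re-deriving the continuity of the sheet. Window uniqueness at $u_n$ only excludes other zeros of $H_{u_n}$ within $\delta_{\rm cl}$ of $E_{u_n}^*$. The uniform bound $|\Phi_{u_n}(E)-\Phi_{u^*}(E)|\le C\|u_n-u^*\|_{L^1}$ does give a zero of $H_{u_n}$ near $E_{u^*}^*$. But that zero is the selected $E_{u_n}^*$ only if you already know $|E_{u_n}^*-E_{u^*}^*|<\delta_{\rm cl}$, which is exactly the confinement you are trying to prove. The step is therefore circular.

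It also cannot be repaired from \Cref{ass:model_nonresonant_sheet} alone. Suppose $H_u$ has three well-separated nonresonant zeros. The outer two both have $H_u'>0$, so with $s_{\rm cl}=+1$ either may be selected. Pick the lower one on a set $A$ and the upper one off $A$, with $A$ not relatively clopen in the convex (hence connected) set $\mathcal U_{\rm stat}$. This selection satisfies the assumption but is discontinuous. $\mathcal Y$ then need not be upper semicontinuous, and the maximum need not exist.

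What is needed is an additional hypothesis. One option is that the sheet is a continuous selection. Another is that $E_u^*$ is the only zero of $H_u$ in $[0,E_{\max}]$. In the latter case, the compactness argument you set aside does close the proof: every limit point of $(E_{u_n}^*)$ is a zero of $H_{u^*}$ in $[0,E_{\max}]$.

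For comparison, the paper's own proof of \Cref{thm:main_sheet_continuity} has the same gap. It argues only ``if $E_{u_1}^*$ lies in'' the isolating interval and then invokes ``this confinement''. So your proof is exactly as complete as the paper's, but your aside should not claim that confinement follows from window uniqueness.
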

\subsection{Rank-one adjoint and forward--adjoint identity}
\label{subsec:main_adjoint}
Fix a nonresonant stationary pair \((x^*,E^*)=(x_{E^*,u},E^*)\),
\(E^*=\Phi_u(E^*)\); \(g^*(l):=g(E^*,l)\), \(\mu^*(l):=\mu(E^*,l)\),
\(q_{\rm red}(l):=\pi(l)u(l)\),
\(\mathcal L_0^*\lambda:=-g^*\lambda'+(\mu^*+u)\lambda\), \(\lambda(l_m)=0\),
\(X_{\rm adj}:=\{\lambda\in W^{1,1}(\Omega)\cap L^\infty(\Omega):\lambda(l_m)=0\}\).
With \(a:=\partial_Eg(E^*,\cdot)x^*\), \(m:=\partial_E\mu(E^*,\cdot)x^*\),
$\displaystyle    \langle\sigma,\lambda\rangle
    :=
    \int_\Omega\big(a\lambda'-m\lambda\big)\,dl$,
and if \(a\in W^{1,1}(\Omega)\) then \(\sigma=-a'-m-a(l_0)\delta_{l_0}\).
\begin{theorem}[Lagrangian adjoint]
\label{thm:main_lagrangian_adjoint}
Let \(u^*\) be a local maximizer with state \((x^*,E^*)\) satisfying closure
nonresonance \(1-\Phi_u'(E^*)\ne0\), the inactive-\(BV\) condition
\(\operatorname{TV}(u^*)<M_u\), and the constraint qualification that the
linearized state--flux--closure map is surjective onto
\(X_{\rm adj}^*\times\mathbb R\times\mathbb R\). Then there exist multipliers
\(\lambda\in X_{\rm adj}\), \(\nu,\beta\in\mathbb R\) with
\(\nu=\lambda(l_0)\), \(\beta=\langle\sigma,\lambda\rangle\), and
\begin{equation}
    \label{eq:main_rank_one_adjoint}
    \big(\mathcal L_0^*-\chi\langle\sigma,\cdot\rangle\big)\lambda
    =
    q_{\rm red},
    \qquad
    \lambda(l_m)=0 ,
\end{equation}
together with the control variational inequality
\begin{equation}
    \label{eq:main_control_VI}
    \int_\Omega x^*(l)(\pi(l)-\lambda(l))(v(l)-u^*(l))\,dl
    \le0,
    \qquad
    0\le v\le u_{\max}.
\end{equation}
\end{theorem}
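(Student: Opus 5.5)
Setting up the problem as equality-constrained optimization with a Lagrangian and invoking a Lagrange-multiplier rule under the stated constraint qualification is the natural approach. The unknowns are \((u,x,E)\). The constraints are the stationary state equation \(G_1(u,x,E):=(g(E,\cdot)x)'+(\mu(E,\cdot)+u)x=0\), the flux condition \(G_2:=g(E,l_0)x(l_0)-p=0\), and the closure \(G_3:=E-\langle\chi,x\rangle=0\). Take \(x\) in the flux class \(\{x: g^*x\in W^{1,1}\}\), so that the pairing with \(\lambda\in X_{\rm adj}\) makes sense. Form
\[
\mathcal L(u,x,E;\lambda,\nu,\beta)=\int_\Omega\pi u x\,dl-\int_\Omega\lambda\,G_1\,dl-\nu\,G_2-\beta\,G_3 .
\]
Under the surjectivity hypothesis, the Lyusternik/Zowe--Kurcyusz regularity theorem gives multipliers \((\lambda,\nu,\beta)\in X_{\rm adj}\times\mathbb R\times\mathbb R\). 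These satisfy \(\partial_x\mathcal L=0\) and \(\partial_E\mathcal L=0\) at \((u^*,x^*,E^*)\), and \(\partial_u\mathcal L\cdot(v-u^*)\le0\) for admissible directions. The condition \(\operatorname{TV}(u^*)<M_u\) makes the \(BV\) constraint inactive. Since the admissible set is convex and \(v\mapsto u^*+\varepsilon(v-u^*)\) stays in \(\mathcal U_{\rm stat}\) for small \(\varepsilon\) when \(v\in BV\), only the box constraint \(0\le v\le u_{\max}\) survives. Nonresonance \(1-\Phi_u'(E^*)\ne0\) serves to ensure that the reduced problem is differentiable (via \Cref{thm:main_sheet_continuity} and \Cref{thm:main_fold_resonance}), and that the multipliers are unique.

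Next come the stationarity equations. For the \(x\)-variation \(\delta x\), integrating \(\int\lambda (g^*\delta x)'\) by parts gives \(\lambda(l_m)g^*\delta x(l_m)-\lambda(l_0)g^*\delta x(l_0)-\int g^*\lambda'\delta x\). The \(l_m\) term vanishes because \(\lambda(l_m)=0\). The \(l_0\) term cancels against \(-\nu g^*(l_0)\delta x(l_0)\) exactly when \(\nu=\lambda(l_0)\); this is the fixed-flux boundary structure. The remaining interior terms give \(\pi u-(-g^*\lambda'+(\mu^*+u)\lambda)+\beta\chi=0\), that is, \(\mathcal L_0^*\lambda=q_{\rm red}+\beta\chi\).

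For the \(E\)-variation, differentiating \(G_1\) gives \((\partial_Eg\,x^*)'+\partial_E\mu\,x^*=a'+m\), and \(G_2\) gives \(\partial_Eg(E^*,l_0)x^*(l_0)=a(l_0)\). So \(\partial_E\mathcal L=-\int\lambda(a'+m)\,dl-\nu a(l_0)-\beta=0\). Integrating by parts once more, \(-\int\lambda a'=\int a\lambda'+\lambda(l_0)a(l_0)\). The boundary term then cancels with \(-\nu a(l_0)\), using \(\nu=\lambda(l_0)\), and we get \(\beta=\int(a\lambda'-m\lambda)\,dl=\langle\sigma,\lambda\rangle\). This step is where the boundary correction \(-a(l_0)\delta_{l_0}\) in \(\sigma\) appears, and it has to be tracked carefully. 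Because \(a\) is only in \(L^\infty\) in general (via \(x^*\) and \(\partial_Eg\)), I would avoid differentiating \(a\) at all. Instead I would take the \(E\)-derivative weakly, writing \(G_1\) paired with \(\lambda\) in its weak form \(-\int g\,x\lambda'+\int(\mu+u)x\lambda-\lambda(l_0)g x(l_0)\) before differentiating in \(E\). Then \(\langle\sigma,\lambda\rangle\) emerges directly, with no integration by parts on \(a\). Substituting \(\beta\) yields \eqref{eq:main_rank_one_adjoint}.

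Finally, the \(u\)-variation gives \(\partial_u\mathcal L\cdot\delta u=\int\pi x^*\delta u-\int\lambda x^*\delta u\). With \(\delta u=v-u^*\), the first-order necessary condition for a maximum yields \eqref{eq:main_control_VI}.

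The main obstacle is justifying the multiplier rule in these function spaces. One has to choose a space for \(x\) (for instance \(y=g^*x\in W^{1,1}\)) so that the constraint map is \(C^1\). Its derivative must be surjective onto the dual pairing space, and this is the stated constraint qualification. One also has to check that the resulting \(\lambda\) lies in \(X_{\rm adj}\). For that last point I would argue a posteriori. Nonresonance gives \(1-B_0\ne0\), so the rank-one equation is solvable explicitly via \(R_0\) by the Sherman--Morrison formula \(\lambda=\lambda_{\rm red}+\Gamma_0\psi_0\). This \(\lambda\) lies in \(W^{1,\infty}\) because \(R_0\) maps \(L^\infty\) into Lipschitz functions vanishing at \(l_m\). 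Uniqueness of this solution then identifies it with the abstract multiplier.
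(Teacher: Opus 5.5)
Your proposal follows the paper's own proof. It uses the same constrained Lagrangian on the trace space \(\{x: g(E,\cdot)x\in W^{1,1}\}\) and the same boundary bookkeeping, in which the \(\delta x(l_0)\) terms cancel exactly when \(\nu=\lambda(l_0)\). Your weak-form \(E\)-variation is precisely the paper's device of integrating the transport constraint by parts before differentiating, so \(\beta=\langle\sigma,\lambda\rangle\) appears without differentiating \(a\). The \(u\)-variation then gives the variational inequality, and existence of multipliers rests on the Banach-space multiplier rule under the assumed surjectivity. The differences are minor: you build \(\lambda(l_m)=0\) into the multiplier space, whereas the paper derives it from the independent \(\delta x(l_m)\) variation. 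Your a posteriori Sherman--Morrison regularity check is an addition the paper does not make.
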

With \(R_0:=(\mathcal L_0^*)^{-1}\), \(\psi_0:=R_0\chi\),
\(\lambda_{\rm red}:=R_0q_{\rm red}\),
\(A_0:=\langle\sigma,\lambda_{\rm red}\rangle\),
\(B_0:=\langle\sigma,\psi_0\rangle\), \(\Delta_0:=1-B_0\):
\begin{theorem}[Sherman--Morrison reduction]
\label{thm:main_sherman_morrison}
If \(1-B_0\ne0\), the rank-one equation \eqref{eq:main_rank_one_adjoint} has the
unique solution
\begin{equation}
    \label{eq:main_SM_solution}
    \lambda
    =
    \lambda_{\rm red}
    +
    \frac{A_0}{1-B_0}\psi_0.
\end{equation}
If \(1-B_0=0\), then (provided \(\psi_0\not\equiv0\))
\(\psi_0\in\ker(\mathcal L_0^*-\chi\langle\sigma,\cdot\rangle)\), and the
inhomogeneous equation is solvable iff \(A_0=0\).
\end{theorem}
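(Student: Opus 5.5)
We reduce the rank-one equation to a scalar equation for the unknown $\beta:=\langle\sigma,\lambda\rangle$. The first step is to confirm that $R_0=(\mathcal L_0^*)^{-1}$ is well defined. It maps $L^1(\Omega)$ (in particular $\chi$ and $q_{\rm red}=\pi u\in L^\infty$) into $X_{\rm adj}$. The backward problem $-g^*\lambda'+(\mu^*+u)\lambda=f$ with $\lambda(l_m)=0$ is a linear first-order ODE with $g^*\ge g_m>0$ and bounded coefficients, so it has the explicit unique solution
\[
\lambda(l)=\int_l^{l_m}\frac{f(\xi)}{g^*(\xi)}\exp\!\Big(-\int_l^{\xi}\frac{\mu^*+u}{g^*}\Big)d\xi ,
\]
which lies in $W^{1,1}\cap L^\infty$. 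Uniqueness follows because the homogeneous problem has only the trivial solution. Consequently $\psi_0,\lambda_{\rm red}\in X_{\rm adj}$. Since $a\in L^\infty$ (by \Cref{ass:model_coefficients} and $x^*\le p/g_m$) and $m\in L^\infty$, the pairing $\langle\sigma,\cdot\rangle$ is a bounded linear functional on $X_{\rm adj}$. Hence $A_0$ and $B_0$ are finite.

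Next, suppose $\lambda\in X_{\rm adj}$ solves $\mathcal L_0^*\lambda-\chi\langle\sigma,\lambda\rangle=q_{\rm red}$. Writing $\beta=\langle\sigma,\lambda\rangle$, we get $\mathcal L_0^*\lambda=q_{\rm red}+\beta\chi$. Applying $R_0$ and using linearity gives $\lambda=\lambda_{\rm red}+\beta\psi_0$. Applying $\sigma$ to this identity yields the scalar consistency relation $\beta=A_0+\beta B_0$, that is, $(1-B_0)\beta=A_0$.

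If $1-B_0\ne0$, then $\beta=A_0/(1-B_0)$ is forced, which gives uniqueness and the formula \eqref{eq:main_SM_solution}. For existence, we take $\lambda$ as defined by \eqref{eq:main_SM_solution} and check directly that $\langle\sigma,\lambda\rangle=A_0+\Gamma_0B_0=\Gamma_0$. Then $\mathcal L_0^*\lambda=q_{\rm red}+\Gamma_0\chi=q_{\rm red}+\chi\langle\sigma,\lambda\rangle$, as required.

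If $1-B_0=0$, set $\lambda=\psi_0$. Then $\mathcal L_0^*\psi_0-\chi\langle\sigma,\psi_0\rangle=\chi-B_0\chi=0$, so $\psi_0$ lies in the kernel; it is nontrivial by hypothesis. For the inhomogeneous problem, any solution must satisfy $(1-B_0)\beta=A_0$, which reads $0=A_0$; so $A_0=0$ is necessary. Conversely, if $A_0=0$, then $\lambda=\lambda_{\rm red}+c\psi_0$ is a solution for every $c\in\mathbb R$. Indeed, $\langle\sigma,\lambda\rangle=A_0+cB_0=c$ and $\mathcal L_0^*\lambda=q_{\rm red}+c\chi$.

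The only real obstacle is functional-analytic bookkeeping. One must ensure that the Volterra-type solution operator $R_0$ is a genuine inverse on the chosen spaces. One must also ensure that $\sigma$ is defined through $\int(a\lambda'-m\lambda)$ rather than through the distributional form, which would require $a\in W^{1,1}$. Once $R_0$ is shown to be bijective from $L^1$ onto $X_{\rm adj}$ and $\sigma$ is bounded on $X_{\rm adj}$, the rest is the standard Sherman--Morrison argument for rank-one perturbations.
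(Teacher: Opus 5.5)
Your proof is correct and follows the paper's argument: apply $R_0$ to obtain $\lambda=\lambda_{\rm red}+\langle\sigma,\lambda\rangle\psi_0$, pair with $\sigma$ to get $(1-B_0)\langle\sigma,\lambda\rangle=A_0$, and read off both cases. Several steps the paper leaves implicit are also filled in correctly: the explicit Volterra formula showing $R_0:L^1\to X_{\rm adj}$ is a bijective inverse of $\mathcal L_0^*$, boundedness of $\sigma$ on $X_{\rm adj}$, the direct existence check in the nondegenerate case, and sufficiency of $A_0=0$ via the family $\lambda_{\rm red}+c\psi_0$.
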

Let \(y:=\partial_E x_E|_{E=E^*}\), so
\(\Phi_u'(E^*)=\int_\Omega\chi y\,dl\), and
\(g^*y+a=-g^*x^*\int_{l_0}^{l}\partial_E((\mu(E^*,\xi)+u(\xi))/g^*(\xi))\,d\xi\),
whence \((g^*y+a)(l_0)=0\).
\begin{theorem}[Forward--adjoint identity]
\label{thm:main_forward_adjoint_identity}
With \(\psi_0=R_0\chi\), \(\mathcal L_0^*\psi_0=\chi\), \(\psi_0(l_m)=0\),
\[
    B_0
    =
    \langle\sigma,\psi_0\rangle
    =
    \Phi_u'(E^*),
\]
and consequently \(1-B_0=0\iff1-\Phi_u'(E^*)=0\).
\end{theorem}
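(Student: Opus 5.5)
The plan is a duality argument. Test the forward sensitivity equation for \(y=\partial_E x_E|_{E=E^*}\) against \(\psi_0\). The adjoint equation for \(\psi_0\) will then turn \(\int_\Omega\chi y\,dl=\Phi_u'(E^*)\) into the co-load \(\langle\sigma,\psi_0\rangle\).

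\textbf{Step 1 (forward sensitivity equation).} Differentiate the frozen stationary equation \((g_Ex_E)'=-(\mu_E+u)x_E\) with respect to \(E\) at \(E^*\). This is justified by \Cref{thm:main_closure_derivative}, which gives \(C^1\) dependence and the explicit formula for \(\partial_E x_E\). The result is
\[
(g^*y+a)'=-(\mu^*+u)y-m ,
\]
with boundary value \((g^*y+a)(l_0)=0\), exactly as recorded before the statement. The boundary value comes from the fixed flux: \(g(E,l_0)x_E(l_0)=p\) for all \(E\). Write \(z:=g^*y+a\). Since \(z\) is \(g^*x^*\) times an integral, \(z\in W^{1,\infty}(\Omega)\).

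\textbf{Step 2 (duality pairing).} By \Cref{ass:model_coefficients} and \(\chi\in W^{1,\infty}\), the solution \(\psi_0=R_0\chi\) is explicit and Lipschitz, with \(\psi_0(l_m)=0\). Compute
\[
\Phi_u'(E^*)=\int_\Omega\chi y\,dl=\int_\Omega\big(-g^*\psi_0'+(\mu^*+u)\psi_0\big)y\,dl .
\]
Replace \(g^*y\) by \(z-a\) in the first term and integrate \(-\int z\psi_0'\) by parts. The boundary terms are \(-[z\psi_0]_{l_0}^{l_m}\), and both vanish: \(\psi_0(l_m)=0\) and \(z(l_0)=0\). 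This is the key use of the boundary correction; with a density boundary condition the term \(a(l_0)\psi_0(l_0)\) would survive. What remains is
\[
\int_\Omega z'\psi_0\,dl+\int_\Omega a\psi_0'\,dl+\int_\Omega(\mu^*+u)y\psi_0\,dl .
\]
Substituting Step 1 for \(z'\) cancels the \((\mu^*+u)y\psi_0\) terms and leaves
\[
\int_\Omega\big(a\psi_0'-m\psi_0\big)\,dl=\langle\sigma,\psi_0\rangle=B_0 .
\]
The equivalence \(1-B_0=0\iff1-\Phi_u'(E^*)=0\) is then immediate.

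\textbf{Main obstacle.} The delicate point is regularity justifying the integration by parts. Here \(u\) is only \(BV\), so \(\mu^*+u\) and hence \(z'\) lie only in \(L^\infty\). However, \(z\) and \(\psi_0\) are in \(W^{1,\infty}\), which suffices for the product rule in \(W^{1,1}\). I would verify this from the explicit formulas: \(z=-g^*x^*\int_{l_0}^{l} b_u\,d\xi\) with \(b_u\in L^\infty\), and \(\psi_0(l)=\int_l^{l_m}\chi(s)g^*(s)^{-1}\exp\!\big(-\int_l^s h_u(E^*,\cdot)\big)\,ds\). Both are absolutely continuous with bounded derivatives, so no trace issues arise.
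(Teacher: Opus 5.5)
Your proposal is correct and takes essentially the same route as the paper. Both arguments pair the differentiated flux equation $(g^*y+a)'=-(\mu^*+u)y-m$ with the adjoint solution, integrate by parts using $(g^*y+a)(l_0)=0$ and $\psi_0(l_m)=0$, and read off $\int_\Omega\chi y\,dl=\langle\sigma,\psi_0\rangle$. The only differences are cosmetic: the paper first proves $\int_\Omega y\,\mathcal L_0^*\varphi\,dl=\langle\sigma,\varphi\rangle$ for all $\varphi\in X_{\rm adj}$ and then sets $\varphi=\psi_0$, whereas you substitute $\psi_0$ directly, and your explicit formulas give slightly stronger $W^{1,\infty}$ regularity to justify the integration by parts.
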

\begin{corollary}[Auxiliary discount continuation]
\label{cor:main_discount_continuation}
For \(\mathcal L_r^*\lambda:=-g^*\lambda'+(r+\mu^*+u)\lambda\),
\(R_r:=(\mathcal L_r^*)^{-1}\), \(B_r:=\langle\sigma,R_r\chi\rangle\), one has
\(B_r=B_0+\mathcal O(r)=\Phi_u'(E^*)+\mathcal O(r)\) as \(r\downarrow0\). The optimality system
for \(\mathcal Y\) uses \(r=0\); \(r>0\) is only a resolvent-continuation
parameter.
\end{corollary}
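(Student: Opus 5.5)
The plan is to write $\psi_r:=R_r\chi$ and compare it directly with $\psi_0$. Then
\[
B_r-B_0=\langle\sigma,\psi_r-\psi_0\rangle,
\]
and it suffices to show $\|\psi_r-\psi_0\|_{W^{1,1}(\Omega)}=\mathcal O(r)$. Indeed, $a=\partial_Eg(E^*,\cdot)x^*\in L^\infty$ and $m=\partial_E\mu(E^*,\cdot)x^*\in L^\infty$ by \Cref{ass:model_coefficients} and \Cref{prop:main_stationary_profile_closure}, so
\[
|\langle\sigma,\phi\rangle|\le \|a\|_{L^\infty}\|\phi'\|_{L^1}+\|m\|_{L^\infty}\|\phi\|_{L^1}
\]
is a bounded functional on $W^{1,1}$. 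Once this is established, \Cref{thm:main_forward_adjoint_identity} gives $B_0=\Phi_u'(E^*)$, and the asserted expansion follows.

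First, I will solve $\mathcal L_r^*\lambda=f$, $\lambda(l_m)=0$, explicitly by integrating backward from $l_m$. With $k_r:=(r+\mu^*+u)/g^*$, the equation reads $\lambda'-k_r\lambda=-f/g^*$, and so
\[
\lambda(l)=\int_l^{l_m}\frac{f(\xi)}{g^*(\xi)}\,e^{-\int_l^{\xi}k_r}\,d\xi .
\]
Since $g^*\ge g_m$ and $k_r\ge0$, this shows $R_r$ is well defined and bounded from $L^\infty$ (indeed from $L^1$) into $X_{\rm adj}$. The bound is uniform for $r\in[0,1]$:
\[
\|R_rf\|_{L^\infty}\le \|f\|_{L^1}/g_m,
\]
and the derivative bound is obtained from the equation itself.

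Next, the difference $\delta:=\psi_r-\psi_0$ satisfies
\[
\mathcal L_0^*\delta=-r\psi_r,\qquad \delta(l_m)=0,
\]
that is, $\delta=-rR_0\psi_r$. Using $\|\psi_r\|_{L^\infty}\le\|\chi\|_{L^1}/g_m$ uniformly in $r$, this gives $\|\delta\|_{L^\infty}\le Cr$. For the derivative, the equation gives
\[
\delta'=\bigl((\mu^*+u)\delta+r\psi_r\bigr)/g^*,
\]
so $\|\delta'\|_{L^\infty}\le Cr$ as well. Hence $\|\delta\|_{W^{1,1}}\le Cr$, and therefore $|B_r-B_0|\le Cr$.

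Finally, I will record that the optimality system (\Cref{thm:main_lagrangian_adjoint}) involves only $\mathcal L_0^*$, because $\mathcal Y$ is undiscounted; $r$ enters nowhere except in this resolvent family. The only step needing care is that $\sigma$ involves $\lambda'$, which is why the estimate must be carried out in $W^{1,1}$ rather than $L^\infty$. The explicit derivative identity above handles this, so I do not expect a genuine obstacle.
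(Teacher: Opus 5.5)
Your proposal is correct and follows the paper's proof in structure. Both arguments bound $R_r\chi-R_0\chi$ by $\mathcal O(r)$ in $W^{1,1}$, with the derivative part read off from the ODE, then use that $\sigma$ is bounded on $W^{1,1}$ because $a,m\in L^\infty$, and conclude with the forward--adjoint identity $B_0=\Phi_u'(E^*)$.

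The only variation is in the zeroth-order estimate. You obtain it from the resolvent identity $\psi_r-\psi_0=-rR_0\psi_r$, whereas the paper compares the explicit kernels via $|e^{-rA}-1|\le rA$ with $A(l,s)=\int_l^s d\xi/g^*(\xi)$. Both routes give the same operator bound $\|R_r-R_0\|_{\mathcal L(L^1,X_{\rm adj})}\le Cr$.
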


To elucidate the functional analytic framework, \Cref{fig:adjoint_loop} maps the abstract Sherman-Morrison reduction established in Theorem~\ref{thm:main_sherman_morrison} into an equivalent control-theoretic block diagram. This signal-flow representation characterizes the exact alignment between the rank-one forward-adjoint identity and the closed-loop system structure.

\subsection{Switching geometry and diagnostics}
\label{subsec:main_switching}
With \(S:=\pi-\lambda\), \(S_{\rm red}:=\pi-\lambda_{\rm red}\),
\(\Gamma_0:=A_0/(1-B_0)\), \eqref{eq:main_SM_solution} gives
\[
S(l)=S_{\rm red}(l)-\Gamma_0\psi_0(l).
\]
\begin{theorem}[Pointwise maximum condition and bang--bang law]
\label{thm:main_bang_bang}
At a regular local maximizer with inactive \(BV\)-constraint,
\(u^*(l)\in\operatorname*{arg\,max}_{0\le v\le u_{\max}}v\,x^*(l)S(l)\) a.e., and
since \(x^*>0\),
\begin{equation}
    \label{eq:main_bang_bang}
    u^*(l)=
    \begin{cases}
        u_{\max}, & S(l)>0,\\[1mm]
        0, & S(l)<0,
    \end{cases}
    \qquad
    \text{a.e.}
\end{equation}
\end{theorem}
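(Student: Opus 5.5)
The plan is to derive the pointwise law from the control variational inequality \eqref{eq:main_control_VI} of \Cref{thm:main_lagrangian_adjoint}. At a regular local maximizer with $\operatorname{TV}(u^*)<M_u$, that inequality holds for every measurable $v$ with $0\le v\le u_{\max}$. Here ``regular'' means closure nonresonance together with the constraint qualification. The switching function is $S=\pi-\lambda$, so the inequality reads
\[
\int_\Omega x^*(l)S(l)\big(v(l)-u^*(l)\big)\,dl\le 0 .
\]
Under $1-\Phi_u'(E^*)\ne0$, \Cref{thm:main_forward_adjoint_identity} gives $1-B_0\ne0$. Then \Cref{thm:main_sherman_morrison} identifies $\lambda$ uniquely, and $S=S_{\rm red}-\Gamma_0\psi_0$ is an honest $L^\infty$ function. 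So the integrand is well defined, and $x^*S\in L^\infty(\Omega)$.

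Next I localize with needle-type variations. Fix any measurable $A\subset\Omega$ and any constant $w\in[0,u_{\max}]$, and test with $v:=w\mathbf 1_A+u^*\mathbf 1_{\Omega\setminus A}$. This $v$ is admissible for the pointwise box constraint. The inequality then becomes $\int_A x^*S(w-u^*)\,dl\le0$ for every measurable $A$, which forces $x^*(l)S(l)(w-u^*(l))\le0$ for a.e.\ $l$. Let $w$ range over a countable dense subset of $[0,u_{\max}]$ and discard the union of the exceptional null sets. By continuity in $w$ we get, for a.e.\ $l$, that $x^*(l)S(l)u^*(l)\ge x^*(l)S(l)w$ for all $w\in[0,u_{\max}]$. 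This is exactly $u^*(l)\in\arg\max_{0\le v\le u_{\max}} v\,x^*(l)S(l)$.

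Finally, \Cref{prop:main_stationary_profile_closure} gives $x^*=x_{E^*,u^*}>0$ a.e., since it is an exponential times $p/g>0$. So the sign of $x^*S$ equals the sign of $S$. The map $v\mapsto v\,x^*(l)S(l)$ is linear on $[0,u_{\max}]$. When $S(l)>0$ its unique maximizer is $u_{\max}$, and when $S(l)<0$ it is $0$, which yields \eqref{eq:main_bang_bang}. On $\{S=0\}$ nothing is asserted.

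The main obstacle is the claim that the variations $v$ may be arbitrary bounded measurable functions. The variational inequality in \Cref{thm:main_lagrangian_adjoint} is stated for all $0\le v\le u_{\max}$, but these $v$ need not lie in $\mathcal U_{\rm stat}$, since $\operatorname{TV}(v)$ may exceed $M_u$. To justify this I would use the inactive constraint. Choose $A$ to be a finite union of intervals, so that $\operatorname{TV}(u^*+\varepsilon(v-u^*))\le \operatorname{TV}(u^*)+\varepsilon\operatorname{TV}(v-u^*)<M_u$ for small $\varepsilon$. The directional derivative inequality then holds for such $v$. Step functions are dense in $L^1$ with the box constraint preserved, and $x^*S\in L^\infty$, so the inequality extends to all measurable $A$. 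This density argument is what upgrades the first-order condition to the pointwise maximum principle.
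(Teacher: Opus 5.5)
Your proposal is correct and follows the paper's route. The paper invokes decomposability of the box $\{0\le v\le u_{\max}\}$ to localize \eqref{eq:main_control_VI} to the pointwise $\operatorname*{arg\,max}$ condition and then uses $x^*>0$ to read off the sign of $S$; your needle variations and countable-dense-set argument make that step explicit, your use of the inactive $\operatorname{TV}$ constraint with density of finite unions of intervals justifies arbitrary box-valued $v$ (a point the paper passes over), and your detour through \Cref{thm:main_sherman_morrison} is unnecessary since $\lambda\in X_{\rm adj}\subset L^\infty(\Omega)$ already gives $S\in L^\infty(\Omega)$.
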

Let \(S_{\rm red}\in C^1(\Omega)\) have a unique simple zero
\(l_{\rm red}\in(l_0,l_m)\), \(S_{\rm red}'(l_{\rm red})\ne0\). For
\(\delta>0\), \(\mathcal N_\delta:=(l_{\rm red}-\delta,l_{\rm red}+\delta)\cap\Omega\),
\(m_{\rm out}:=\inf_{\Omega\setminus\mathcal N_\delta}|S_{\rm red}|\),
\(m_1:=\inf_{\mathcal N_\delta}|S_{\rm red}'|\),
\(\rho_\psi:=|\Gamma_0|\|\psi_0\|_{L^\infty}\),
\(\rho_{\psi,1}:=|\Gamma_0|\|\psi_0'\|_{L^\infty(\mathcal N_\delta)}\).
\begin{theorem}[Single-threshold preservation]
\label{thm:main_single_threshold}
If \(\rho_\psi<m_{\rm out}\) and \(\rho_{\psi,1}<m_1\), then \(S\) has exactly
one zero \(l_*\in\Omega\), and
\begin{equation}
    \label{eq:main_threshold_shift}
    l_*-l_{\rm red}
    =
    \frac{\Gamma_0\psi_0(l_{\rm red})}
    {S_{\rm red}'(l_{\rm red})}
    +
    o(|\Gamma_0|),
    \qquad
    \Gamma_0\to0.
\end{equation}
\end{theorem}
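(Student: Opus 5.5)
The argument is a perturbation of a simple zero of $S_{\rm red}$ by the additive term $-\Gamma_0\psi_0$, using the decomposition $S=S_{\rm red}-\Gamma_0\psi_0$ obtained from \eqref{eq:main_SM_solution}. It has three steps: exclude zeros outside $\mathcal N_\delta$, prove existence and uniqueness inside $\mathcal N_\delta$ by strict monotonicity, and expand to first order.

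\emph{Regularity.} Since $\mathcal L_0^*\psi_0=\chi$ with $\psi_0(l_m)=0$, we have $\psi_0'=((\mu^*+u)\psi_0-\chi)/g^*$. Hence $\psi_0\in W^{1,\infty}(\Omega)$, and it is continuous on $\Omega$. Together with $S_{\rm red}\in C^1(\Omega)$, this makes $S$ continuous and Lipschitz, with $S'=S_{\rm red}'-\Gamma_0\psi_0'$ a.e.

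\emph{No zeros outside the window.} For $l\in\Omega\setminus\mathcal N_\delta$,
\[
|S(l)|\ge|S_{\rm red}(l)|-|\Gamma_0||\psi_0(l)|\ge m_{\rm out}-\rho_\psi>0,
\]
so every zero of $S$ lies in $\mathcal N_\delta$.

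\emph{Exactly one zero inside.} Inside $\mathcal N_\delta$ we have $m_1>0$. Because $S_{\rm red}'$ is continuous and nonvanishing on the interval $\mathcal N_\delta$, it has constant sign there; say $\epsilon:=\operatorname{sign}S_{\rm red}'$. Then a.e. on $\mathcal N_\delta$,
\[
\epsilon S'\ge m_1-\rho_{\psi,1}>0 .
\]
Since $S$ is absolutely continuous, $\epsilon S$ is strictly increasing on $\mathcal N_\delta$, so $S$ has at most one zero there. For existence, consider $S_{\rm red}$ on $\mathcal N_\delta$. It vanishes only at $l_{\rm red}$ and is monotone there, so it has opposite signs on the two sides of $l_{\rm red}$.

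I expect the endpoints to be the main obstacle, because if $\mathcal N_\delta$ reaches $\partial\Omega$ the outside bound is not available there. Where an endpoint of $\mathcal N_\delta$ is an interior point of $\Omega$, it lies in the closure of $\Omega\setminus\mathcal N_\delta$. There $|S_{\rm red}|\ge m_{\rm out}$ by continuity, and $S$ keeps the sign of $S_{\rm red}$ because $\rho_\psi<m_{\rm out}$. If $\mathcal N_\delta$ reaches $l_0$ or $l_m$, I would apply the same argument at a nearby interior point. If necessary I would shrink $\delta$ slightly, which only increases $m_{\rm out}$ and $m_1$; more precisely, I would use the monotone bound $|S_{\rm red}(l)|\ge m_1|l-l_{\rm red}|$ together with $\rho_\psi<m_{\rm out}$. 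Alternatively one can use $S(l_{\rm red})=-\Gamma_0\psi_0(l_{\rm red})$, which satisfies $|S(l_{\rm red})|\le\rho_\psi$, and the slope bound to locate a sign change within distance $\rho_\psi/(m_1-\rho_{\psi,1})$. In either case, the intermediate value theorem gives exactly one zero $l_*$.

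\emph{Shift formula.} Expanding $0=S(l_*)$ about $l_{\rm red}$ gives
\[
0=S_{\rm red}'(l_{\rm red})(l_*-l_{\rm red})+o(|l_*-l_{\rm red}|)-\Gamma_0\psi_0(l_{\rm red})-\Gamma_0\big(\psi_0(l_*)-\psi_0(l_{\rm red})\big).
\]
The slope estimate gives $|l_*-l_{\rm red}|\le|\Gamma_0|\|\psi_0\|_\infty/(m_1-\rho_{\psi,1})=O(|\Gamma_0|)$. Hence the remainder term $o(|l_*-l_{\rm red}|)$ is $o(|\Gamma_0|)$. By the Lipschitz continuity of $\psi_0$, the last term is $|\Gamma_0|\,O(|l_*-l_{\rm red}|)=O(\Gamma_0^2)$. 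Dividing by $S_{\rm red}'(l_{\rm red})\ne0$ yields \eqref{eq:main_threshold_shift}.
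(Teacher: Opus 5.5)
Your interior argument is the paper's: the outer margin excludes zeros off $\mathcal N_\delta$, the derivative margin makes $S$ strictly monotone on $\mathcal N_\delta$, the sign change of $S_{\rm red}$ gives existence, and expanding $0=S(l_*)$ at $l_{\rm red}$ gives \eqref{eq:main_threshold_shift}. Your additions are correct and sharpen the paper's sketch. Because $\psi_0'=((\mu^*+u)\psi_0-\chi)/g^*$ is only $L^\infty$ (it jumps with $u$), a lower bound on $|S'|$ alone does not force monotonicity; fixing the sign via $\epsilon S'\ge m_1-\rho_{\psi,1}$ is what is needed. Likewise, the a priori bound $|l_*-l_{\rm red}|\le\rho_\psi/(m_1-\rho_{\psi,1})$, together with the Lipschitz bound on $\psi_0$, is exactly what makes the paper's one-line expansion rigorous.

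The genuine gap is the case $l_{\rm red}-\delta<l_0$, and none of your patches closes it.
\begin{itemize}
\item Shrinking $\delta$ enlarges $\Omega\setminus\mathcal N_\delta$, so $m_{\rm out}$ can only decrease (you have the direction backwards), and $\rho_\psi<m_{\rm out}$ need not survive.
\item The bound $|S_{\rm red}(l)|\ge m_1|l-l_{\rm red}|$ yields a point left of $l_{\rm red}$ where $S$ has the sign of $S_{\rm red}$ only if $l_{\rm red}-l_0>\rho_\psi/m_1$.
\item The sign change within distance $\rho_\psi/(m_1-\rho_{\psi,1})$ of $l_{\rm red}$ may fall to the left of $l_0$.
\end{itemize}
In fact existence fails there. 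On $\Omega=[1,2]$ take $S_{\rm red}(l)=l-1-\varepsilon$, $\psi_0(l)=2-l$ (nonnegative and vanishing at $l_m$, as $R_0\chi$ is), $\Gamma_0=-2\varepsilon$, and $\delta=\tfrac12$. Then $\mathcal N_\delta=[1,\tfrac32+\varepsilon)$, $m_{\rm out}=\tfrac12$, $m_1=1$, and $\rho_\psi=\rho_{\psi,1}=2\varepsilon$, so both hypotheses hold for $\varepsilon<\tfrac14$. Yet $S(l)=(1-2\varepsilon)l-1+3\varepsilon\ge\varepsilon>0$ on $\Omega$.

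The paper's sentence ``$S_{\rm red}$ changes sign across $l_{\rm red}$ and the perturbation is below the outer margin'' tacitly assumes $\Omega\setminus\mathcal N_\delta$ has points on both sides of $l_{\rm red}$, so it has the same hole. The right end is harmless, since $\psi_0(l_m)=0$ gives $S(l_m)=S_{\rm red}(l_m)$. At the left end an extra hypothesis is needed, for example:
\begin{itemize}
\item $l_{\rm red}-\delta\ge l_0$; then $l_{\rm red}-\delta\in\Omega\setminus\mathcal N_\delta$ fixes the sign of $S$ on the left and $S(l_m)=S_{\rm red}(l_m)$ fixes it on the right, after which your argument applies; or
\item $|\Gamma_0\psi_0(l_0)|<|S_{\rm red}(l_0)|$.
\end{itemize}
The asymptotic formula is unaffected, since for fixed $S_{\rm red},\psi_0$ the latter condition holds once $|\Gamma_0|$ is small.
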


The robustness of the harvesting strategy under small perturbations is geometrically verified in \Cref{fig:single_threshold}. As demonstrated, the perturbed switching function $S(l)$ remains strictly bounded within the perturbation envelope defined by $\rho_\psi$. Driven by the transversal crossing condition $m_{\rm out}$ of the unperturbed signal $S_{\rm red}(l)$, the environmental feedback merely shifts the threshold from $l_{\rm red}$ to $l_*$ without creating spurious switching points, validating the single-threshold preservation established in Theorem~\ref{thm:main_single_threshold}.

\begin{theorem}[Tangency birth of harvest windows]
\label{thm:main_tangency}
Let \(S_{\rm red},\psi_0\in C^2(\Omega)\). A double switching point satisfies
\(S(l_t)=S'(l_t)=0\), equivalently
\(S_{\rm red}(l_t)=\Gamma_{\rm tan}\psi_0(l_t)\),
\(S_{\rm red}'(l_t)=\Gamma_{\rm tan}\psi_0'(l_t)\). If
\(\psi_0(l_t)\psi_0'(l_t)\ne0\), then
\begin{equation}
    \label{eq:main_tangency_eliminated}
    S_{\rm red}(l_t)\psi_0'(l_t)
    -
    S_{\rm red}'(l_t)\psi_0(l_t)=0,
    \qquad
    \Gamma_{\rm tan}
    =
    \frac{S_{\rm red}(l_t)}{\psi_0(l_t)}.
\end{equation}
If additionally \(S_{\rm red}''(l_t)-\Gamma_{\rm tan}\psi_0''(l_t)\ne0\) and
\(\psi_0(l_t)\ne0\), then \(F(l,\Gamma):=S_{\rm red}(l)-\Gamma\psi_0(l)\) has a
nondegenerate fold,
\begin{equation}
    \label{eq:main_tangency_branches}
    l_\pm(\Gamma)
    =
    l_t
    \pm
    \left[
        \frac{2\psi_0(l_t)(\Gamma-\Gamma_{\rm tan})}
        {S_{\rm red}''(l_t)-\Gamma_{\rm tan}\psi_0''(l_t)}
    \right]^{1/2}
    +
    o(|\Gamma-\Gamma_{\rm tan}|^{1/2}),
\end{equation}
and if \(S>0\) on \((l_-(\Gamma),l_+(\Gamma))\), then \(u^*=u_{\max}\) there:
the two switches bound a harvest window.
\end{theorem}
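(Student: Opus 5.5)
The tangency theorem is a two-variable fold computation for the scalar function \(F(l,\Gamma):=S_{\rm red}(l)-\Gamma\psi_0(l)\), which is the switching function \(S=F(\cdot,\Gamma_0)\) from the Sherman--Morrison decomposition. I will first handle the elimination identity \eqref{eq:main_tangency_eliminated}, then the local fold expansion \eqref{eq:main_tangency_branches} in the spirit of \Cref{thm:main_fold_resonance}, and finally the harvest-window conclusion via \Cref{thm:main_bang_bang}.

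\emph{Elimination.} A double switching point at parameter \(\Gamma_{\rm tan}\) means \(F(l_t,\Gamma_{\rm tan})=\partial_lF(l_t,\Gamma_{\rm tan})=0\). Written out, these are \(S_{\rm red}(l_t)=\Gamma_{\rm tan}\psi_0(l_t)\) and \(S_{\rm red}'(l_t)=\Gamma_{\rm tan}\psi_0'(l_t)\). Since \(\psi_0(l_t)\ne0\), the first equation gives \(\Gamma_{\rm tan}=S_{\rm red}(l_t)/\psi_0(l_t)\). Substituting this into the second and multiplying by \(\psi_0(l_t)\) gives the Wronskian-type condition \(S_{\rm red}(l_t)\psi_0'(l_t)-S_{\rm red}'(l_t)\psi_0(l_t)=0\). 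The hypothesis \(\psi_0'(l_t)\ne0\) is needed only for the symmetric expression \(\Gamma_{\rm tan}=S_{\rm red}'(l_t)/\psi_0'(l_t)\); otherwise the step is purely algebraic.

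\emph{Fold expansion.} Set \(\kappa:=\partial_{ll}F(l_t,\Gamma_{\rm tan})=S_{\rm red}''(l_t)-\Gamma_{\rm tan}\psi_0''(l_t)\ne0\) and \(\partial_\Gamma F(l_t,\Gamma_{\rm tan})=-\psi_0(l_t)\ne0\). Because \(F\) is \(C^2\) in \(l\) and affine in \(\Gamma\), Taylor's theorem with Peano remainder gives
\[
F(l,\Gamma)=-\psi_0(l_t)(\Gamma-\Gamma_{\rm tan})+\tfrac12\kappa(l-l_t)^2+R(l,\Gamma),
\]
where \(R=o(|l-l_t|^2)+\mathcal O(|\Gamma-\Gamma_{\rm tan}||l-l_t|)\) uses only the continuity of \(\psi_0'\). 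I will rescale by \(\varepsilon:=|\Gamma-\Gamma_{\rm tan}|^{1/2}\) and \(l=l_t+\varepsilon z\), restricting to the side where the radicand \(2\psi_0(l_t)(\Gamma-\Gamma_{\rm tan})/\kappa\) is positive. Dividing by \(\varepsilon^2\) gives \(G(z,\varepsilon)=\tfrac12\kappa z^2-c+o(1)\), with \(c:=\psi_0(l_t)\operatorname{sign}(\Gamma-\Gamma_{\rm tan})\) and \(c/\kappa>0\), so the limit equation has the roots \(z=\pm\sqrt{2c/\kappa}\).

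\emph{Main obstacle.} The difficulty is that \(F\) is only \(C^2\), so \(G\) is merely continuous at \(\varepsilon=0\) and the implicit function theorem does not apply directly. I will argue as follows.
\begin{enumerate}[label=(\roman*)]
\item \(\partial_zG=\varepsilon^{-1}\partial_lF(l_t+\varepsilon z,\Gamma)=\kappa z+o(1)\) uniformly for \(z\) in compacts, since \(\partial_lF\) is \(C^1\) in \(l\) and \(\partial_lF(l_t,\Gamma)=\mathcal O(\varepsilon^2)\).
\item Hence \(G(\cdot,\varepsilon)\) is strictly monotone near each \(\pm\sqrt{2c/\kappa}\) for small \(\varepsilon\), and changes sign there.
\item Uniqueness of the two roots in a neighbourhood of \(l_t\) follows from the uniform lower bound \(|\partial_lF|\ge\tfrac12|\kappa||l-l_t|\) away from \(l_t\), together with \(F(l_t,\Gamma)\ne0\) for \(\Gamma\ne\Gamma_{\rm tan}\).
\end{enumerate}
Undoing the scaling yields \eqref{eq:main_tangency_branches}. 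On the opposite side of \(\Gamma_{\rm tan}\), \(F\) has a strict sign near \(l_t\), so no zeros are present there.

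\emph{Harvest window.} If \(S=F(\cdot,\Gamma_0)>0\) on \((l_-,l_+)\), then \Cref{thm:main_bang_bang}, using \(x^*>0\), gives \(u^*=u_{\max}\) a.e. there. The interval is therefore a harvest window bounded by the two simple switches \(l_\pm\).
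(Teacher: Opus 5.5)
Your proposal is correct and follows the same route as the paper: you eliminate $\Gamma_{\rm tan}$ from $F=\partial_lF=0$, Taylor-expand $F$ at $(l_t,\Gamma_{\rm tan})$ using $\partial_\Gamma F=-\psi_0(l_t)\ne0$ and $\partial_{ll}F\ne0$ to read off the square-root branches, and then invoke the bang--bang law on $(l_-,l_+)$. Your rescaling and monotonicity argument makes rigorous the root existence that the paper compresses into ``Taylor expansion gives''; for step (iii), the cleaner justification is that $\partial_{ll}F(\cdot,\Gamma)$ keeps the sign of $\kappa$ near $l_t$, so $F(\cdot,\Gamma)$ is strictly convex or concave there and has at most two zeros.
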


Let \(\widetilde E\) be a computed closure level,
\(\rho_{\rm cl}:=|\widetilde E-\Phi_h(\widetilde E)|\).
\begin{proposition}[Closure residual certificate]
\label{prop:main_closure_certificate}
If \(H(E)=E-\Phi(E)\), \(H(E^*)=0\), \(H'\ge m>0\) on \(I\), and
\(E^*,\widetilde E\in I\), then \(|E^*-\widetilde E|\le\rho_{\rm cl}/m\).
\end{proposition}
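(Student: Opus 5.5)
The plan is a one-line mean value argument applied to \(H\) on the interval \(I\). Assume \(H\) is \(C^1\) on \(I\), which holds for the closure maps by \Cref{thm:main_closure_derivative}. I interpret \(\rho_{\rm cl}\) as the residual \(|H(\widetilde E)|=|\widetilde E-\Phi(\widetilde E)|\), with \(\Phi_h\) read as the same map \(\Phi\) appearing in \(H\). If \(\Phi_h\) is a discretized closure map, one additionally needs \(|\Phi_h(\widetilde E)-\Phi(\widetilde E)|\) to be absorbed into \(\rho_{\rm cl}\); I state the result for the exact residual.

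First, since \(I\) is an interval containing both \(E^*\) and \(\widetilde E\), the closed segment between them lies in \(I\). Since \(H(E^*)=0\), we have
\[
H(\widetilde E)=H(\widetilde E)-H(E^*)=\int_0^1 H'\big(E^*+s(\widetilde E-E^*)\big)\,ds\,(\widetilde E-E^*).
\]
Equivalently, by the mean value theorem, \(H(\widetilde E)=H'(\xi)(\widetilde E-E^*)\) for some \(\xi\) between the two points.

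Second, the hypothesis \(H'\ge m>0\) on \(I\) makes the averaged derivative at least \(m\). Taking absolute values gives
\[
\rho_{\rm cl}=|H(\widetilde E)|\ge m\,|\widetilde E-E^*|,
\]
and dividing by \(m\) yields \(|E^*-\widetilde E|\le\rho_{\rm cl}/m\).

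The only delicate point is regularity. If \(H\) is merely Lipschitz, with \(H'\ge m\) holding a.e., the integral form above still applies because \(H\) is absolutely continuous. Alternatively, one can note that \(H-m\,\mathrm{id}\) is nondecreasing on \(I\) and compare its values at \(E^*\) and \(\widetilde E\), which gives the same inequality. Either way, a sign condition \(s_{\rm cl}H'\ge\gamma_{\rm cl}\), as in \Cref{ass:model_nonresonant_sheet}, gives the analogous bound with \(m=\gamma_{\rm cl}\).
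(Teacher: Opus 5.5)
Your argument is correct and matches the paper's proof: the mean value theorem for \(H\) between \(E^*\) and \(\widetilde E\), together with \(H(E^*)=0\) and \(H'\ge m\), gives \(\rho_{\rm cl}=|H(\widetilde E)|\ge m|\widetilde E-E^*|\). You also point out that \(\rho_{\rm cl}\) has to be read as the exact residual \(|\widetilde E-\Phi(\widetilde E)|\), not one computed with a discrete \(\Phi_h\); the paper's proof uses this identification without stating it.
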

Let \(B_h(0):=\langle\sigma_h,\psi_{0,h}\rangle_h\) and \(\Phi_h'(\widetilde E)\)
be the discrete closure derivative, and
\(\mathcal D_{\rm FA}^h:=B_h(0)-\Phi_h'(\widetilde E)\).
\begin{proposition}[Forward--adjoint diagnostic]
\label{prop:main_FA_diagnostic}
If \(\sigma_h\to\sigma\), \(\psi_{0,h}\to\psi_0\),
\(\Phi_h'(\widetilde E)\to\Phi_u'(E^*)\), then
\(\mathcal D_{\rm FA}^h\to B_0-\Phi_u'(E^*)=0\). If the boundary correction is
omitted and \(a\in W^{1,1}\), then formally
\(\mathcal D_{\rm FA}^{h,\rm int}\approx a_h(l_0)\psi_{0,h}(l_0)\), so a
persistent nonzero residual of this size signals an omitted or mis-signed
fixed-flux boundary correction.
\end{proposition}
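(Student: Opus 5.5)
The proof has two parts. The first is a continuity argument for the discrete diagnostic. The second is a formal integration by parts that isolates the boundary term hidden in the co-load \(\sigma\).

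\textbf{First assertion.} Write
\[
\mathcal D_{\rm FA}^h=\big(\langle\sigma_h,\psi_{0,h}\rangle_h-\langle\sigma,\psi_0\rangle\big)+\big(B_0-\Phi_u'(E^*)\big)+\big(\Phi_u'(E^*)-\Phi_h'(\widetilde E)\big).
\]
The middle bracket vanishes by \Cref{thm:main_forward_adjoint_identity}. The last bracket tends to zero by hypothesis. For the first bracket I will interpret \(\sigma_h\to\sigma\) and \(\psi_{0,h}\to\psi_0\) in a pairing-compatible sense:
\begin{itemize}
\item \(a_h\to a\) and \(m_h\to m\) in \(L^1\), with uniform \(L^\infty\) bounds;
\item \(\psi_{0,h}\to\psi_0\) in \(W^{1,\infty}\), or in \(W^{1,1}\) with uniform Lipschitz bounds.
\end{itemize}
Under this interpretation the bilinear form \((\sigma,\lambda)\mapsto\int(a\lambda'-m\lambda)\) is jointly continuous, so the first bracket tends to zero by the splitting
\[
\langle\sigma_h,\psi_{0,h}\rangle_h-\langle\sigma,\psi_0\rangle=\langle\sigma_h-\sigma,\psi_{0,h}\rangle+\langle\sigma,\psi_{0,h}-\psi_0\rangle+(\text{quadrature consistency}).
\]
Each term is bounded by a product of a uniformly bounded norm and a vanishing norm. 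It follows that \(\mathcal D_{\rm FA}^h\to0\).

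\textbf{Second assertion.} Assume \(a\in W^{1,1}\). Integrating the co-load by parts on \(\Omega\) and using \(\psi_0(l_m)=0\) gives
\[
\langle\sigma,\psi_0\rangle=\int_\Omega a\psi_0'-m\psi_0\,dl=-\int_\Omega(a'+m)\psi_0\,dl-a(l_0)\psi_0(l_0).
\]
This is exactly the distributional representation \(\sigma=-a'-m-a(l_0)\delta_{l_0}\) stated before \Cref{thm:main_lagrangian_adjoint}.

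An "interior-only" discretization keeps \(-(a_h'+m_h)\) and drops the Dirac mass, that is, it omits the fixed-flux boundary correction. It therefore computes
\[
B_h^{\rm int}=-\int(a_h'+m_h)\psi_{0,h}\,dl=\langle\sigma_h,\psi_{0,h}\rangle_h+a_h(l_0)\psi_{0,h}(l_0)+o(1).
\]
Subtracting \(\Phi_h'(\widetilde E)\) and applying the first assertion yields
\[
\mathcal D_{\rm FA}^{h,\rm int}=a_h(l_0)\psi_{0,h}(l_0)+o(1).
\]
If the boundary term is included with the wrong sign, the same computation gives a residual of \(2a_h(l_0)\psi_{0,h}(l_0)\).

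\textbf{Interpretation and main obstacle.} The limit \(a(l_0)\psi_0(l_0)=\partial_Eg(E^*,l_0)\,x^*(l_0)\,\psi_0(l_0)\) is generically nonzero. Indeed \(x^*(l_0)=p/g^*(l_0)>0\), and \(\psi_0=R_0\chi>0\) on \([l_0,l_m)\) by the positive variation-of-constants formula for \(\mathcal L_0^*\) when \(\chi\ge0\), \(\chi\not\equiv0\). So a persistent nonzero residual of this size, failing to decay under refinement, certifies an omitted or mis-signed boundary correction, whereas a correct scheme drives the residual to zero.

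The main obstacle is fixing precise function-space meanings for the discrete convergences, including the consistency of the discrete pairing \(\langle\cdot,\cdot\rangle_h\) with the trace term at \(l_0\). Since the statement is explicitly formal on this point, I would state the convergences as \(W^{1,1}\) or \(L^1\) convergence with uniform bounds and invoke continuity of the trace \(W^{1,1}(\Omega)\to\mathbb R\) to handle \(a_h(l_0)\to a(l_0)\) and \(\psi_{0,h}(l_0)\to\psi_0(l_0)\).
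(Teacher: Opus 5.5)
Your proposal is correct and follows the paper's proof. The first claim comes from consistency of \(B_h(0)\) and \(\Phi_h'(\widetilde E)\) combined with the forward--adjoint identity \(B_0=\Phi_u'(E^*)\). The second comes from the integration by parts \(\langle\sigma,\psi_0\rangle=-\int_\Omega(a'+m)\psi_0\,dl-a(l_0)\psi_0(l_0)\), using \(\psi_0(l_m)=0\), which shows that dropping the Dirac term shifts the residual by \(a(l_0)\psi_0(l_0)\).

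Your explicit convergence hypotheses and the factor-\(2\) residual for a mis-signed correction are sound refinements. One small correction: \(\psi_0=R_0\chi\) need not be positive on all of \([l_0,l_m)\), since it vanishes on any final segment where \(\chi=0\). However, you only use \(\psi_0(l_0)>0\), and that does hold because \(\chi\ge0\) and \(\chi\not\equiv0\).
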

A numerical report should include \(\rho_{\rm cl}\),
\(m_h:=\inf_{I_h}|1-\Phi_h'|\), \(\mathcal D_{\rm FA}^h\), \(1-B_h(0)\),
\(\Gamma_h:=A_h(0)/(1-B_h(0))\), and \(\#\{l:S_h(l)=0\}\).

As illustrated in \Cref{fig:tangency_birth}, the topological creation of a new harvesting window is strictly governed by the fold condition in the switching function. This geometric perspective confirms the bifurcation structure detailed in Theorem~\ref{thm:main_tangency}, showing exactly how varying the parameter $\Gamma$ transitions the system from a strict no-harvest regime into one featuring an interior bang-bang harvesting window.

\section{Results (Proofs)}
\label{sec:proofs}
The frozen-path estimate
\(\|z_{e_1}(t)-z_{e_2}(t)\|_{L^1}\le C_T\int_0^t|e_1-e_2|\) is proved in
Appendix~\ref{app:frozen_flow_stability} and used only in
\Cref{subsec:proof_wellposedness}.
\subsection{Proof of well-posedness and control-system property}
\label{subsec:proof_wellposedness}
\begin{proof}[Proof of \Cref{thm:main_global_wellposedness}]
Fix \(T>0\) and a frozen path \(e\in C^0([0,T];[0,E_T])\); set
\(g_e(t,l):=g(e(t),l)\), \(\mu_e(t,l):=\mu(e(t),l)\). The frozen problem
\(\partial_t z+\partial_l(g_ez)=-(\mu_e+u)z\),
\(g_e(t,l_0)z(t,l_0)=p(t)\), \(z(0)=x_0\) is solved by characteristics: the
backward flow \(\frac{d}{ds}L_e(s;t,l)=g_e(s,L_e)\), \(L_e(t;t,l)=l\), reaches
either \(s=0\) or \(l=l_0\) since \(g_e\ge g_m>0\). With
\(\ell_e(t):=L_e(t;0,l_0)\) and boundary entrance time \(\tau_b\), the
nonconservative form \(\partial_t z+g_e\partial_lz=-(\partial_lg_e+\mu_e+u)z\)
gives
\[
    z(t,l)
    =
    x_0(L_e(0;t,l))
    \exp\!\Big[-\!\int_0^t(\partial_lg_e+\mu_e+u)(s,L_e(s;t,l))\,ds\Big],
    \quad l>\ell_e(t),
\]
\[
    z(t,l)
    =
    \frac{p(\tau_b)}{g_e(\tau_b,l_0)}
    \exp\!\Big[-\!\int_{\tau_b}^t(\partial_lg_e+\mu_e+u)(s,L_e(s;t,l))\,ds\Big],
    \quad l<\ell_e(t).
\]
Since \(x_0\ge0\), \(p\ge0\), and the exponentials are positive, \(z_e\ge0\).
Define \(\mathcal T e(t):=\int_\Omega\chi z_e(t)\,dl\). With
\(M_T:=\|x_0\|_{L^1}+p_{\max}T\), \(E_T:=\|\chi\|_{L^\infty}M_T\), the trace-free
mass estimate gives \(\|z_e(t)\|_{L^1}\le M_T\), hence
\(0\le\mathcal T e(t)\le E_T\).

Moreover, since \(g_e\le C_{E_T}\) (finite propagation speed) and the data
\(x_0,u(t,\cdot)\in BV\) with \(p\in BV_{\rm loc}\), the map
\(t\mapsto z_e(t)\) is Lipschitz into \(L^1(\Omega)\). Indeed, the
characteristic formulae
\eqref{eq:app_initial_branch}--\eqref{eq:app_boundary_branch} express
\(z_e(t,\cdot)\) as compositions of \(BV\) data with uniformly bi-Lipschitz
characteristic maps (whose images move by \(\mathcal O(|t-s|)\) since \(g_e\le C_{E_T}\)),
multiplied by uniformly Lipschitz survival factors, while the moving branch
interface \(\ell_e\) contributes only a strip of measure \(\mathcal O(|t-s|)\); the
\(BV\)-composition estimate of \Cref{lem:app_bv_composition} then gives
\(\|z_e(t)-z_e(s)\|_{L^1}\le C_T|t-s|\). Consequently
\(t\mapsto\mathcal T e(t)=\int_\Omega\chi z_e(t)\) is Lipschitz with a constant
\(\Lambda_T\) depending only on the data bounds, uniformly in \(e\). Hence
\(\mathcal T\) maps $\mathcal K_T:=\{e\in C^0([0,T];[0,E_T]):\operatorname{Lip}(e)\le\Lambda_T\}$
into itself. Since a uniform limit of functions with Lipschitz constant at most
\(\Lambda_T\) and range in the closed interval \([0,E_T]\) again has Lipschitz
constant at most \(\Lambda_T\) and range in \([0,E_T]\), \(\mathcal K_T\) is
closed in \(C^0([0,T])\), hence a complete metric space. By
Appendix~\ref{app:frozen_flow_stability},
\(\|z_{e_1}(t)-z_{e_2}(t)\|_{L^1}\le C_T\int_0^t|e_1-e_2|\), so
\(|\mathcal T e_1(t)-\mathcal T e_2(t)|\le\|\chi\|_{L^\infty}C_T\int_0^t|e_1-e_2|\)
and
\(\|\mathcal T e_1-\mathcal T e_2\|_{C^0([0,T])}\le\|\chi\|_{L^\infty}C_TT
\|e_1-e_2\|_{C^0([0,T])}\). Choosing \(T_0\) with
\(\|\chi\|_{L^\infty}C_{T_0}T_0<1\), \(\mathcal T\) is a contraction on the
complete metric space \(\mathcal K_{T_0}\), giving a unique fixed point
\(E=\mathcal T E\); set \(x:=z_E\), so \(E=\langle\chi,x\rangle\) and \(x\)
solves \eqref{eq:model_weak_solution}.

The a priori bounds \(\|x(t)\|_{L^1}\le\|x_0\|_{L^1}+p_{\max}T\) and
\(0\le E(t)\le\|\chi\|_{L^\infty}(\|x_0\|_{L^1}+p_{\max}T)\) preclude
finite-time blow-up, so \(x\in C^0(\mathbb R_+;L^1_+(\Omega))\). For continuous
dependence with common \(w\),
\(|E(t)-\widetilde E(t)|\le\|\chi\|_{L^\infty}\|x(t)-\widetilde x(t)\|_{L^1}\),
and applying the estimate \eqref{eq:app_frozen_path_stability} with \(e_1=E\), \(e_2=\widetilde E\) (both
Lipschitz, as fixed points lie in \(\mathcal K_T\)),
\[
    \|x(t)-\widetilde x(t)\|_{L^1}
    \le
    C_T\|x_0-\widetilde x_0\|_{L^1}
    +
    C_T\int_0^t\|x(s)-\widetilde x(s)\|_{L^1}\,ds,
\]
and Gronwall gives \eqref{eq:main_continuous_dependence} with
\(C_T(R)=C_Te^{C_TT}\); taking \(x_0=\widetilde x_0\) gives uniqueness.
\end{proof}
\begin{proof}[Proof of \Cref{thm:main_positive_control_system}]
From \Cref{thm:main_global_wellposedness}, \(\phi(t,x_0;w)=x(t,\cdot;x_0,w)\) is
defined for all \(t\ge0\), \(x_0\in X\), \(w\in\mathcal W\), with
\(\phi(0,x_0;w)=x_0\), \(\phi(t,x_0;w)\in X\), continuity in \(t\), and the
Lipschitz bound. If \(w|_{[0,t]}=\widetilde w|_{[0,t]}\), both trajectories
solve the same weak problem on \([0,t]\), so
\(\phi(t,x_0;w)=\phi(t,x_0;\widetilde w)\). For the cocycle, put
\(y(s):=\phi(s+\tau,x_0;w)\); the change of variables \(r=s+\tau\) in
\eqref{eq:model_weak_solution} gives \(y(0)=\phi(\tau,x_0;w)\) and
\(y(s)=\phi(s,\phi(\tau,x_0;w);\delta_\tau w)\), i.e. \eqref{eq:main_cocycle}.
\end{proof}
\subsection{Proofs for the closure map and fold resonance}
\label{subsec:proof_closure}
\begin{proof}[Proof of \Cref{prop:main_stationary_profile_closure}]
For \(y_E:=g(E,\cdot)x_E\), \(y_E'=-\tfrac{\mu(E,l)+u(l)}{g(E,l)}y_E\),
\(y_E(l_0)=p\), so \(y_E=p\,e^{-\int_{l_0}^l(\mu(E,\cdot)+u)/g(E,\cdot)}\) and
\(x_E=y_E/g(E,\cdot)\) is \eqref{eq:model_xE}. Since
\(0\le(\mu+u)/g\le(\|\mu(E,\cdot)\|_\infty+u_{\max})/g_m\),
\[
    0<
    \frac{p}{\|g(E,\cdot)\|_\infty}
    e^{-(\|\mu(E,\cdot)\|_\infty+u_{\max})|\Omega|/g_m}
    \le
    x_E
    \le
    \frac{p}{g_m}.
\]
As \(h_E\in L^\infty\), \(y_E=g_Ex_E\in W^{1,\infty}(\Omega)\); uniqueness is
from the scalar linear equation for \(y_E\); and \(E^*=\Phi_u(E^*)\) iff
\(E^*=\langle\chi,x_{E^*}\rangle\), the stationary equation being the frozen one
at \(E=E^*\).
\end{proof}
\begin{proof}[Proof of \Cref{thm:main_closure_derivative}]
Differentiating
\(\log x_E=\log p-\log g(E,l)-\int_{l_0}^l(\mu(E,\xi)+u(\xi))/g(E,\xi)\,d\xi\) in
\(E\) gives \(\partial_E x_E=x_E[\alpha_u-\int_{l_0}^l b_u]\). Since
\(x_E,\alpha_u,b_u\in L^\infty\) and \(|\Omega|<\infty\), dominated convergence
yields \(\Phi_u'(E)=\int_\Omega\chi\partial_E x_E\,dl\); substituting and
applying Fubini,
\[
    \int_{l_0}^{l_m}\chi x_E\Big(\int_{l_0}^l b_u\,d\xi\Big)dl
    =
    \int_{l_0}^{l_m}b_u(E,\xi)\Big(\int_\xi^{l_m}\chi x_E\,dl\Big)d\xi,
\]
so \(\Phi_u'=\mathcal R_u-\mathcal C_u\). Continuity of \(\Phi_u'\) on bounded
\(E\)-intervals follows from the \(L^\infty\)-continuity of
\(E\mapsto\partial_Eg(E,\cdot),\partial_E\mu(E,\cdot)\)
(\Cref{ass:model_coefficients}) and dominated convergence.
\end{proof}
\begin{proof}[Proof of \Cref{thm:main_fold_resonance}]
If \(\partial_EH(E^*,\eta^*)\ne0\), the implicit-function theorem gives a
\(C^2\) branch with, on differentiating \(H(E(\eta),\eta)=0\),
\(E'(\eta)=-\partial_\eta H/\partial_EH=\partial_\eta\Phi/(1-\partial_E\Phi)\).
At resonance \(\partial_EH(E^*,\eta^*)=0\); writing \(z=E-E^*\),
\(\rho=\eta-\eta^*\), Taylor expansion gives
\[
    H(E^*+z,\eta^*+\rho)
    =
    \partial_\eta H(E^*,\eta^*)\rho
    +
    \tfrac12\partial_{EE}H(E^*,\eta^*)z^2
    +
    o(|\rho|+|z|^2),
\]
so the leading balance \(\partial_\eta H\,\rho+\tfrac12\partial_{EE}H\,z^2=0\)
yields \(z_\pm=\pm[-2\partial_\eta H/\partial_{EE}H\,(\eta-\eta^*)]^{1/2}
+o(|\eta-\eta^*|^{1/2})\), i.e. \eqref{eq:main_fold_branches}.
\end{proof}
\subsection{Proof of existence on the nonresonant sheet}
\label{subsec:proof_existence}
\begin{proof}[Proof of \Cref{thm:main_sheet_continuity}]
For fixed \(E\in[0,E_{\max}]\), with
\(A_i(E,l):=\int_{l_0}^l(\mu(E,\xi)+u_i(\xi))/g(E,\xi)\,d\xi\),
\(|A_1-A_2|\le g_m^{-1}\|u_1-u_2\|_{L^1}\), and \(|e^{-A_1}-e^{-A_2}|\le|A_1-A_2|\),
\(0\le e^{-A_i}\le1\), give
\[
    |\Phi_{u_1}(E)-\Phi_{u_2}(E)|
    \le
    \frac{p\|\chi\|_{L^\infty}|\Omega|}{g_m^2}\|u_1-u_2\|_{L^1}
    =:\rho_{12}.
\]
By the sign-definite margin \eqref{eq:model_sheet_margin},
\(H_{u_2}'=1-\partial_E\Phi_{u_2}\) has constant sign \(s_{\rm cl}\) and
magnitude \(\ge\gamma_{\rm cl}\) on the isolating interval, so if \(E_{u_1}^*\)
lies in it, the mean value theorem gives
\(\gamma_{\rm cl}|E_{u_1}^*-E_{u_2}^*|\le|H_{u_2}(E_{u_1}^*)|
=|\Phi_{u_1}(E_{u_1}^*)-\Phi_{u_2}(E_{u_1}^*)|\le\rho_{12}\), i.e.
\eqref{eq:main_sheet_lipschitz} with \(C_\Phi=p\|\chi\|_{L^\infty}|\Omega|/g_m\).
For \(u_n\to u\) this confinement gives \(E_{u_n}^*\to E_u^*\); then, from
\eqref{eq:model_xE}, \(E_{u_n}^*\to E_u^*\) and \(u_n\to u\) in \(L^1\) give
\(x_{u_n}^*\to x_u^*\) a.e. with \(0\le x_{u_n}^*\le p/g_m\), so dominated
convergence yields \(\|x_{u_n}^*-x_u^*\|_{L^1}\to0\).
\end{proof}
\begin{proof}[Proof of \Cref{thm:main_existence_optimal_policy}]
Let \(\{u_n\}\subset\mathcal U_{\rm stat}\) be maximizing. The bounds
\(0\le u_n\le u_{\max}\), \(\operatorname{TV}(u_n)\le M_u\) make \(\{u_n\}\)
bounded in \(BV\); by compactness in \(L^1\), a subsequence converges to
\(u^*\in L^1\) a.e., and lower semicontinuity of total variation gives
\(u^*\in\mathcal U_{\rm stat}\). By \Cref{thm:main_sheet_continuity},
\(x_{u_n}^*\to x_{u^*}^*\) in \(L^1\), so
\[
    |\mathcal Y(u_n)-\mathcal Y(u^*)|
    \le
    \|\pi\|_\infty\frac{p}{g_m}\|u_n-u^*\|_{L^1}
    +
    \|\pi\|_\infty u_{\max}\|x_{u_n}^*-x_{u^*}^*\|_{L^1}
    \to0,
\]
whence \(u^*\) attains the supremum.
\end{proof}

\subsection{Lagrangian derivation and proof of the adjoint formula}
\label{subsec:proof_adjoint}
\begin{proof}[Proof of \Cref{thm:main_lagrangian_adjoint}]
Take \(x\) in the trace space \(\{x:g(E,\cdot)x\in W^{1,1}(\Omega)\}\), on which
the boundary values \(x(l_0),x(l_m)\) are defined, and use the constrained
Lagrangian with the transport constraint integrated by parts,
\(-\int_\Omega\lambda\,\tfrac{d}{dl}(gx)
=\int_\Omega g(E,l)x\,\lambda'-[\lambda g(E,\cdot)x]_{l_0}^{l_m}\),
\[
\begin{aligned}
    \mathfrak L
    &=
    \int_\Omega \pi ux
    +
    \int_\Omega g(E,l)x\,\lambda'
    -
    [\lambda g(E,\cdot)x]_{l_0}^{l_m}
    \\
    &\quad
    -
    \int_\Omega(\mu(E,l)+u)x\lambda
    -
    \nu(g(E,l_0)x(l_0)-p)
    -
    \beta\Big(E-\int_\Omega\chi x\Big).
\end{aligned}
\]
The \(x\)-variation is
\[
\begin{aligned}
    D_x\mathfrak L[\delta x]
    &=
    \int_\Omega(\pi u+\beta\chi)\delta x
    +
    \int_\Omega g^*\delta x\,\lambda'
    -
    \int_\Omega(\mu^*+u)\delta x\,\lambda
    \\
    &\quad
    -
    \lambda(l_m)g^*(l_m)\delta x(l_m)
    +
    \lambda(l_0)g^*(l_0)\delta x(l_0)
    -
    \nu g^*(l_0)\delta x(l_0);
\end{aligned}
\]
independence of the boundary variations forces \(\lambda(l_m)=0\) and
\(\nu=\lambda(l_0)\), and the interior coefficient
\(\pi u+\beta\chi+g^*\lambda'-(\mu^*+u)\lambda=0\), i.e.
\(\mathcal L_0^*\lambda-\beta\chi=q_{\rm red}\). For the \(E\)-variation, with
\(a=\partial_Eg(E^*,\cdot)x^*\), \(m=\partial_E\mu(E^*,\cdot)x^*\),
\[
    D_E\mathfrak L[\delta E]
    =
    \delta E
    \Big[
        \int_\Omega a\lambda'
        +
        a(l_0)\lambda(l_0)
        -
        \int_\Omega m\lambda
        -
        \nu a(l_0)
        -
        \beta
    \Big],
\]
and \(\nu=\lambda(l_0)\) cancels the boundary pair, leaving
\(\beta=\int_\Omega(a\lambda'-m\lambda)=\langle\sigma,\lambda\rangle\).
Substituting gives \eqref{eq:main_rank_one_adjoint}. Existence of the
multipliers is the Banach-space Lagrange multiplier rule under the stated
surjectivity (the transport block is invertible by \(g^*\ge g_m\); the scalar
block by \(1-\Phi_u'(E^*)\ne0\)). Finally
\(D_u\mathfrak L[\delta u]=\int_\Omega x^*(\pi-\lambda)\delta u\) gives, over the
box \(0\le v\le u_{\max}\), the variational inequality
\eqref{eq:main_control_VI}.
\end{proof}
\begin{proof}[Proof of \Cref{thm:main_sherman_morrison}]
Applying \(R_0=(\mathcal L_0^*)^{-1}\) to \eqref{eq:main_rank_one_adjoint},
\(\lambda=\lambda_{\rm red}+\psi_0\langle\sigma,\lambda\rangle\); pairing with
\(\sigma\), \((1-B_0)\langle\sigma,\lambda\rangle=A_0\). If \(1-B_0\ne0\),
\(\langle\sigma,\lambda\rangle=A_0/(1-B_0)\) and
\(\lambda=\lambda_{\rm red}+\tfrac{A_0}{1-B_0}\psi_0\). If \(1-B_0=0\), then
\((\mathcal L_0^*-\chi\langle\sigma,\cdot\rangle)\psi_0=\chi-\chi B_0=(1-B_0)\chi=0\),
and the compatibility condition is \(A_0=0\).
\end{proof}
\subsection{Proof of the forward--adjoint identity}
\label{subsec:proof_forward_adjoint}
\begin{proof}[Proof of \Cref{thm:main_forward_adjoint_identity}]
Let \(y:=\partial_E x_E|_{E=E^*}\). Differentiating
\(g(E,l)x_E=p\,e^{-\int_{l_0}^l(\mu(E,\xi)+u(\xi))/g(E,\xi)\,d\xi}\) at \(E=E^*\)
gives
\(g^*y+a=-g^*x^*\int_{l_0}^l\partial_E((\mu(E^*,\xi)+u)/g^*)\,d\xi\), so
\((g^*y+a)(l_0)=0\).
Note that
$g^*y+a\in W^{1,1}(\Omega)\cap L^\infty(\Omega)$.
Indeed, the differentiated flux equation gives
\[
    (g^*y+a)'=-(\mu^*+u)y-m
    \quad\text{in }\mathcal D'(l_0,l_m),
\]
and the right-hand side belongs to \(L^1(\Omega)\). Hence integration by parts
against \(\varphi\in X_{\rm adj}\) is justified.

For \(\varphi\in X_{\rm adj}\), testing and integrating by parts,
using \(\varphi(l_m)=0\), \((g^*y+a)(l_0)=0\),
\[
    0=
    -\int_\Omega(g^*y+a)\varphi'
    +
    \int_\Omega(\mu^*+u)y\varphi
    +
    \int_\Omega m\varphi,
\]
i.e. \(\int_\Omega y[-g^*\varphi'+(\mu^*+u)\varphi]=\int_\Omega(a\varphi'-m\varphi)\),
that is \(\int_\Omega y\,\mathcal L_0^*\varphi=\langle\sigma,\varphi\rangle\).
Taking \(\varphi=\psi_0=R_0\chi\), \(\mathcal L_0^*\psi_0=\chi\),
\[
    \Phi_u'(E^*)
    =
    \int_\Omega\chi y
    =
    \int_\Omega y\,\mathcal L_0^*\psi_0
    =
    \langle\sigma,\psi_0\rangle
    =
    B_0.
\]
\end{proof}
\begin{proof}[Proof of \Cref{cor:main_discount_continuation}]
With \(q_r:=(r+\mu^*+u)/g^*\) and
\((R_rf)(l)=\int_l^{l_m}\tfrac{f(s)}{g^*(s)}e^{-\int_l^s q_r}\,ds\), and
\(A(l,s):=\int_l^s d\xi/g^*(\xi)\in[0,|\Omega|/g_m]\),
\(|e^{-rA}-1|\le rA\) gives \(\|R_rf-R_0f\|_{L^\infty}\le Cr\|f\|_{L^1}\). Since
\((R_rf)'=-f/g^*+q_rR_rf\), the derivative component picks up
\(\|q_r\|_{L^\infty}\), so \(\|(R_rf)'-(R_0f)'\|_{L^1}\le Cr\|f\|_{L^1}\) and
\(\|R_r-R_0\|_{\mathcal L(L^1,X_{\rm adj})}\le Cr\) with
\(C=C(g_m,\|\mu^*+u\|_{L^\infty},|\Omega|)\). Hence
\(|B_r-B_0|=|\langle\sigma,(R_r-R_0)\chi\rangle|
\le\|\sigma\|_{X_{\rm adj}^*}Cr\|\chi\|_{L^1}\le Cr\), and by
\Cref{thm:main_forward_adjoint_identity},
\(B_r=\Phi_u'(E^*)+\mathcal O(r)\).
\end{proof}
\subsection{Proofs of the switching and diagnostic results}
\label{subsec:proof_switching}
\begin{proof}[Proof of \Cref{thm:main_bang_bang}]
From \Cref{thm:main_lagrangian_adjoint}, \(\int_\Omega x^*S(v-u^*)\le0\) for
\(0\le v\le u_{\max}\). The box \(\mathcal U_{\rm box}=\{0\le v\le u_{\max}\}\)
is decomposable, so the inequality localizes to
\(u^*(l)\in\operatorname*{arg\,max}_{0\le v\le u_{\max}}v\,x^*(l)S(l)\) a.e.;
since \(x^*>0\), the affine map \(v\mapsto v\,x^*(l)S(l)\) is maximized at
\(v=u_{\max}\) if \(S(l)>0\) and \(v=0\) if \(S(l)<0\), giving
\eqref{eq:main_bang_bang}.
\end{proof}
\begin{proof}[Proof of \Cref{thm:main_single_threshold}]
With \(S=S_{\rm red}-\Gamma_0\psi_0\): on \(\Omega\setminus\mathcal N_\delta\),
\(|S|\ge m_{\rm out}-\rho_\psi>0\), so no zero occurs there; on
\(\mathcal N_\delta\), \(|S'|\ge m_1-\rho_{\psi,1}>0\), so \(S\) is strictly
monotone. Since \(S_{\rm red}\) changes sign across \(l_{\rm red}\) and the
perturbation is below the outer margin, \(S\) has exactly one zero \(l_*\).
Expanding \(0=S(l_*)=S_{\rm red}(l_*)-\Gamma_0\psi_0(l_*)\) at \(l_{\rm red}\)
gives \eqref{eq:main_threshold_shift}.
\end{proof}
\begin{proof}[Proof of \Cref{thm:main_tangency}]
For \(F(l,\Gamma)=S_{\rm red}(l)-\Gamma\psi_0(l)\), a double point is
\(F(l_t,\Gamma_{\rm tan})=0\), \(\partial_lF(l_t,\Gamma_{\rm tan})=0\), i.e.
\(S_{\rm red}(l_t)=\Gamma_{\rm tan}\psi_0(l_t)\),
\(S_{\rm red}'(l_t)=\Gamma_{\rm tan}\psi_0'(l_t)\); eliminating
\(\Gamma_{\rm tan}\) gives \eqref{eq:main_tangency_eliminated}. Since
\(\partial_\Gamma F(l_t,\Gamma_{\rm tan})=-\psi_0(l_t)\ne0\) and
\(\partial_{ll}F(l_t,\Gamma_{\rm tan})=S_{\rm red}''(l_t)-\Gamma_{\rm tan}\psi_0''(l_t)\ne0\),
\(F=0\) has a fold in \((l,\Gamma)\); Taylor expansion
\[
    0=
    -\psi_0(l_t)(\Gamma-\Gamma_{\rm tan})
    +
    \tfrac12\big(S_{\rm red}''(l_t)-\Gamma_{\rm tan}\psi_0''(l_t)\big)(l-l_t)^2
    +
    o(|\Gamma-\Gamma_{\rm tan}|+|l-l_t|^2)
\]
gives \eqref{eq:main_tangency_branches}; if \(S>0\) on \((l_-,l_+)\), the
bang--bang law gives \(u^*=u_{\max}\) there.
\end{proof}
\begin{proof}[Proof of \Cref{prop:main_closure_certificate}]
By the mean value theorem
\(|H(\widetilde E)-H(E^*)|=|H'(\xi)||\widetilde E-E^*|\ge m|\widetilde E-E^*|\),
and \(H(E^*)=0\), \(|H(\widetilde E)|=\rho_{\rm cl}\) give
\(|\widetilde E-E^*|\le\rho_{\rm cl}/m\).
\end{proof}
\begin{proof}[Proof of \Cref{prop:main_FA_diagnostic}]
By consistency \(B_h(0)\to B_0\) and \(\Phi_h'(\widetilde E)\to\Phi_u'(E^*)\),
so \(\mathcal D_{\rm FA}^h\to B_0-\Phi_u'(E^*)=0\) by
\Cref{thm:main_forward_adjoint_identity}. If \(a\in W^{1,1}\), integration by
parts gives \(\langle\sigma,\psi_0\rangle=\int_\Omega(-a'-m)\psi_0-a(l_0)\psi_0(l_0)\),
so the uncorrected interior co-load \(\sigma_{\rm int}=-a'-m\) satisfies
\(\int_\Omega\sigma_{\rm int}\psi_0=\Phi_u'(E^*)+a(l_0)\psi_0(l_0)\), whence the
uncorrected residual is \(\mathcal D_{\rm FA}^{h,\rm int}\approx a_h(l_0)\psi_{0,h}(l_0)\).
\end{proof}

\section{Discussion}
\label{sec:conclusion}
For the closed-loop size-structured system
\eqref{eq:model_state}--\eqref{eq:model_flux_bc} we established a
forward-complete positive control system on \(L^1_+(\Omega)\), reduced the
stationary equilibria to the scalar closure \(E=\Phi_u(E)\) with derivative
\(\Phi_u'=\mathcal R_u-\mathcal C_u\) and a generic fold at
\(\Phi_u'(E^*)=1\), and proved existence of an optimal policy on a uniformly
nonresonant sheet. From the constrained Lagrangian we obtained the rank-one
adjoint \eqref{eq:main_rank_one_adjoint} with the boundary-corrected co-load
\(\sigma\), the Sherman--Morrison reduction \eqref{eq:main_SM_solution}, and the
central identity
$B_0=\langle\sigma,\psi_0\rangle=\Phi_u'(E^*)$,
which makes closure resonance, zero-discount adjoint resonance, and singularity
of the Sherman--Morrison denominator one and the same scalar condition. The
same margin governs the switching decomposition
\(S=S_{\rm red}-\Gamma_0\psi_0\), single-threshold preservation, and the
tangency birth of harvest windows, so that equilibrium sensitivity, adjoint
solvability, and threshold stability are controlled together.

The boundary correction is structurally necessary rather than a lower-order
detail: because recruitment is fixed as a flux, the Lagrangian forces
\(\nu=\lambda(l_0)\) and the Dirac term \(-a(l_0)\delta_{l_0}\), and the
uncorrected interior co-load \(\sigma_{\rm int}=-a'-m\) satisfies
\(\int_\Omega\sigma_{\rm int}\psi_0=\Phi_u'(E^*)+a(l_0)\psi_0(l_0)\), a bias the
diagnostic \(\mathcal D_{\rm FA}^h\) detects.

Several problems remain open. First, at a fold \(1-\Phi_u'(E^*)=0\) the map
\(H_u\) loses local invertibility and the sheet may cease to be a graph,
replacing \(u\mapsto E_u^*\) by the set
\(\mathcal E_u^*:=\{E\ge0:E=\Phi_u(E)\}\) and calling for a set-valued or
branch-selection optimality theory. Second, the genuinely discounted dynamic
problem \(\max_u\int_0^\infty e^{-rt}\int_\Omega\pi u x\,dl\,dt\), \(r>0\),
should produce a true current-value adjoint with
\(\mathcal L_r^*=-g^*\partial_l+(r+\mu^*+u)\), whereas here \(r>0\) is only a
continuation parameter with \(B_r=\Phi_u'(E^*)+\mathcal O(r)\). Third, adding diffusion,
stochastic recruitment, or multiple structured species may replace the scalar
loop \(x\mapsto\langle\chi,x\rangle\) by finite-rank, compact, or fully
nonlinear feedback,
\(\chi\langle\sigma,\cdot\rangle\rightsquigarrow\sum_{j=1}^N\chi_j\langle\sigma_j,\cdot\rangle\)
or \(\mathcal K(E,\cdot)\), and one asks whether a resonance identity of the
form \(\det(I-\mathcal B)=0\iff\) loss of stationary closure invertibility
survives beyond the scalar rank-one setting.

\section*{Declaration of Interest Statement}
The authors declare that they have no known competing financial interests or personal relationships that could have appeared to influence the work reported in this paper.

\section*{Funding sources}
This research did not receive any specific grant from funding agencies in the public, commercial, or not-for-profit sectors.

\section*{Declaration of generative AI use}
In writing this article, the author used Gemini~3 to polish the language. This content has been reviewed and edited by the author to ensure accuracy.

\section*{Data Availability Statement}
Data sharing is not applicable as this study does not analyze or generate new datasets.

\bibliographystyle{unsrt}
\bibliography{reference}

\appendix
\section{Frozen-flow stability estimate}
\label{app:frozen_flow_stability}
The transport equation carries a prescribed nonzero inflow flux at \(l_0\); the
attendant trace and interface handling is in the spirit of the analysis of
transport equations with inflow boundary conditions \cite{scott2022transport}, here
carried out by the method of characteristics at \(BV\) regularity.
Fix \(T>0\) and frozen paths \(e_i\in\mathcal K_T\) (\(i=1,2\)); in particular
\(e_i\in C^0([0,T];[0,E_T])\) with \(\operatorname{Lip}(e_i)\le\Lambda_T\), the
Lipschitz-in-time ball produced by the contraction argument of
\Cref{subsec:proof_wellposedness}. Set \(g_i(t,l):=g(e_i(t),l)\),
\(\mu_i(t,l):=\mu(e_i(t),l)\), and let \(z_i=z_{e_i}\) solve
\[
    \partial_t z_i+\partial_l(g_i z_i)=-(\mu_i+u)z_i,
    \qquad
    g_i(t,l_0)z_i(t,l_0)=p(t),
    \qquad
    z_i(0,l)=x_0(l),
\]
equivalently \(\partial_t z_i+g_i\partial_lz_i=-(\partial_lg_i+\mu_i+u)z_i\).
The goal is
\begin{equation}
    \label{eq:app_goal}
    \|z_{e_1}(t)-z_{e_2}(t)\|_{L^1(\Omega)}
    \le
    C_T\int_0^t |e_1(s)-e_2(s)|\,ds,
    \qquad
    0\le t\le T .
\end{equation}
Throughout, \(C_T>0\) may change from line to line and depends only on
\[
    T,\ g_m,\ C_{E_T},\ L_{E_T},\ \|x_0\|_{BV},\ M_u^{\rm dyn},\
    p_{\min},\ p_{\max},\ M_p(T),\ \Lambda_T,\ |\Omega|.
\]
\subsection{Flow and entrance-time stability}
Let \(L_i(\tau;t,l)\) be the backward characteristic
\(\frac{d}{d\tau}L_i=g_i(\tau,L_i)\), \(L_i(t;t,l)=l\); since \(g_i\ge g_m>0\),
characteristics move monotonically from \(l_0\) to \(l_m\). Define
\(\ell_i(t):=L_i(t;0,l_0)\) and the entrance time \(\tau_i(t,l)\) by
\(L_i(\tau_i;t,l)=l_0\), \(0<\tau_i\le t\), where applicable.
\begin{lemma}[Flow stability]
\label{lem:app_flow_stability}
For \(0\le\tau\le t\le T\),
\begin{equation}
    \label{eq:app_flow_stability}
    |L_1(\tau;t,l)-L_2(\tau;t,l)|
    \le
    C_T\int_\tau^t |e_1(s)-e_2(s)|\,ds,
\end{equation}
and consequently
\(|\ell_1(t)-\ell_2(t)|\le C_T\int_0^t|e_1-e_2|\).
\end{lemma}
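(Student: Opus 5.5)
Since the two flows solve ODEs with a common terminal condition, the plan is a Grönwall estimate run backward in $\tau$ from $\tau=t$. Fix $(t,l)$ and set $D(\tau):=L_1(\tau;t,l)-L_2(\tau;t,l)$ for $\tau\in[0,t]$. Then $D(t)=0$ and
\[
D'(\tau)=g(e_1(\tau),L_1)-g(e_2(\tau),L_2)
=\big[g(e_1(\tau),L_1)-g(e_2(\tau),L_1)\big]+\big[g(e_2(\tau),L_1)-g(e_2(\tau),L_2)\big].
\]
The first bracket is at most $L_{E_T}|e_1(\tau)-e_2(\tau)|$ by the $E$-Lipschitz bound \eqref{eq:model_E_lip}; the $W^{2,\infty}$ norm dominates the sup norm, and $e_i$ takes values in $[0,E_T]$. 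The second bracket is at most $C_{E_T}|D(\tau)|$, because $|\partial_lg|\le C_{E_T}$ on $[0,E_T]\times\Omega$ by \Cref{ass:model_coefficients}, so $g(e_2(\tau),\cdot)$ is Lipschitz in $l$ with that constant. Integrating from $\tau$ to $t$ gives
\[
|D(\tau)|\le L_{E_T}\int_\tau^t|e_1-e_2|\,ds+C_{E_T}\int_\tau^t|D(s)|\,ds .
\]
Backward Grönwall then yields \eqref{eq:app_flow_stability} with $C_T=L_{E_T}e^{C_{E_T}T}$. This uses that the first term is nonincreasing in $\tau$, which holds because the integrand is nonnegative.

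One technical point needs care: the characteristics must stay in $\Omega$ on the whole interval where we compare them, since $g$ is only defined for $l\in\Omega$. I would handle this by extending $g(E,\cdot)$ to all of $\mathbb R$ by a Lipschitz extension, for example constant extension beyond $l_0$ and $l_m$. The extension preserves the bounds $g\ge g_m$, $|\partial_lg|\le C_{E_T}$ and the $E$-Lipschitz bound. The extended flows are then globally defined, and the estimate holds for all $\tau\in[0,t]$. On the range where both characteristics lie in $\Omega$, the extended flows agree with the original ones, which is exactly the range used later; the entrance-time branch is handled separately by $\tau_i$. This extension step is the only real obstacle, and it is only bookkeeping.

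The consequence for $\ell_i$ follows the same route, since $\ell_i(t)=L_i(t;0,l_0)$ is the forward flow started at $(0,l_0)$. Let $F(s):=L_1(s;0,l_0)-L_2(s;0,l_0)$, so that $F(0)=0$. The same splitting gives
\[
|F(t)|\le L_{E_T}\int_0^t|e_1-e_2|\,ds+C_{E_T}\int_0^t|F|\,ds,
\]
and forward Grönwall gives $|\ell_1(t)-\ell_2(t)|\le C_T\int_0^t|e_1-e_2|\,ds$.

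No property of $e_i$ beyond continuity and range in $[0,E_T]$ is needed here. The time-Lipschitz bound $\Lambda_T$ enters only in later lemmas.
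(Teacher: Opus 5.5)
Your proof is correct and follows essentially the paper's argument: split $g(e_1,L_1)-g(e_2,L_2)$ into an $l$-Lipschitz part and an $E$-Lipschitz part controlled by \eqref{eq:model_E_lip}, then apply backward Gronwall from $D(t)=0$. The paper inserts the intermediate point $g(e_1,L_2)$ instead of your $g(e_2,L_1)$, which is immaterial; your Lipschitz extension of $g$ beyond $\Omega$ and your separate forward Gronwall for $\ell_i(t)=L_i(t;0,l_0)$ only make explicit two points the paper passes over (characteristics leaving $\Omega$, and the $\ell_i$ bound being the forward-in-time version of \eqref{eq:app_flow_stability}).
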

\begin{proof}
Subtracting the flow equations and splitting
\(g(e_1,L_1)-g(e_2,L_2)=[g(e_1,L_1)-g(e_1,L_2)]+[g(e_1,L_2)-g(e_2,L_2)]\), the
first term is \(\le C_T|L_1-L_2|\) by the \(W^{2,\infty}\)-in-size bound and the
second \(\le C_T|e_1-e_2|\) by \eqref{eq:model_E_lip}. Thus
\(D(\tau):=|L_1-L_2|\) obeys \(D(t)=0\),
\(D(\tau)\le C_T\int_\tau^t D+C_T\int_\tau^t|e_1-e_2|\), and backward Gronwall
gives \eqref{eq:app_flow_stability}; the corner curve
\((\tau=0,l=l_0)\) gives the second bound.
\end{proof}
\begin{lemma}[Entrance-time stability]
\label{lem:app_entrance_stability}
If both backward characteristics enter through \(l_0\), then
\[
    |\tau_1(t,l)-\tau_2(t,l)|
    \le
    \frac{C_T}{g_m}\int_0^t |e_1(s)-e_2(s)|\,ds .
\]
\end{lemma}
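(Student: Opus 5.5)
We compare the two entrance times through the identity defining them, using the flow stability of \Cref{lem:app_flow_stability} and the lower speed bound \(g\ge g_m\). Write \(\tau_i=\tau_i(t,l)\), so that \(L_i(\tau_i;t,l)=l_0\), and assume without loss of generality \(\tau_1\le\tau_2\). Both characteristics pass through \((t,l)\), and \(L_2(\tau;t,l)\ge l_0\) for \(\tau\in[\tau_2,t]\). The key observation is that between \(\tau_1\) and \(\tau_2\) the first characteristic must travel from \(l_0\) up to the level \(L_1(\tau_2;t,l)\) at speed at least \(g_m\). Integrating \(\frac{d}{d\tau}L_1=g_1(\tau,L_1)\ge g_m\) over \([\tau_1,\tau_2]\) gives
\[
    L_1(\tau_2;t,l)-l_0=\int_{\tau_1}^{\tau_2}g_1(s,L_1(s;t,l))\,ds\ge g_m(\tau_2-\tau_1).
\]

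Next we bound the left-hand side. Since \(L_2(\tau_2;t,l)=l_0\),
\[
    L_1(\tau_2;t,l)-l_0=L_1(\tau_2;t,l)-L_2(\tau_2;t,l)\le C_T\int_{\tau_2}^t|e_1-e_2|\,ds\le C_T\int_0^t|e_1-e_2|\,ds
\]
by \eqref{eq:app_flow_stability}, applied at \(\tau=\tau_2\in[0,t]\). Dividing by \(g_m\) gives the claim. The case \(\tau_2\le\tau_1\) is symmetric, with the roles of the indices swapped.

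The only point needing care is that \(L_1(\cdot;t,l)\) is defined and satisfies the flow ODE on \([\tau_1,\tau_2]\subset[0,t]\). This holds because the backward characteristic is defined on \([0,t]\) as long as it stays in \(\Omega\). To ensure this, we extend \(g\) to the left of \(l_0\) by a \(W^{2,\infty}\)-extension that still satisfies \(g\ge g_m\), so that characteristics may be continued past the boundary without changing anything inside \(\Omega\). Alternatively, the case \(\tau_1\le\tau_2\) uses only the portion of \(L_1\) with \(L_1\ge l_0\), which is genuinely inside \(\Omega\); that portion is exactly what the symmetric labeling selects.

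I expect this well-definedness issue, rather than the estimate itself, to be the main (mild) obstacle. The resulting constant is \(C_T/g_m\), with \(C_T\) from \Cref{lem:app_flow_stability}.
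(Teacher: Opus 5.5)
Your proof is correct and follows the paper's argument: with \(\tau_1\le\tau_2\), the speed bound \(g\ge g_m\) gives \(g_m(\tau_2-\tau_1)\le L_1(\tau_2)-l_0=|L_1(\tau_2)-L_2(\tau_2)|\), and \Cref{lem:app_flow_stability} then bounds the right-hand side. On well-definedness, no extension of \(g\) is needed: the earlier-entering characteristic \(L_1\) stays in \([l_0,l]\subset\Omega\) on \([\tau_1,t]\), which is exactly the observation the paper uses.
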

\begin{proof}
Assume \(\tau_1\le\tau_2\), so \(L_1(\tau_1)=l_0=L_2(\tau_2)\) and \(L_1\)
remains in \(\Omega\) on \([\tau_1,t]\); evaluating the earlier-entering
characteristic at the later time, \(g_m(\tau_2-\tau_1)\le
L_1(\tau_2)-L_1(\tau_1)=L_1(\tau_2)-l_0=|L_1(\tau_2)-L_2(\tau_2)|\le
C_T\int_{\tau_2}^t|e_1-e_2|\), by \Cref{lem:app_flow_stability}. Divide by
\(g_m\); the case \(\tau_2\le\tau_1\) is identical.
\end{proof}

\subsection{A bounded-variation composition estimate}

The following composition estimate is a direct consequence of the
measure representation of the distributional derivative of a
one-dimensional \(BV\) function; see, e.g.,
Ambrosio--Fusco--Pallara
\cite{ambrosio2000functions}.
For completeness we include the short proof.
\begin{lemma}[\(BV\)-composition estimate]
\label{lem:app_bv_composition}
Let \(w\in BV(\Omega)\), and extend it to
\(\overline w\in BV(\mathbb R)\) by constant continuation,
so that
\[
\operatorname{TV}_{\mathbb R}(\overline w)
=
\operatorname{TV}_{\Omega}(w).
\]
Let
\(\phi_1,\phi_2:\Omega\to\Omega\)
be monotone bi-Lipschitz maps satisfying
$|\phi_i'|\ge J_m>0$ a.e., and
$\|\phi_1-\phi_2\|_{L^\infty(\Omega)}
\le d$.
Then
\[
\int_\Omega
|w(\phi_1(l))-w(\phi_2(l))|
\,dl
\le
\frac{d}{J_m}
\operatorname{TV}_\Omega(w).
\]
\end{lemma}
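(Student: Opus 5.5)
The estimate rests on the fundamental theorem of calculus for one-dimensional \(BV\) functions: the difference of values of \(\overline w\) at two points is controlled by the variation measure of the interval between them. Concretely, let \(\overline w\) be the good (say right-continuous) representative of the constant extension. For \(a\le b\) we have
\[
|\overline w(b)-\overline w(a)|\le|D\overline w|([a,b]),
\]
and hence, for every \(l\in\Omega\),
\[
|w(\phi_1(l))-w(\phi_2(l))|\le|D\overline w|(I_l),
\]
where \(I_l\) is the closed interval with endpoints \(\phi_1(l),\phi_2(l)\). By hypothesis \(|I_l|\le d\). Since \(\phi_i\) are bi-Lipschitz, changing the representative of \(w\) alters \(w\circ\phi_i\) only on null sets, so the left-hand integral is well defined.

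The plan is then to integrate this bound in \(l\) and apply Tonelli to the product of Lebesgue measure and \(|D\overline w|\):
\[
\int_\Omega|D\overline w|(I_l)\,dl=\int_{\mathbb R}\big|\{l\in\Omega:\ y\in I_l\}\big|\,d|D\overline w|(y).
\]
Because \(I_l\) is a closed interval depending measurably on \(l\), the set \(\{(l,y):y\in I_l\}\) is Borel, so Tonelli applies.

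The key step is to bound the slice measure \(|\{l: y\in I_l\}|\) uniformly in \(y\) by \(d/J_m\).

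Suppose first that \(\phi_1,\phi_2\) are both increasing. If \(y\in I_l\), then \(\phi_1(l)\) and \(\phi_2(l)\) both lie in \([y-d,y+d]\), but the constraint is sharper than that. Take the case \(\phi_1(l)\le y\le\phi_2(l)\). Then \(l\le\phi_1^{-1}(y)\) and \(l\ge\phi_2^{-1}(y)\), where the inverses are extended monotonically if needed. So the slice is contained in the interval between \(\phi_1^{-1}(y)\) and \(\phi_2^{-1}(y)\), together with the symmetric case.

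It remains to bound \(|\phi_1^{-1}(y)-\phi_2^{-1}(y)|\). Set \(s=\phi_2^{-1}(y)\). Then \(|\phi_1(s)-y|=|\phi_1(s)-\phi_2(s)|\le d\). Since \(\phi_1^{-1}\) is Lipschitz with constant at most \(1/J_m\), this gives \(|s-\phi_1^{-1}(y)|\le d/J_m\). Hence the slice has measure at most \(d/J_m\). Summing against \(|D\overline w|(\mathbb R)=\operatorname{TV}_\Omega(w)\) yields the claimed bound.

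The main obstacle is bookkeeping at the ends of the domain. First, \(y\) may fall outside \(\phi_i(\Omega)\), so the inverse must be defined by monotone extension (for instance, extending \(\phi_i\) affinely with slope \(J_m\) beyond \(\Omega\)). Second, \(\operatorname{TV}_{\mathbb R}(\overline w)\) must not charge jumps at \(l_0,l_m\), which is exactly what constant continuation guarantees.

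If one map is increasing and the other decreasing, the intended application (characteristic maps) never produces this case, so I would state that both maps share the same orientation. Under that convention, the decreasing–decreasing case follows by reflection \(l\mapsto l_0+l_m-l\).
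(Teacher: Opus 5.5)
\textbf{Verdict.} Your argument is correct when $\phi_1,\phi_2$ have the same orientation, and it follows the paper's route. The one gap is that you drop the mixed-orientation case instead of proving it; it closes in one line.

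\textbf{Comparison with the paper.} Both proofs dominate the integrand by $|D\overline w|(I_l)$, exchange the order of integration, and bound the slice measure uniformly in $y$. The paper first substitutes $\xi=\phi_1(l)$ and bounds the slice of $\psi=\phi_2\circ\phi_1^{-1}$ by $d$. You instead bound the $l$-slice by $d/J_m$ directly, using the Lipschitz inverse. For two increasing maps, and by your reflection two decreasing ones, your proof is complete. At the key step it is more careful than the paper's. The paper's justification is that each interval joining $\xi$ and $\psi(\xi)$ has length at most $d$. That only places the slice in $[y-d,y+d]$, so it gives $2d$; your monotone-inverse step is what yields the constant $d$. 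Your equal-slope affine extension is also the right device. It keeps $|\phi_1-\phi_2|\le d$ outside $\Omega$, which you need when $s=\phi_2^{-1}(y)\notin\Omega$.

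\textbf{The gap.} You change the statement instead of proving it. The lemma allows any two monotone bi-Lipschitz maps, and the fact that the application never produces mixed orientations is not an argument. The fix fits inside your own framework. If one map increases and the other decreases, then $(\phi_1-\phi_2)'$ has a fixed sign and modulus at least $2J_m$ a.e. Hence $2J_m|\Omega|\le|(\phi_1-\phi_2)(l_m)-(\phi_1-\phi_2)(l_0)|\le 2d$, i.e.\ $|\Omega|\le d/J_m$. The trivial slice bound $|\{l\in\Omega:y\in I_l\}|\le|\Omega|\le d/J_m$ then feeds into your Tonelli step unchanged.

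\textbf{Why the mixed case needs its own line.} The paper's $\xi$-slice bound genuinely fails there. Let $c$ be the midpoint of $\Omega$ and take $\phi_1(l)=c+\eta(l-c)$, $\phi_2(l)=c-\varepsilon\eta(l-c)$ with $0<\varepsilon<1$ and $0<\eta\le 1$. Every interval joining $\xi$ and $\psi(\xi)$ then contains $c$. So the slice at $y=c$ is all of $\phi_1(\Omega)$, of length $\eta|\Omega|>(1+\varepsilon)\eta|\Omega|/2=d$.
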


\begin{proof}
By the change of variables
\(\xi=\phi_1(l)\),
the lower bound on \(\phi_1'\) yields
$dl \le J_m^{-1}\,d\xi$,
hence
\[
\int_\Omega
|w(\phi_1(l))-w(\phi_2(l))|
\,dl
\le
\frac1{J_m}
\int_{\phi_1(\Omega)}
|\overline w(\xi)-\overline w(\psi(\xi))|
\,d\xi,
\]
where $\psi(\xi) := \phi_2(\phi_1^{-1}(\xi))$.
Since $|\psi(\xi)-\xi| \le d$,
it remains to estimate the last integral.
For the precise representative of a one-dimensional
\(BV\) function,
the distributional derivative
\(D\overline w\)
is a finite Radon measure and
\[
\overline w(b)-\overline w(a)
=
D\overline w((a,b])
\]
for every \(a<b\)
(see Ambrosio--Fusco--Pallara).
Therefore,
\[
|\overline w(\xi)-\overline w(\psi(\xi))|
\le
|D\overline w|
\!\left(
[\min\{\xi,\psi(\xi)\},
\max\{\xi,\psi(\xi)\}]
\right).
\]
Integrating over \(\phi_1(\Omega)\) and applying Fubini,
\[
\int_{\phi_1(\Omega)}
|\overline w(\xi)-\overline w(\psi(\xi))|
\,d\xi \le
\int_{\mathbb R}
\left(
\int_{\phi_1(\Omega)}
\mathbf 1_{
[\min(\xi,\psi(\xi)),
\max(\xi,\psi(\xi))]
}(y)
\,d\xi
\right)
\,d|D\overline w|(y).
\]
Since every interval
joining
\(\xi\)
and
\(\psi(\xi)\)
has length at most \(d\),
\[
\int_{\phi_1(\Omega)}
\mathbf 1_{
[\min(\xi,\psi(\xi)),
\max(\xi,\psi(\xi))]
}(y)
\,d\xi
\le
d,
\]
for every \(y\in\mathbb R\).
Hence
\[
\int_{\phi_1(\Omega)}
|\overline w(\xi)-\overline w(\psi(\xi))|
\,d\xi
\le
d\,|D\overline w|(\mathbb R)
=
d\,\operatorname{TV}_{\mathbb R}(\overline w).
\]
Since $\operatorname{TV}_{\mathbb R}(\overline w)
=
\operatorname{TV}_\Omega(w)$, the desired estimate follows.
\end{proof}

\subsection{The \(L^1\)-stability estimate}
For each \(i\), with \(q_i(t,l):=\partial_lg_i(t,l)+\mu_i(t,l)+u(t,l)\),
\begin{equation}
    \label{eq:app_initial_branch}
    z_i(t,l)
    =
    x_0(L_i(0;t,l))
    \exp\!\Big[-\!\int_0^t q_i(s,L_i(s;t,l))\,ds\Big],
    \qquad l>\ell_i(t),
\end{equation}
\begin{equation}
    \label{eq:app_boundary_branch}
    z_i(t,l)
    =
    \frac{p(\tau_i(t,l))}{g_i(\tau_i(t,l),l_0)}
    \exp\!\Big[-\!\int_{\tau_i(t,l)}^t q_i(s,L_i(s;t,l))\,ds\Big],
    \qquad l<\ell_i(t),
\end{equation}
with \(\|q_i\|_{L^\infty}\le C_T\), \(e^{-C_TT}\le\exp[-\int q_i]\le e^{C_TT}\).
\begin{proposition}[Frozen-path stability]
\label{prop:app_frozen_path_stability}
Under the assumptions of Section~\ref{sec:model}, for every \(T>0\),
\begin{equation}
    \label{eq:app_frozen_path_stability}
    \|z_{e_1}(t)-z_{e_2}(t)\|_{L^1(\Omega)}
    \le
    C_T\int_0^t|e_1(s)-e_2(s)|\,ds,
    \qquad
    0\le t\le T.
\end{equation}
\end{proposition}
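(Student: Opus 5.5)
We compare the characteristic representations \eqref{eq:app_initial_branch}--\eqref{eq:app_boundary_branch} of \(z_1\) and \(z_2\) at a fixed time \(t\in[0,T]\). Write \(\varepsilon(t):=\int_0^t|e_1-e_2|\,ds\). By \Cref{lem:app_flow_stability}, \(|\ell_1(t)-\ell_2(t)|\le C_T\varepsilon(t)\). We therefore split \(\Omega\) into three regions:
\begin{itemize}
\item the initial region \(l>\max(\ell_1,\ell_2)\), where both \(z_i\) come from \(x_0\);
\item the boundary region \(l<\min(\ell_1,\ell_2)\), where both come from the inflow flux \(p\);
\item the interface strip between \(\ell_1\) and \(\ell_2\), of length \(\le C_T\varepsilon(t)\).
\end{itemize}
On the strip each \(z_i\) is bounded in \(L^\infty\): the initial branch is bounded by \(\|x_0\|_{L^\infty}e^{C_TT}\), which is finite since \(x_0\in BV\) in one dimension, and the boundary branch by \(p_{\max}e^{C_TT}/g_m\). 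Hence the strip contributes at most \(C_T\varepsilon(t)\).

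\emph{Initial region.} Here
\[
|z_1-z_2|\le |x_0(L_1(0;t,l))-x_0(L_2(0;t,l))|\,e^{C_TT}+|x_0(L_2(0;t,l))|\,\Big|e^{-\int q_1}-e^{-\int q_2}\Big|.
\]
For the first term, use \Cref{lem:app_bv_composition} with \(\phi_i=L_i(0;t,\cdot)\). These maps are bi-Lipschitz, with Jacobian \(\partial_lL_i=\exp(-\int\partial_lg_i)\ge e^{-C_TT}=:J_m\). By \Cref{lem:app_flow_stability}, \(d\le C_T\varepsilon(t)\). The maps must be restricted to, or extended from, the common subinterval; the constant extension of \(x_0\) built into the lemma handles this. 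The result is a bound \(C_T\operatorname{TV}(x_0)\varepsilon(t)\).

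For the second term, \(|e^{-a}-e^{-b}|\le e^{C_TT}|a-b|\). We bound \(\int_0^t|q_1(s,L_1)-q_2(s,L_2)|\,ds\) by splitting \(q_1(s,L_1)-q_2(s,L_2)\) into three pieces:
\begin{itemize}
\item \([\partial_lg_1+\mu_1](s,L_1)-[\partial_lg_1+\mu_1](s,L_2)\), bounded by \(C_T|L_1-L_2|\) through the \(W^{2,\infty}\) bound on \(g\) and the \(W^{1,\infty}\) bound on \(\mu\);
\item the \(e\)-difference at the common point \(L_2\), bounded by \(L_M|e_1-e_2|\) via \eqref{eq:model_E_lip};
\item the control difference \(u(s,L_1)-u(s,L_2)\).
\end{itemize}
The control term is the delicate one, since \(u(s,\cdot)\) is only \(BV\). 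We do not bound it pointwise. Instead we integrate in \(l\) first, use Fubini, and apply \Cref{lem:app_bv_composition} at each fixed \(s\) with \(\phi_i=L_i(s;t,\cdot)\). This gives \(\int_\Omega|u(s,L_1)-u(s,L_2)|\,dl\le C_TM_u^{\rm dyn}\varepsilon(t)\), and integrating over \(s\in[0,t]\) yields \(C_T\varepsilon(t)\), with \(T\) absorbed into \(C_T\). The \(x_0\) prefactor is in \(L^\infty\), so this ordering is legitimate.

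\emph{Boundary region.} We reparametrize by entrance time rather than by \(l\), which is the natural variable for the inflow branch: the map \(l\mapsto\tau_i(t,l)\) is bi-Lipschitz with derivative comparable to \(1/g\). We then proceed in three steps.
\begin{itemize}
\item The factor \(p(\tau_1)/g_1(\tau_1,l_0)-p(\tau_2)/g_2(\tau_2,l_0)\) is handled by the same BV-composition argument applied to \(p\in BV([0,T])\). The displacement is controlled by \Cref{lem:app_entrance_stability}, giving a bound \(C_TM_p(T)\varepsilon(t)\) after changing variables \(l\mapsto\tau\).
\item The variation of \(g_i(\cdot,l_0)\) is Lipschitz, because \(\tau\mapsto g(e_i(\tau),l_0)\) is Lipschitz (here \(\operatorname{Lip}(e_i)\le\Lambda_T\) from \(\mathcal K_T\) is used), and \(|g(e_1,l_0)-g(e_2,l_0)|\le L_M|e_1-e_2|\).
\item The survival factors are treated as in the initial region. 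The extra mismatch in the lower integration limit \(\tau_1\) versus \(\tau_2\) costs \(C_T|\tau_1-\tau_2|\le C_T\varepsilon(t)\).
\end{itemize}

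\emph{Main obstacle.} I expect the main difficulty to be the rigorous use of the BV-composition lemma on the boundary branch. That step requires that \(l\mapsto\tau_i(t,l)\) be bi-Lipschitz onto an interval, with the two domains differing only by the strip already accounted for. It also requires handling \(p\) evaluated at \(\tau_1\) versus \(\tau_2\), using \(\sup_l|\tau_1-\tau_2|\le C_T\varepsilon(t)\) as the displacement \(d\). I would first change variables to \(\tau\) on each branch and then invoke \Cref{lem:app_bv_composition} in the \(\tau\)-variable.

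Summing the contributions of the three regions gives \eqref{eq:app_frozen_path_stability}.
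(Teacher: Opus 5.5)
Your proposal is correct and takes essentially the same route as the paper. You use the same three-way split of $\Omega$ into the common initial region, the common boundary region, and the interface strip of width $\le C_T\varepsilon(t)$, and you apply the $BV$-composition lemma to $x_0$ along $L_i(0;t,\cdot)$, to $u(s,\cdot)$ after Fubini, and to $p$ along the bi-Lipschitz entrance-time maps $\tau_i(t,\cdot)$, with displacements controlled by the flow and entrance-time lemmas. As in the paper, the bound $\operatorname{Lip}(e_i)\le\Lambda_T$ from $\mathcal K_T$ controls the factor $g(e_i(\tau_i),l_0)^{-1}$, and your change of variables $l\mapsto\tau$ is what turns the pointwise term $|e_1(\tau_2)-e_2(\tau_2)|$ into $C_T\varepsilon(t)$ after integrating over the boundary region.
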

\begin{proof}
Set \(\varepsilon_e(t):=\int_0^t|e_1-e_2|\). By \Cref{lem:app_flow_stability},
the mismatch strip
\(\mathcal M(t)=(\min\ell_i(t),\max\ell_i(t))\) has
\(|\mathcal M(t)|\le C_T\varepsilon_e(t)\); on it
\(|z_i|\le e^{C_TT}\max\{\|x_0\|_{L^\infty},p_{\max}/g_m\}\) (using
\(BV\hookrightarrow L^\infty\) in one dimension for \(x_0\)), so
\begin{equation}
    \label{eq:app_mismatch_estimate}
    \int_{\mathcal M(t)}|z_1-z_2|\,dl\le C_T\varepsilon_e(t).
\end{equation}
Common initial-line region \(\mathcal I(t)=(\max\ell_i(t),l_m)\). Write
\(z_i=X_iG_i\), \(X_i(l):=x_0(L_i(0;t,l))\),
\(G_i(l):=\exp[-\int_0^t q_i(s,L_i)]\), so
\(|z_1-z_2|\le|X_1-X_2|\,\|G_1\|_\infty+\|X_2\|_\infty|G_1-G_2|\). Here
\(\|G_i\|_\infty\le e^{C_TT}\) and
\(\|L_1(0;t,\cdot)-L_2(0;t,\cdot)\|_\infty\le C_T\varepsilon_e(t)\), so
\Cref{lem:app_bv_composition} gives
\(\int_{\mathcal I}|X_1-X_2|\le C_T\operatorname{TV}(x_0)\varepsilon_e(t)\le
C_T\varepsilon_e(t)\). For the exponentials, \(|e^{-A_1}-e^{-A_2}|\le
e^{C_TT}|A_1-A_2|\), \(A_i(l):=\int_0^t q_i(s,L_i)\), and
\[
    |A_1-A_2|
    \le
    \int_0^t|(\partial_lg_1+\mu_1)(s,L_1)-(\partial_lg_2+\mu_2)(s,L_2)|
    +
    \int_0^t|u(s,L_1)-u(s,L_2)|.
\]
The coefficient part is \(\le C_T(|L_1-L_2|+|e_1-e_2|)\) pointwise (using
\(\partial_{ll}g,\partial_l\mu\in L^\infty\) and the \(E\)-Lipschitz continuity
of \(\partial_lg,\mu\)), so by \Cref{lem:app_flow_stability} its
\(\mathcal I\)-integral is \(\le C_T\varepsilon_e(t)\); the control part, by
Fubini and \Cref{lem:app_bv_composition} with \(w=u(s,\cdot)\),
\(\le\int_0^t C_T\operatorname{TV}(u(s,\cdot))\|L_1-L_2\|_\infty\,ds\le
C_T M_u^{\rm dyn}\varepsilon_e(t)\). Hence
\begin{equation}
    \label{eq:app_initial_region_estimate}
    \int_{\mathcal I(t)}|z_1-z_2|\,dl\le C_T\varepsilon_e(t).
\end{equation}
Common boundary region \(\mathcal B(t)=(l_0,\min\ell_i(t))\). Write
\(z_i=P_iG_i^b\), \(P_i(l):=p(\tau_i(t,l))/g_i(\tau_i(t,l),l_0)\),
\(G_i^b(l):=\exp[-\int_{\tau_i}^t q_i(s,L_i)]\). Then
\(|P_1-P_2|\le g_m^{-1}|p(\tau_1)-p(\tau_2)|+p_{\max}|g_1(\tau_1,l_0)^{-1}-g_2(\tau_2,l_0)^{-1}|\).
The entrance-time map \(l\mapsto\tau_i(t,l)\) is monotone; differentiating
\(L_i(\tau_i(t,l);t,l)=l_0\) in \(l\) gives
\[
    \partial_\tau L_i(\tau_i;t,l)\,\partial_l\tau_i
    +
    \partial_l L_i(\tau_i;t,l)=0,
\]
and since \(\partial_\tau L_i(\tau_i;t,l)=g_i(\tau_i,L_i(\tau_i;t,l))=g_i(\tau_i,l_0)\),
\[
    |\partial_l\tau_i|
    =
    \frac{|\partial_lL_i(\tau_i;t,l)|}{g_i(\tau_i,l_0)}.
\]
Since \(g_m\le g_i\le C_T\) and the first variation satisfies
\[
    \frac{d}{d\tau}\partial_lL_i(\tau;t,l)
    =
    \partial_lg_i(\tau,L_i(\tau;t,l))\partial_lL_i(\tau;t,l),
    \qquad
    \partial_lL_i(t;t,l)=1,
\]
we see $e^{-C_TT}
    \le
    |\partial_lL_i(\tau;t,l)|
    \le
    e^{C_TT}$, and there is \(J_T>0\) with \(|\partial_l\tau_i|\ge J_T\) a.e. on \(\mathcal B(t)\).
Thus \Cref{lem:app_bv_composition} with \(w=p\in BV(0,T)\) and
\Cref{lem:app_entrance_stability} give
\(\int_{\mathcal B}|p(\tau_1)-p(\tau_2)|\le C_T\operatorname{TV}_{[0,T]}(p)
\|\tau_1-\tau_2\|_\infty\le C_T\varepsilon_e(t)\). For the \(g\)-factor,
\[
    |g_1(\tau_1,l_0)-g_2(\tau_2,l_0)|
    \le
    |g(e_1(\tau_1),l_0)-g(e_1(\tau_2),l_0)|
    +
    |g(e_1(\tau_2),l_0)-g(e_2(\tau_2),l_0)|;
\]
the second term is \(\le C_T|e_1(\tau_2)-e_2(\tau_2)|\), and, since the frozen
paths lie in the Lipschitz-in-time ball \(\mathcal K_T\),
\(|e_1(\tau_1)-e_1(\tau_2)|\le\Lambda_T|\tau_1-\tau_2|\), so the first term is
\(\le C_T|\tau_1-\tau_2|\), controlled by \Cref{lem:app_entrance_stability}.
Hence \(\int_{\mathcal B}|P_1-P_2|\le C_T\varepsilon_e(t)\). For \(G_i^b\), with
\(A_i^b(l):=\int_{\tau_i}^t q_i(s,L_i)\), \(|G_1^b-G_2^b|\le e^{C_TT}|A_1^b-A_2^b|\)
and
\(|A_1^b-A_2^b|\le\int_{\max\tau_i}^t|q_1(s,L_1)-q_2(s,L_2)|+C_T|\tau_1-\tau_2|\),
the first term treated as on \(\mathcal I\), the second by
\Cref{lem:app_entrance_stability}, so
\(\int_{\mathcal B}|G_1^b-G_2^b|\le C_T\varepsilon_e(t)\); with
\(|P_2|\le p_{\max}/g_m\),
\begin{equation}
    \label{eq:app_boundary_region_estimate}
    \int_{\mathcal B(t)}|z_1-z_2|\,dl\le C_T\varepsilon_e(t).
\end{equation}
Since, up to endpoints of measure zero,
\[
    \Omega=\mathcal I(t)\cup\mathcal M(t)\cup\mathcal B(t),
\]
summing the three estimates gives \eqref{eq:app_frozen_path_stability}.
\end{proof}
\begin{remark}[Corner compatibility]
\label{rem:app_corner_compatibility}
The two representations meet along \(l=\ell_i(t)=L_i(t;0,l_0)\), the image of
the corner \((0,l_0)\); their one-sided traces agree iff
\(x_0(l_0)=p(0)/g(e_i(0),l_0)\). If this fails the solution jumps across the
single curve \(\ell_i(t)\), a measure-zero set for each fixed \(t\), so the
\(L^1\)-solution is unaffected; the two-path mismatch strip has length
\(\le C_T\int_0^t|e_1-e_2|\) and its contribution is already included in
\eqref{eq:app_mismatch_estimate}.
\end{remark}

Estimate \eqref{eq:app_goal} supplies the contraction bound
\(|\mathcal T e_1(t)-\mathcal T e_2(t)|\le\|\chi\|_{L^\infty}
\|z_{e_1}(t)-z_{e_2}(t)\|_{L^1}\le C_T\int_0^t|e_1-e_2|\), hence
\(\|\mathcal T e_1-\mathcal T e_2\|_{C^0([0,T])}\le C_TT\|e_1-e_2\|_{C^0([0,T])}\),
and the closed-loop environmental path is obtained by short-time contraction on
\(\mathcal K_T\) followed by continuation using the mass estimate.

\newpage

\begin{figure}[htbp]
    \centering
    \includegraphics[width=0.9\textwidth]{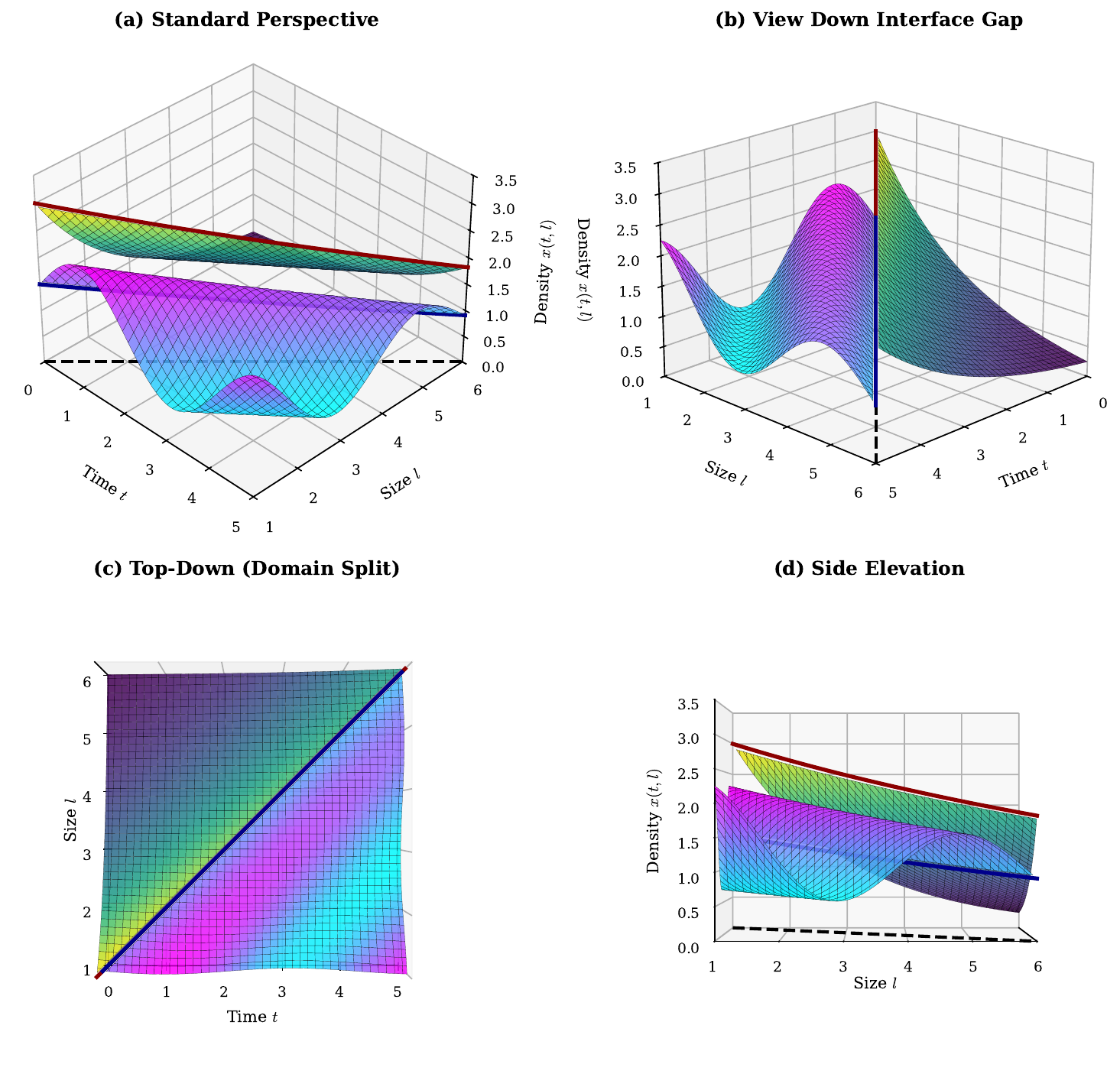}
    \caption{\label{fig:boundary_flux}}
\end{figure}

\begin{figure}[htbp]
    \centering
    \includegraphics[width=\textwidth]{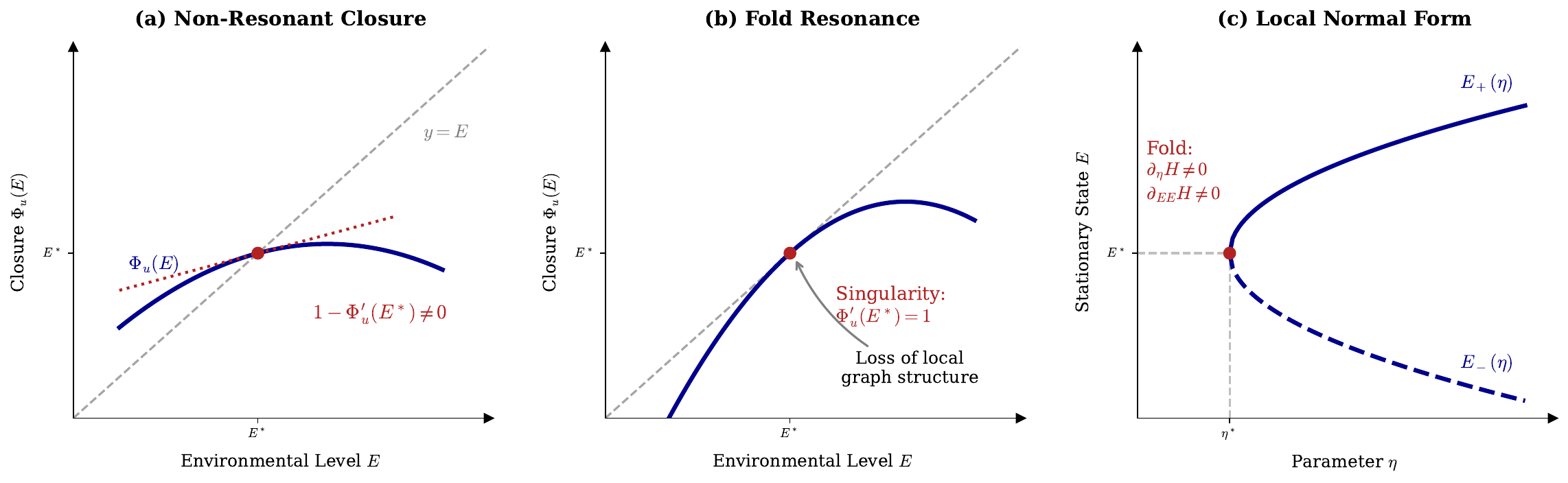}
    \caption{\label{fig:scalar_closure}}
\end{figure}

\begin{figure}[htbp]
    \centering
    \includegraphics[width=\textwidth]{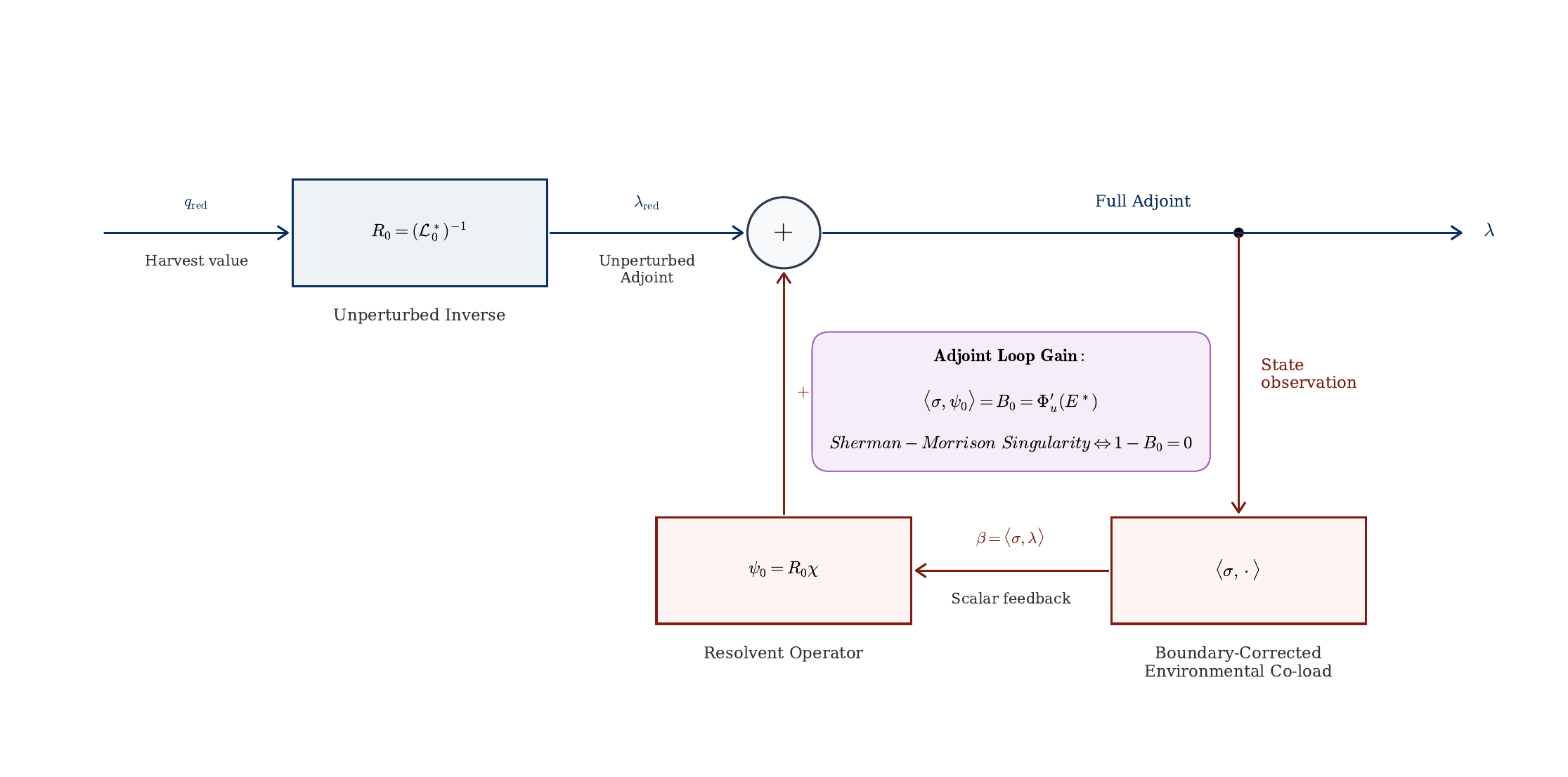}
    \caption{\label{fig:adjoint_loop}}
\end{figure}

\begin{figure}[htbp]
    \centering
    \includegraphics[width=0.85\textwidth]{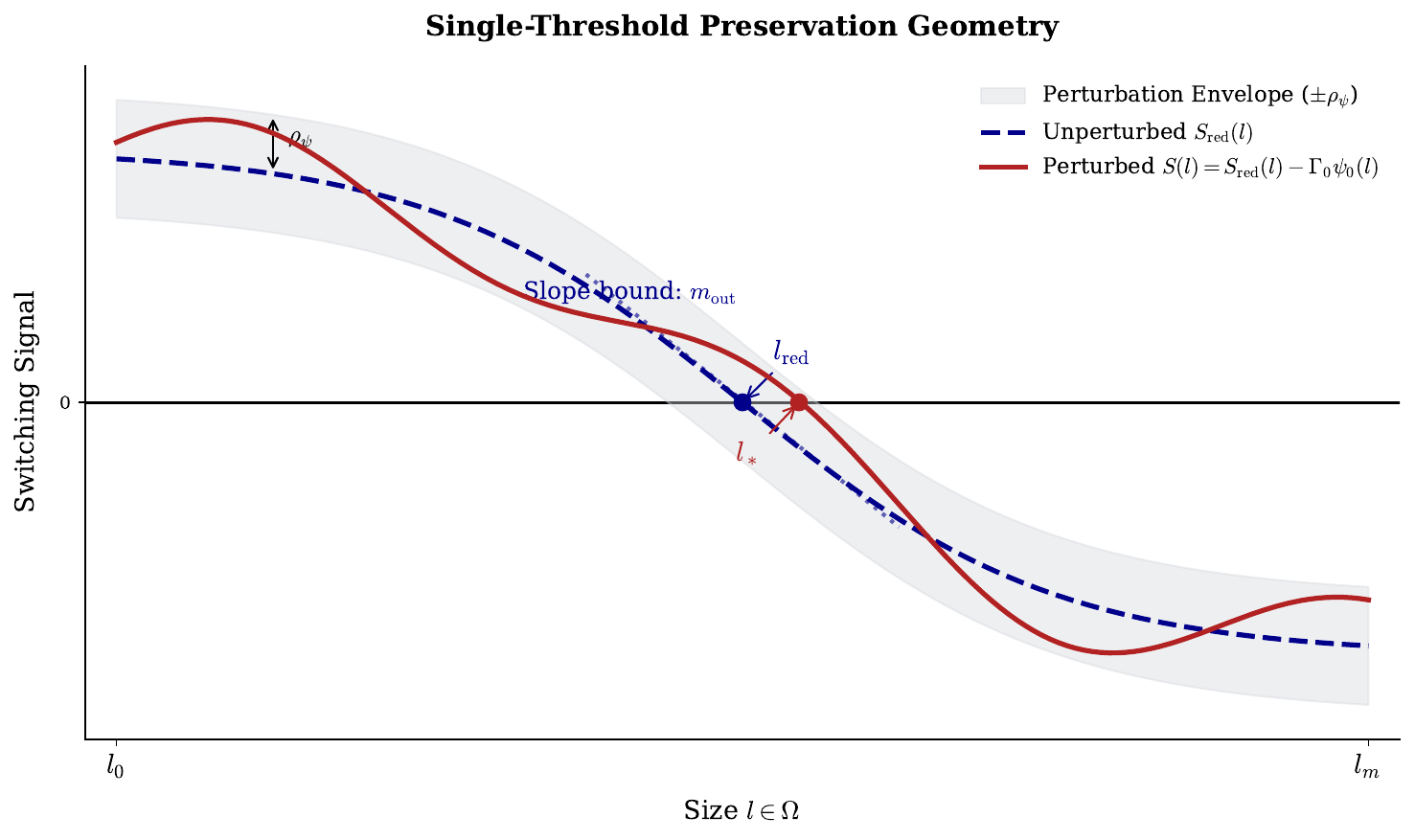}
    \caption{\label{fig:single_threshold}}
\end{figure}

\begin{figure}[htbp]
    \centering
    \includegraphics[width=0.85\textwidth]{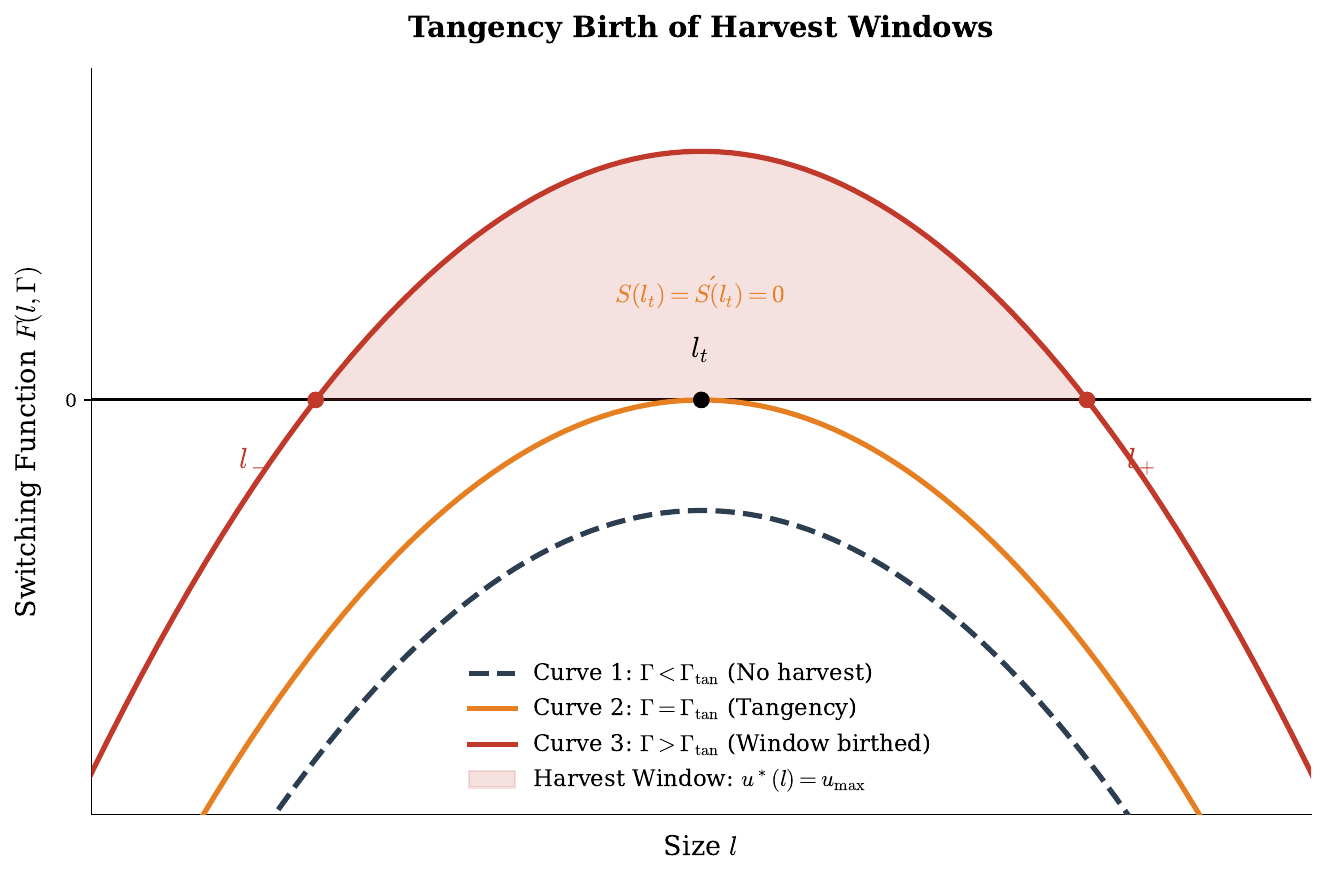}
    \caption{\label{fig:tangency_birth}}
\end{figure}
\clearpage

\newpage

\section*{Figure Captions}

\begin{enumerate}[label=(\roman*)]
    \item[Figure~\ref{fig:boundary_flux}] The non-standard boundary flux and characteristic flow in the $(t, l)$ plane. The subfigures demonstrate the delicate interface $\ell(t)$ created by the prescribed flux boundary condition $g(E(t), l_0)x(t, l_0) = p(t)$, which explicitly separates the initial-data-driven region from the boundary-data-driven region.
    \item[Figure~\ref{fig:scalar_closure}] Scalar closure and fold resonance. (a) A transversal, non-resonant intersection where $1 - \Phi_u'(E^*) \neq 0$. (b) A tangent intersection illustrating the fold resonance ($\Phi_u'(E^*) = 1$), which leads to the loss of local graph structure. (c) A parabolic fold branch extending into the $(\eta, E)$ plane, visually capturing the stationary states $E_\pm(\eta)$ and the resulting bifurcation.
    \item[Figure~\ref{fig:adjoint_loop}] The rank-one adjoint feedback loop framework. The primary feedforward path illustrates the unperturbed adjoint inversion $R_0 = (\mathcal L_0^*)^{-1}$ mapping the harvest value $q_{\rm red}$ to the unperturbed adjoint $\lambda_{\rm red}$. The closed-loop feedback path incorporates the boundary-corrected environmental co-load $\langle\sigma, \cdot\rangle$ and the resolvent operator $\psi_0$ returning through a summing junction, with the explicit loop gain labeled as $B_0 = \Phi_u'(E^*)$.
    \item[Figure~\ref{fig:single_threshold}] Single-threshold preservation and switching geometry over the size domain $\Omega = [l_0, l_m]$. The unperturbed switching function $S_{\rm red}(l)$ crosses the zero axis transversally at $l_{\rm red}$ (subject to the bound $m_{\rm out}$). The environmental feedback perturbation $\Gamma_0\psi_0(l)$ is constrained within an explicit envelope bound $\pm\rho_\psi$. Consequently, the perturbed switching function $S(l) = S_{\rm red}(l) - \Gamma_0\psi_0(l)$ uniquely crosses zero at the shifted root $l_*$, preserving the bang-bang structure.
    \item[Figure~\ref{fig:tangency_birth}] Tangency birth of harvest windows. The switching function $F(l, \Gamma)$ is plotted against the size $l$ for three distinct perturbation levels. For $\Gamma < \Gamma_{\rm tan}$ (Curve 1), the function remains strictly negative, yielding no harvesting. At the critical threshold $\Gamma = \Gamma_{\rm tan}$ (Curve 2), a double root emerges at $l_t$ satisfying the tangency condition $S(l_t) = S'(l_t) = 0$. For $\Gamma > \Gamma_{\rm tan}$ (Curve 3), the switching curve transversally crosses the zero axis twice, birthing a new harvest window $[l_-, l_+]$ where the optimal control is fully active ($u^*(l) = u_{\max}$).
\end{enumerate}
\end{document}